\newtheorem{fed}{Definition}[section]
\newtheorem{teo}[fed]{Theorem}
\newtheorem{cor}[fed]{Corollary}
\newtheorem*{teo*}{Theorem}
\newtheorem{pro}[fed]{Proposition}
\newtheorem{defi}[fed]{Definition}
\theoremstyle{definition}
\newtheorem{rem}[fed]{Remark}
\newtheorem*{teoD}{Douglas' theorem}
\def\bdem{\begin{proof}}
\def\edem{\renewcommand{\qed}{\hfill $\blacksquare$}
\end{proof}}
\newcommand{\pint}[1]{\displaystyle \left \langle #1 \right\rangle}
\newcommand{\makecircled}[2][\mathord]{#1{\mathpalette\make@circled{#2}}}
\newcommand{\make@circled}[2]{%
  \begingroup\m@th
  \vphantom{\biggercirc{#1}}%
  \ooalign{$#1\biggercirc{#1}$\cr\hidewidth$#1#2$\hidewidth\cr}%
  \endgroup
}
\newcommand{\biggercirc}[1]{%
  \vcenter{\hbox{\scalebox{1.4}{$\m@th#1\bigcirc$}}}%
}
\def\cH{\mathcal{H}}
\def\eme{\mathcal{M}}
\def\cH{\mathcal{H}}
\date{}
\begin{document}

\title{Proper splittings of Hilbert space operators}
\author{Guillermina Fongi $^{a}$, M. Celeste Gonzalez $^{b}$ $^{c}$ \footnote{The author was supported in part by  04/E127 UNCoMA}\\ 
\fontsize {9}{9} \selectfont$^a$ 
Centro Franco Argentino de Ciencias de la Informaci\'on y de Sistemas, CIFASIS-CONICET \\ \selectfont \fontsize {9}{9} \selectfont Ocampo y Esmeralda (2000)  Rosario, Argentina.
\\
\fontsize {9}{9} \selectfont$^b$ Instituto Argentino de Matem\'atica ``Alberto P. Calderón'', IAM-CONICET\\
\fontsize {9}{9} \selectfont Saavedra 15, Piso 3 (1083), Buenos Aires, Argentina.\\
\fontsize {9}{9} \selectfont $^c$ Instituto de Ciencias, Universidad Nacional de General Sarmiento.\\
\fontsize {9}{9} \selectfont{$^a$  gfongi@conicet.gov.ar, $^b$ celeste.gonzalez@conicet.gov.ar }}

\date{}
\maketitle

{\sl {AMS Classification:}} {47A05, 47B02, 06A06}

\fontsize {10}{10} \selectfont {{\sl {Keywords: \ }}{splitting of operators, operator orders, operator factorization}

\begin{abstract}
Proper splittings of operators are commonly used to study the convergence of iterative processes. In order to approximate solutions of operator equations, in this article we deal with proper splittings of closed range bounded linear  operators  defined on  Hilbert spaces. We study the convergence of  general proper splittings of operators in the infinite dimensional context. We also propose some particular splittings for special classes of operators and we study different criteria of convergence and comparison for them. In some cases, these criteria are given under hypothesis of operator order relations. In addition, we relate these results with the concept of the symmetric approximation of a frame in a Hilbert space. 
\end{abstract}

\maketitle
\section{Introduction}

Given an invertible matrix $T\in M^{n}(\mathbb{C})$, a splitting of $T$ is a partition $T=U-V$, where $U,V\in  M^{n}(\mathbb{C})$ with $U$  invertible.
The theory of regular splittings for invertible matrices, i.e.,  a splitting $T=U-V$ such that $U^{-1}$ has all its entries nonnegative started in a work due to Varga \cite{varga}, where  an iterative process to get the unique solution of the system $Tx=w$ was given.  To be more precise, given $T\in M^{n}(\mathbb{C})$ invertible and  $T=U-V$ a regular splitting of $T$,  Varga defined the iterative process
$$
x^{i+1}=U^{-1}V x^i+U^{-1}w,
$$
and he proved that this process converges for every initial $x^0$ if and only if the spectral radius of $U^{-1}V$, $\rho(U^{-1}V)<1$, is less than 1. Moreover, in such a case,  the process converges to $T^{-1}w$.

The idea of Varga was extended by Berman and Plemmons \cite{MR348984} in order to define an iterative process that allows to get the minimum norm least square solution of a system $Tx=w$, for $T\in M^{m\times n}(\mathbb{C})$. For this purpose, the concept of proper splitting of $T\in M^{m\times n}$ was introduced: a proper splitting of $T$ is a decomposition $T=U-V$ where $\mathcal{R}(U)=\mathcal{R}(T)$ and $\mathcal{N}(U)=\mathcal{N}(V)$. Then, Berman and Plemmos proved that given a proper splitting of $T\in M^{m\times n}(\mathbb{C})$ the iterative process

\begin{equation}\label{BermannPlemmos metodo}
x^{i+1}=U^{\dagger}V x^i+U^{\dagger}w,
\end{equation}
converges for every initial $x^0$ if and only if $\rho(U^\dagger V)<1$ and, in such a case, it converges to $T^{\dagger}w$, the minimum norm least square solution of $Tx=w$. 

In the literature  different kinds of splittings and proper splittings of a matrix can be found. Some of them are  regular splittings \cite{varga}, \cite{MR745083}, nonnegative splittings with first and second type \cite{MR1113154}, \cite{MR1930390}, weak regular splittings with first and second type \cite{MR1286436}, \cite{MR1628383}, \cite{MR1969059}, proper splittings \cite{MR3141710}, \cite{MR348984}, \cite{AG-splitting}, $P$-regular splittings \cite{MR0075677}, \cite{John}, $P$-proper splittings \cite{MR3671533}, weak nonnegative splittings of the first and the second type \cite{MR1695402}, among others. For each class of splitting there exist several results that guarantee the convergence of the iterative process associated. Also there exist results that compare the speed of convergence of the iterative method for different splittings of a matrix.

 Arias and Gonzalez \cite{AG-splitting} used the concept of proper splitting to extend the iterative method (\ref{BermannPlemmos metodo}) and so to define an iterative process that converges to a reduced solution of the solvable matrix equation $TX=W$. There, criteria of convergence and  comparison for proper splittings were given under hypothesis of positivity (according to L\"owner order) of certain matrices. Also in \cite{AG-splitting} it was proposed a particular proper splitting which is defined for every matrix in $M^{m\times n}(\mathbb{C})$; namely, the polar proper splitting. There, the convergence of the polar proper splitting  was studied and  it was shown that this particular splitting is "better" than others on certain classes of matrices. 

Along this article we will be interested in  extending the treatment of proper splittings of matrices to the context of Hilbert space operators in order to approximate solutions of an operator equation $TX=W$. 
 On one hand we generalize to the infinite dimensional context  results of \cite{AG-splitting} like a criterion of convergence for general proper splittings. Also, we study the polar proper splitting of a closed range operator and we characterize its convergence. We introduce particular proper splittings for the classes of split and Hermitian operators, we analyze their  convergence and we establish different criteria of comparison. In addition, throughout the article we focus in  comparing proper splittings of operators which are related under different factorizations. In particular some of the factorizations considered are given by means of the star order, the minus order, the sharp order. Also different proper splittings of operators in classes as the product of orthogonal projections, the product of an orthogonal projection by an Hermitian operator and the product of an orthogonal projection by a positive operator, are investigated. 

The reader is referred to \cite{2001DjorStani,2009Nacevska,LiuHuang} among others, for results on splittings,  iterative methods and criteria of convergence for splittings on infinite dimensional spaces. For example,  Nacevsca \cite{2009Nacevska} studied iterative methods for computing generalized inverses of linear operators by means of the method of splitting of operators.  Liu and Huang \cite{LiuHuang} studied the convergence of an iterative method to find solutions of a linear system $Tx=w$ by means of a proper splitting of the operator $GT$, where $G$ is an operator with range and nullspace prescribed.

The paper is organized as follows. Section 2 contains  notations and preliminaries results. In particular, we recall definitions of some operator orders and generalized inverses that will be used along the paper. Also, an  extension to the infinite dimensional case of a characterization of the L\"owner order for positive operators due to Baksalary, Liski and Trenkler \cite{MR1032688,AG-splitting} is given.   
In Section 3 the study of general  proper splittings given in \cite{AG-splitting} for the  finite dimensional case is now generalized  for Hilbert space operators. Here the main result is Theorem \ref{condiciones suficientes de convergencia}, where we provide sufficient conditions for the convergence of proper splittings under certain compacity hypothesis.
Section 4 is devoted to study the polar proper splitting. We characterize its convergence in Theorem \ref{convergencia del polar}. In Theorem \ref{polar sp orden estrella} the polar proper splittings of two operators related by the star order,  are compared. Also we show that the star order condition can not be replaced for a sharp order nor a minus order condition. In Section 5 we present two particular proper splittings for a split operator, i.e, an operator $T$ which satisfies the condition $\mathcal{R}(T)\dot+\mathcal{N}(T)=\mathcal{H}$. The splittings for such a $T$ are defined by means of operators that emerge from that decomposition of $\mathcal{H}$; namely,  the $Q$-proper splitting and the group proper splitting. For each one of them we give conditions that guarantee its convergence. We also provide some criteria of convergence for the splittings of two split operators related with the star and the sharp order. In Section 6 we study particular proper splittings associated to a  closed range Hermitian operator $T$. Here we consider the MP-proper splitting and the projection proper splitting, defined by means of the Moore-Penrose operator of $T$ and the orthogonal projection onto the range of $T$, respectively. We characterize the convergence of these proper splittings and we stablish a comparison criterion between them and the polar proper splitting. On the other hand, we apply the projection proper splitting to induce a proper splitting of an operator that can be factorized in terms of some Hermitian operator (Theorem \ref{inducido en PLh} and its corollaries). In subsection 6.1 we illustrate how we can apply the theory of splittings operators to get a symmetric approximation of a given frame.

Finally,  given $S,T\in\mathcal{L}(\mathcal{H})$, in Section 7 we study the induced splittings by $T$ on $S$ in the particular case that $S$ and $T$ are related by an invertible operator.

\section{Preliminaries}

Along this article $\mathcal{H}$ is a complex Hilbert space with inner product $\pint{.,.}$ and $\mathcal{L}(\mathcal{H})$ is the algebra of bounded linear operators defined from $\mathcal{H}$ to $\mathcal{H}$. The norm of $T\in\mathcal{L}(\mathcal{H})$ is $\|T\|=\sup\{\|Tx\|: \ \|x\|=1\}$.   By $\mathcal{K}$, $\mathcal{L}^h$, $\mathcal{L}^+$,  $\mathcal{U}$, $\mathcal{Q}$ and $\mathcal{P}$ we denote the classes of compact operators, Hermitian operators, positive operators, unitary operators,  oblique projections and orthogonal projections of $\mathcal{L}(\mathcal{H})$, respectively.  Given $T\in\mathcal{L}(\mathcal{H})$, $T^*$ denotes the adjoint  operator of $T$, $\mathcal{R}(T)$ denotes the range of $T$ and $\mathcal{N}(T)$ denotes the nullspace of $T$.  The direct sum between subspaces is denoted by $\dot+$.  Given closed subspaces $\mathcal{S}, \mathcal{T}\subseteq \mathcal{H}$ such that $\mathcal{S}\dot+\mathcal{T}=\mathcal{H}$, $Q_{\mathcal{S}//\mathcal{T}}$ indicates the oblique projection onto $\mathcal{S}$ along $\mathcal{T}$.  In particular, the orthogonal projection onto $\mathcal{S}$ is denoted by $P_{\mathcal{S}}$. We  will also use the notation $P_T$ to indicate the orthogonal projection onto $\mathcal{R}(T)$, for a closed range operator $T\in\mathcal{L}(\mathcal{\mathcal H})$. The spectrum and the spectral radius of  $T\in\mathcal{L}(\mathcal{H})$ are denoted by  $\sigma(T)$ and $\rho(T)$, respectively. It is well known that $\rho(T)\leq \|T\|$ and, if $T$ is a normal operator (i.e., $T^*T=TT^*$) then $\rho(T)= \|T\|$.

 The classical L\"owner order for Hermitian operators is denoted by $\leq$. Given $S,T\in\mathcal{L}^h$, it holds that $S\leq T$ if and only if $0\leq T-S$, or equivalently $0\leq \pint{(T-S)x,x}$ for all $x\in\mathcal{H}$.

The following result on range inclusion and operator factorization is due to Douglas \cite{MR0203464} and \cite{arias2008generalized}:

\begin{teoD}
	Consider $S,T \in \mathcal{L}(\mathcal{H})$. The following conditions are equivalent:
	\begin{enumerate}
		\item $\mathcal{R}(S)\subseteq \mathcal{R}(T)$;
		\item there exists a number $\lambda >0$ such that $SS^*\leq \lambda TT^*$;
		\item there exists $R\in\mathcal{L}(\mathcal{H})$ such that $TR=S$.
	\end{enumerate}
	Moreover, if any of the above conditions holds and $\eme\subseteq \mathcal{H}$ is a closed subspace such that $\eme\dot+\mathcal{N}(T)=\mathcal{H}$ then there exists a unique operator $X_\eme\in\mathcal{L}(\mathcal{H})$ such that $TX_\eme=S$ and $\mathcal{R}(X_\eme)\subseteq \eme$.  The operator  $X_\eme$ is called the reduced solution for $\eme$ of the equation $TX=S$ and also satisfies that $\mathcal{N}(X_\eme)=\mathcal{N}(S)$.
	
		  In the case that $\eme=\mathcal{N}(T)^\bot$, the unique solution $X_r\in\mathcal{L}(\mathcal{H})$ such that $TX_r=S$ and $\mathcal{R}(X_r)\subseteq \mathcal{N}(T)^\bot$ is called {\it{the Douglas reduced solution}} of the equation $TX=S$. In addition $X_r$ satisfies that $\mathcal{N}(X_r)=\mathcal{N}(S)$ and $\|X_r\|=inf \{\lambda: \  SS^*\leq \lambda TT^*\}$. 
\end{teoD}

Remember that a general inverse (or pseundoinverse) of $T\in\mathcal{L}(\mathcal H)$ is an operator which satisfies the equations $TXT=T$ and $XTX=X$. It is well known that a general inverse of $T\in\mathcal{L}(\mathcal{H})$ is bounded if and only if $T$ has closed range.  There exist different kinds of pseudoinverses of an operator according to the restriction imposed. In this article we deal with the Moore-Penrose inverse and with the group inverse of an operator. Next we collect some results about them.

Given $T\in\mathcal{L}(\mathcal{H})$ with closed range, $T^\dagger\in\mathcal{L}(\mathcal H)$ denotes the {\it{Moore Penrose inverse}} of $T$. The following two characterizations for the Moore-Penrose inverse of a closed range operator are known and can be found in  \cite{Gro}, \cite{arias2008generalized}: 
\begin{enumerate}
\item $T^ \dagger$ is the unique operator that satisfies simultaneously the following four equations:
$$
TXT=T,  \ \  \ \  XTX=X, \ \  \ \ TX=P_T, \ \ \ \  XT=P_{T^*};
$$

\item $T^\dagger$ is the Douglas reduced solution of the equation $TX=P_T$. 
\end{enumerate}

If $S,T\in\mathcal{L}(\mathcal{H})$ are invertible operators it is well known that $(ST)^{-1}=T^{-1}S^{-1}$. The extension of this identity to non invertible operators is known as ``the reverse order law'' for the Moore-Penrose inverse; i.e., $(ST)^\dagger =T^\dagger S^\dagger$. However this equality does not hold in general.  A  classical result due to Greville \cite{greville1966note} provides a  characterization of the reverse order law for matrices. Next, we state its extension to bounded linear operators in Hilbert spaces proved in \cite{MR317088, MR651705}. 

\begin{pro}\label{dagger de AB con rg iguales} Consider $S,T\in\mathcal{L}(\mathcal H)$ with closed ranges such that  $ST$ has closed range.  Then  $(ST)^\dagger=T^\dagger S^\dagger$ if and only if $\mathcal{R}(S^*ST)\subseteq \mathcal{R}(T)$ and $\mathcal{R}(TT^*S^*)\subseteq \mathcal{R}(S^*)$.
\end{pro}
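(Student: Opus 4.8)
The plan is to characterize the reverse order law $(ST)^\dagger = T^\dagger S^\dagger$ by directly checking the four Moore-Penrose equations for the candidate operator $X = T^\dagger S^\dagger$, and then translating the equations that may fail into the two range-inclusion conditions. First I would set $X = T^\dagger S^\dagger$ and observe that since all of $S$, $T$, and $ST$ have closed range, $X$ is a bounded operator and $(ST)^\dagger$ exists. The key observation is that the ``inner'' Penrose equations $(ST)X(ST) = ST$ and $X(ST)X = X$, together with one of the projection equations, tend to hold automatically once we exploit the identities $TT^\dagger = P_T$, $T^\dagger T = P_{T^*}$, $SS^\dagger = P_S$, and $S^\dagger S = P_{S^*}$; the real content lies in forcing $(ST)X$ and $X(ST)$ to be the \emph{orthogonal} projections $P_{ST}$ and $P_{(ST)^*}$ rather than merely idempotents.

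Concretely, I would compute $(ST)X = ST T^\dagger S^\dagger = S P_{T^*} S^\dagger$ and $X(ST) = T^\dagger S^\dagger S T = T^\dagger P_S T$, using $TT^\dagger = P_T$ on the range side. By the uniqueness clause in the characterization of $T^\dagger S^\dagger$ as an actual Moore-Penrose inverse, the equality $X = (ST)^\dagger$ holds if and only if $ST\,X\,ST = ST$, $X\,ST\,X = X$, and the two products $(ST)X$ and $X(ST)$ are self-adjoint. So I would reduce the problem to showing that $(ST)X = S P_{T^*} S^\dagger$ is self-adjoint (equivalently equals $P_{ST}$) precisely when one of the two range inclusions holds, and symmetrically that $X(ST)$ is self-adjoint precisely when the other holds. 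The plan is to unwind the self-adjointness of $S P_{T^*} S^\dagger$ using $P_{T^*} = T^\dagger T$ and the fact that a projection-type product is self-adjoint exactly when the relevant ranges are mutually compatible; here is where the conditions $\mathcal{R}(S^*ST) \subseteq \mathcal{R}(T)$ and $\mathcal{R}(TT^*S^*) \subseteq \mathcal{R}(S^*)$ should emerge, after applying Douglas' theorem to rewrite each inclusion as a factorization that makes the corresponding product symmetric.

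For the forward direction I would assume $(ST)^\dagger = T^\dagger S^\dagger$ and extract the range inclusions by applying the four Penrose equations for $ST$ to the explicit form $T^\dagger S^\dagger$, manipulating $(ST)(ST)^\dagger = P_{ST}$ and $(ST)^\dagger(ST) = P_{(ST)^*}$ into statements about ranges; taking adjoints and composing with $S$, $T$, $S^*$, $T^*$ as appropriate should yield that $S^*ST$ factors through $T$ and that $TT^*S^*$ factors through $S^*$, giving the inclusions via Douglas' theorem. For the converse, I would feed the two inclusions back through Douglas to obtain operators $R_1, R_2$ with $S^*ST = TR_1$ and $TT^*S^* = S^* R_2$, and use these to verify that $S P_{T^*} S^\dagger$ and $T^\dagger P_S T$ are genuinely self-adjoint, completing the four-equation check.

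The hard part will be the bookkeeping in matching each range inclusion to the correct Penrose equation and keeping the two sides symmetric: the symmetry between $S$ and $T$ is broken (it is $S^*ST$ versus $TT^*S^*$, not a clean dual pair), so I expect the main obstacle to be correctly identifying which inclusion controls $(ST)X$ and which controls $X(ST)$, and handling the passage between a range inclusion, its Douglas factorization, and the self-adjointness of the associated idempotent without circularity. A secondary technical point is ensuring that closedness of $\mathcal{R}(ST)$ is genuinely used so that $(ST)^\dagger$ is bounded and the uniqueness characterization of the Moore-Penrose inverse legitimately applies.
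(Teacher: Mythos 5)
First, a point of reference: the paper does not prove this proposition at all --- it is stated as the Hilbert-space extension of Greville's theorem and attributed to Bouldin and Izumino, so there is no in-paper argument to compare with and your proposal has to stand on its own.

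The frame of testing $X=T^{\dagger}S^{\dagger}$ against the four Penrose equations is the right starting point, but your central reduction is incorrect. You assert that the two ``inner'' equations $(ST)X(ST)=ST$ and $X(ST)X=X$ come essentially for free and that the real content is the self-adjointness of $(ST)X$ and $X(ST)$. Neither half of this survives scrutiny. Computing correctly (note $TT^{\dagger}=P_{T}$ and $S^{\dagger}S=P_{S^*}$, so your $SP_{T^*}S^{\dagger}$ and $T^{\dagger}P_{S}T$ are slips), one gets $(ST)X=SP_{T}S^{\dagger}$, $X(ST)=T^{\dagger}P_{S^*}T$ and $(ST)X(ST)=SP_{T}P_{S^*}T$, which equals $ST$ only when $SP_{\mathcal{N}(T^*)}P_{S^*}T=0$ --- not automatic. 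Worse, self-adjointness of both products is strictly weaker than the conclusion: take $S=P$ and $T=Q$ two non-commuting orthogonal projections on $\mathbb{C}^{2}$, say $P=\left(\begin{smallmatrix}1&0\\0&0\end{smallmatrix}\right)$ and $Q$ the projection onto the span of $(1,1)$. Then $(ST)X=PQP$ and $X(ST)=QPQ$ are \emph{always} self-adjoint, yet $(PQ)(QP)(PQ)=\frac{1}{2}PQ\neq PQ$, so $(PQ)^{\dagger}\neq QP$; consistently, $\mathcal{R}(S^*ST)=\mathcal{R}(PQ)=\mathrm{span}\{e_1\}\not\subseteq\mathcal{R}(Q)$. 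Hence the equivalence cannot be routed solely through the two self-adjointness conditions: the first two Penrose equations carry essential content and must themselves be tied to the range hypotheses.

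The second gap is that the bridge between the range inclusions and the equations is left at the level of ``should emerge.'' Since $\mathcal{R}(T)$ and $\mathcal{R}(S^*)$ are closed, the hypotheses are equivalent to the operator identities $P_{T}S^*ST=S^*ST$ and $P_{S^*}TT^*S^*=TT^*S^*$, and it is these identities (rather than Douglas factorizations $S^*ST=TR_1$, which add nothing beyond the inclusion itself) that one must manipulate to verify all four equations for $T^{\dagger}S^{\dagger}$, and conversely to extract from the four equations. That translation is exactly where the difficulty of the Greville--Bouldin--Izumino theorem lives, and the proposal does not engage with it.
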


\begin{cor}\label{ROL-autoadjuntos}
Let $S,T\in\mathcal{L}^h$ be closed range operators such that   $ST=TS$ has closed range.  Then  $(ST)^\dagger=T^\dagger S^\dagger$.
\end{cor}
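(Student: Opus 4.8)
The plan is to reduce the claim to Proposition \ref{dagger de AB con rg iguales} by verifying its two range inclusion hypotheses in the present Hermitian, commuting setting. Since $S,T\in\mathcal{L}^h$ we have $S^*=S$ and $T^*=T$, so the conditions $\mathcal{R}(S^*ST)\subseteq\mathcal{R}(T)$ and $\mathcal{R}(TT^*S^*)\subseteq\mathcal{R}(S^*)$ become $\mathcal{R}(S^2T)\subseteq\mathcal{R}(T)$ and $\mathcal{R}(T^2S)\subseteq\mathcal{R}(S)$, respectively. The whole proof then rests on exploiting the hypothesis $ST=TS$ to rewrite these two products so that the ``outer'' operator carries the range.

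For the first inclusion I would compute, using commutativity repeatedly,
$$
S^2T=S(ST)=S(TS)=(ST)S=(TS)S=TS^2,
$$
so that $\mathcal{R}(S^2T)=\mathcal{R}(TS^2)\subseteq\mathcal{R}(T)$, since $\mathcal{R}(TA)\subseteq\mathcal{R}(T)$ for any $A\in\mathcal{L}(\mathcal{H})$. Symmetrically, for the second inclusion,
$$
T^2S=T(TS)=T(ST)=(TS)T=(ST)T=ST^2,
$$
whence $\mathcal{R}(T^2S)=\mathcal{R}(ST^2)\subseteq\mathcal{R}(S)$. Both hypotheses of Proposition \ref{dagger de AB con rg iguales} are thus satisfied, so the conclusion $(ST)^\dagger=T^\dagger S^\dagger$ follows immediately.

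I would also check at the outset that the standing hypotheses of Proposition \ref{dagger de AB con rg iguales} are met: $S$ and $T$ have closed range by assumption, and $ST$ is assumed to have closed range, so the proposition applies verbatim. There is essentially no obstacle here; the only point requiring a moment's care is the elementary observation that $ST=TS$ forces the higher products $S^2T$ and $T^2S$ to commute through as well, which is exactly what the two displayed chains of equalities record. Once the inclusions are in place, invoking the proposition finishes the argument.
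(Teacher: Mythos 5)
Your proof is correct and follows the same route as the paper, which simply cites Proposition \ref{dagger de AB con rg iguales} as immediate; you have merely filled in the (correct) verification that $S^*ST=S^2T=TS^2$ and $TT^*S^*=T^2S=ST^2$, which yields the two required range inclusions.
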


\begin{proof}
It is straightforward from Proposition \ref{dagger de AB con rg iguales}. 
\end{proof}

The following result characterizes the antitonicity property for the Moore-Penrose operator on the set $\mathcal{L}^+$.  Its proof  can be found in \cite{fongi2023moore}.

\begin{teo}\label{lowner-order}
Consider $S,T\in \mathcal{L}^+$  closed range operators. Then any two of the following conditions imply the third condition:
\begin{enumerate}
\item $S\leq T$;
\item $T^\dagger\leq S^\dagger$;
\item $\mathcal{R}(S)\cap\mathcal{N}(T)=\mathcal{R}(T)\cap\mathcal{N}(S)=\{0\}$.
\end{enumerate}
\end{teo}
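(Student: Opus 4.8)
The plan is to reduce every implication to the classical antitonicity of inversion on positive invertible operators, after first controlling ranges and nullspaces. I will freely use that for $A\in\mathcal{L}^+$ with closed range one has $A^\dagger\in\mathcal{L}^+$ with $\mathcal{R}(A^\dagger)=\mathcal{R}(A)$, $\mathcal{N}(A^\dagger)=\mathcal{N}(A)$, $AA^\dagger=A^\dagger A=P_A$, and $\mathcal{R}(A^{1/2})=\mathcal{R}(A)$ (the latter because $A$ is bounded below on $\mathcal{N}(A)^\bot$, hence so is $A^{1/2}$, so $A^{1/2}$ again has closed range). The first step is a range lemma: if $A,B\in\mathcal{L}^+$ have closed range and $A\le B$, then $\mathcal{R}(A)\subseteq\mathcal{R}(B)$ and $\mathcal{N}(B)\subseteq\mathcal{N}(A)$. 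Indeed $A\le B$ reads $\|A^{1/2}x\|^2\le\|B^{1/2}x\|^2$ for all $x$, so Douglas' theorem applied to the Hermitian square roots gives $\mathcal{R}(A^{1/2})\subseteq\mathcal{R}(B^{1/2})$, i.e. $\mathcal{R}(A)\subseteq\mathcal{R}(B)$; the nullspace inclusion follows by taking orthogonal complements.

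With this lemma, $(1)\wedge(2)\Rightarrow(3)$ is immediate. From $S\le T$ I get $\mathcal{R}(S)\subseteq\mathcal{R}(T)$ and $\mathcal{N}(T)\subseteq\mathcal{N}(S)$, while applying the lemma to $T^\dagger\le S^\dagger$ and using $\mathcal{R}(T^\dagger)=\mathcal{R}(T)$, $\mathcal{R}(S^\dagger)=\mathcal{R}(S)$, $\mathcal{N}(S^\dagger)=\mathcal{N}(S)$, $\mathcal{N}(T^\dagger)=\mathcal{N}(T)$ gives the reverse inclusions. Hence $\mathcal{R}(S)=\mathcal{R}(T)$ and $\mathcal{N}(S)=\mathcal{N}(T)$, so $\mathcal{R}(S)\cap\mathcal{N}(T)=\mathcal{R}(T)\cap\mathcal{N}(T)=\{0\}$ and symmetrically, which is exactly $(3)$.

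The heart of the argument is $(1)\wedge(3)\Rightarrow(2)$. The crucial claim is that $(1)$ and $(3)$ force $\mathcal{R}(S)=\mathcal{R}(T)$: by the lemma $\mathcal{R}(S)\subseteq\mathcal{R}(T)$, and if this inclusion were proper then the orthogonal complement of the closed subspace $\mathcal{R}(S)$ inside $\mathcal{R}(T)$ would be a nonzero subspace contained in $\mathcal{R}(T)\cap\mathcal{R}(S)^\bot=\mathcal{R}(T)\cap\mathcal{N}(S)$, contradicting $(3)$. Writing $\mathcal{R}:=\mathcal{R}(S)=\mathcal{R}(T)$ (so $\mathcal{N}(S)=\mathcal{N}(T)=\mathcal{R}^\bot$), both $S$ and $T$ leave $\mathcal{R}$ invariant and restrict there to positive invertible operators $S_0,T_0$ with $S_0\le T_0$; moreover $S^\dagger$ and $T^\dagger$ vanish on $\mathcal{R}^\bot$ and restrict to $S_0^{-1}$, $T_0^{-1}$ on $\mathcal{R}$. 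Decomposing test vectors along $\mathcal{R}\oplus\mathcal{R}^\bot$ shows that $T^\dagger\le S^\dagger$ on $\mathcal{H}$ is equivalent to $T_0^{-1}\le S_0^{-1}$ on $\mathcal{R}$, and the latter is the classical antitonicity of inversion: conjugating $S_0\le T_0$ by $T_0^{-1/2}$ gives $T_0^{-1/2}S_0T_0^{-1/2}\le I$, inverting yields $T_0^{1/2}S_0^{-1}T_0^{1/2}\ge I$, and conjugating back gives $S_0^{-1}\ge T_0^{-1}$. This proves $(2)$.

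Finally, $(2)\wedge(3)\Rightarrow(1)$ requires no new work: applying the already proven implication $(1)\wedge(3)\Rightarrow(2)$ to the pair $(T^\dagger,S^\dagger)$ in place of $(S,T)$ returns it, since $(S^\dagger)^\dagger=S$, the Moore--Penrose inverse preserves ranges and nullspaces, and $(3)$ is invariant under this substitution. I expect the main obstacle to be the infinite-dimensional bookkeeping with closed ranges --- verifying that $A^{1/2}$ again has closed range so that the Douglas range inclusions become genuine inclusions of closed subspaces --- together with the pivotal range-equality step, which is precisely what collapses the general problem to the positive invertible case where operator antitonicity is available.
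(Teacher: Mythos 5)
Your proof is correct. Note that the paper itself does not prove this theorem: it is quoted with a citation to \cite{fongi2023moore}, so there is no in-paper argument to compare against. Your self-contained route is sound: the range lemma via Douglas' theorem applied to the square roots is valid (using that $\mathcal{R}(A^{1/2})=\mathcal{R}(A)$ for positive closed-range $A$), the pivotal step that $(1)\wedge(3)$ forces $\mathcal{R}(S)=\mathcal{R}(T)$ is exactly right (the gap $\mathcal{R}(T)\ominus\mathcal{R}(S)$ would sit inside $\mathcal{R}(T)\cap\mathcal{N}(S)$), the block decomposition along $\mathcal{R}\oplus\mathcal{R}^{\bot}$ correctly reduces $T^{\dagger}\leq S^{\dagger}$ to antitonicity of inversion for the invertible restrictions, and the duality trick $(S,T)\mapsto(T^{\dagger},S^{\dagger})$ legitimately recycles that implication to get $(2)\wedge(3)\Rightarrow(1)$, since $(S^{\dagger})^{\dagger}=S$ and condition $(3)$ is invariant under the swap. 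The only cosmetic caveat is that the closed-range bookkeeping for $A^{1/2}$ is stated somewhat informally; a one-line appeal to the spectral gap $\sigma(A)\subseteq\{0\}\cup[c,\|A\|]$ would make it airtight.
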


As a consequence of Douglas' theorem we can characterize the L\"owner order between positive operators by means of the spectral radius of certain product of operators and a range inclusion condition. The next result is the extension to the infinite dimensional case of  a result due to Baksalary, Liski and Trenkler \cite{MR1032688} and \cite{AG-splitting}. 
\begin{pro}\label{BLT}
	Consider $S,T\in \mathcal L^+$ such that $\mathcal R(T)$ is closed. Then, $S\leq T$ if and only if $\rho(S^{1/2}{T^\dagger S^{1/2}})\leq 1$ and $\mathcal{R}(S^{1/2})\subseteq \mathcal{R}(T)$.
\end{pro}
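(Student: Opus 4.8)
The plan is to reduce the statement to Douglas' theorem together with a spectral-radius computation, exploiting that $S\leq T$ is equivalent to the factorization $S=T^{1/2}(T^{1/2})^*$-type inequality through the operator $T^\dagger$. First I would establish the range condition $\mathcal R(S^{1/2})\subseteq\mathcal R(T)$ under the assumption $S\leq T$: since $0\leq S\leq T$ gives $SS^*=S^2\leq \|S\|\,S\leq \|S\|\,T\leq \|S\|\,TT^*/\lambda$-type bounds, more directly $\langle Sx,x\rangle\leq\langle Tx,x\rangle$ yields $\|S^{1/2}x\|^2\leq\|T^{1/2}x\|^2$, so $\mathcal R(S^{1/2})\subseteq\mathcal R(T^{1/2})=\mathcal R(T)$ by Douglas' theorem (condition (2) $\Rightarrow$ (1), applied to $S^{1/2}$ and $T^{1/2}$, using that $\mathcal R(T^{1/2})=\mathcal R(T)$ for positive closed-range $T$). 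This same range condition is assumed on the converse side, so it will be available in both directions.

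Granting $\mathcal R(S^{1/2})\subseteq\mathcal R(T)$, I would let $R$ be the Douglas reduced solution of $TX=S^{1/2}$, so $TR=S^{1/2}$ and $\mathcal R(R)\subseteq\mathcal N(T)^\perp=\mathcal R(T)$. The key algebraic identity I expect to need is $T^\dagger S^{1/2}=T^\dagger T R=P_{T^*}R=P_{\mathcal R(T)}R=R$, using $\mathcal R(R)\subseteq\mathcal R(T)$ and the Moore–Penrose identity $T^\dagger T=P_{T^*}=P_{\mathcal R(T^*)}=P_{\mathcal R(T)}$ (valid since $T$ is positive). Then the operator $S^{1/2}T^\dagger S^{1/2}=S^{1/2}R$ is positive, because $S^{1/2}R=S^{1/2}(T^\dagger S^{1/2})$ and one checks its self-adjointness via $R^*S^{1/2}=(S^{1/2})^*(T^\dagger)^*S^{1/2}$ together with the self-adjointness of $T^\dagger$ (as $T$ is Hermitian, $T^\dagger$ is Hermitian). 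Being positive, its spectral radius equals its norm, and the inequality $\rho(S^{1/2}T^\dagger S^{1/2})\leq 1$ becomes equivalent to $S^{1/2}T^\dagger S^{1/2}\leq I$, i.e. $\langle T^\dagger S^{1/2}x,S^{1/2}x\rangle\leq\langle x,x\rangle$ for all $x$.

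The core of the proof is then translating $S\leq T$ into $S^{1/2}T^\dagger S^{1/2}\leq P_{\mathcal R(S^{1/2})}$ (or $\leq I$ on the relevant subspace). In one direction, for $y=S^{1/2}x\in\mathcal R(S^{1/2})\subseteq\mathcal R(T)$ I would write $y=Tz$ with $z\in\mathcal R(T)$, so $T^\dagger y=z$ and $\langle T^\dagger y,y\rangle=\langle z,Tz\rangle$; comparing $\langle S z,z\rangle\leq\langle Tz,z\rangle$ from $S\leq T$ against $\langle Sz,z\rangle=\|S^{1/2}z\|^2$ should close the estimate once I relate $S^{1/2}z$ back to $y$. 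The main obstacle I anticipate is handling this on $\mathcal N(T)$ and the non-closed-range subtleties: the reduced solution $R$ lives on $\mathcal R(T)$, so I must check the inequality genuinely only on $\mathcal R(S^{1/2})\subseteq\mathcal R(T)$ and verify that the spectral radius of $S^{1/2}T^\dagger S^{1/2}$ is unaffected by its action on $\mathcal N(T^\dagger)=\mathcal R(T)^\perp$, where it vanishes. For the converse, assuming both $\rho\leq 1$ and the range inclusion, I would reverse these steps: the range inclusion produces $R$ with $T^\dagger S^{1/2}=R$, the spectral bound gives $\|S^{1/2}z\|^2\leq\langle Tz,z\rangle$ on $\mathcal R(T)$, and I extend to all of $\mathcal H$ using that $S^{1/2}$ kills $\mathcal N(T)$ (which follows from $\mathcal R(S^{1/2})\subseteq\mathcal R(T)$, giving $\mathcal N(T)=\mathcal R(T)^\perp\subseteq\mathcal N(S^{1/2})$), thereby recovering $\langle Sx,x\rangle\leq\langle Tx,x\rangle$ for every $x\in\mathcal H$ and hence $S\leq T$.
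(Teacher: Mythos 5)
Your proposal is correct, but it takes a more hands-on route than the paper. The paper's proof is essentially a two-line application of the quantitative part of Douglas' theorem to the equation $T^{1/2}X=S^{1/2}$: the reduced solution is $(T^{1/2})^\dagger S^{1/2}$, its norm equals $\inf\{\lambda: S\leq\lambda T\}$, and since $S^{1/2}T^\dagger S^{1/2}=\bigl((T^{1/2})^\dagger S^{1/2}\bigr)^*\bigl((T^{1/2})^\dagger S^{1/2}\bigr)$ is positive, $\rho(S^{1/2}T^\dagger S^{1/2})=\|(T^{1/2})^\dagger S^{1/2}\|^2$; both implications then read off immediately. You instead use only the qualitative (factorization) part of Douglas -- producing $R$ with $TR=S^{1/2}$ and $T^\dagger S^{1/2}=R$ -- and re-derive the quantitative content by direct quadratic-form estimates; this is more elementary and self-contained, at the cost of length. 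The steps you leave as ``should close'' do close: in the forward direction, with $S^{1/2}x=Tz$, $z\in\mathcal R(T)$, one gets $\langle S^{1/2}T^\dagger S^{1/2}x,x\rangle=\langle Tz,z\rangle=\langle x,S^{1/2}z\rangle\leq\|x\|\,\langle Sz,z\rangle^{1/2}\leq\|x\|\,\langle Tz,z\rangle^{1/2}$, whence $\langle Tz,z\rangle\leq\|x\|^2$; in the converse, from $S^{1/2}=\bigl(T^{1/2}R\bigr)^*T^{1/2}$ with $\|T^{1/2}R\|^2=\|S^{1/2}T^\dagger S^{1/2}\|\leq1$ one gets $\|S^{1/2}z\|\leq\|T^{1/2}z\|$ for all $z$ (so the separate reduction to $\mathcal R(T)$ via $\mathcal N(T)\subseteq\mathcal N(S^{1/2})$ is not even needed). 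Two minor blemishes: your first attempt at the range inclusion ($S^2\leq\|S\|T\leq\|S\|TT^*/\lambda$) is not a valid chain, but the ``more directly'' argument via $\|S^{1/2}x\|^2\leq\|T^{1/2}x\|^2$ that you substitute for it is exactly the paper's; and note that your $R=T^\dagger S^{1/2}$ is not the paper's auxiliary operator $(T^{1/2})^\dagger S^{1/2}$ -- both work, but the latter is the one whose norm Douglas' theorem controls directly.
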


\begin{proof}
	Let $S,T\in\mathcal L^+$. If $S\leq T$ then, by Douglas' theorem, $\mathcal{R}(S^{1/2})\subseteq \mathcal{R}(T^{1/2})=\mathcal R(T)$ and  $\|(T^{1/2})^\dagger S^{1/2}\|=\inf\{\lambda: S\leq \lambda T\}$.  Therefore, $\|(T^{1/2})^\dagger S^{1/2}\|\leq 1$ and $\rho(S^{1/2}T^\dagger S^{1/2})=\| S^{1/2}T^\dagger S^{1/2}\|=\|(T^{1/2})^\dagger S^{1/2}\|^2\leq 1$.  
	
	Conversely, suppose that  $\rho(S^{1/2}T^\dagger S^{1/2})\leq 1$ and $\mathcal{R}(S^{1/2})\subseteq \mathcal{R}(T)=\mathcal{R}(T^{1/2})$. Then $\|(T^{1/2})^\dagger S^{1/2}\|^2 = \|S^{1/2}T^\dagger S^{1/2}\|=\rho(S^{1/2}T^\dagger S^{1/2})\leq 1$. Hence, applying again Douglas' theorem we get that,  $\inf\{\lambda: S\leq \lambda T\}=\|(T^{1/2})^\dagger S^{1/2}\|\leq 1$ and $S\leq T$.
	\end{proof}

On the other hand, the group inverse  of an operator is defined for the class of split operators; $T\in \mathcal L(\mathcal{H})$ is a {\it{split operator}} if $\mathcal{R}(T)\dotplus \mathcal{N}(T)=\mathcal{H}$. If $T\in\mathcal{L}(\mathcal{H})$ is a split operator then there exists a unique  $T^\sharp \in \mathcal L(\mathcal{H})$ such that the following equations are simultaneously satisfied:
$$
TXT=T, \ \ \ \ XTX=X, \ \ \ \ \ TX=XT.
$$
The operator $T^\sharp$ is called the {\it{group inverse}} of $T$. In \cite{Robert} necessary and sufficient conditions for the existence of this kind of pseudoinverse are shown:
\begin{teo}\label{group inverse conditions}
	Consider  $T\in \mathcal{L}(\mathcal{H})$, then the following statements are equivalent:
	\begin{enumerate}
		\item  $\mathcal H=\mathcal{R}(T)\dotplus \mathcal{N}(T)$;
		\item $\mathcal{R}(T^2)=\mathcal{R}(T)$ and  $\mathcal{N}(T^2)=\mathcal{N}(T)$;
		\item $T^\sharp $ exists.
	\end{enumerate}
\end{teo}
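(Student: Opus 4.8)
The plan is to prove both equivalences by exploiting the inclusions $\mathcal{R}(T^2)\subseteq\mathcal{R}(T)$ and $\mathcal{N}(T)\subseteq\mathcal{N}(T^2)$, which always hold, so that in each case only one inclusion needs to be established. I would first dispose of the purely geometric equivalence $(1)\Leftrightarrow(2)$ and then concentrate on the analytic equivalence $(1)\Leftrightarrow(3)$, which carries the real content.

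For $(1)\Rightarrow(2)$ I would assume $\mathcal{H}=\mathcal{R}(T)\dotplus\mathcal{N}(T)$. To obtain $\mathcal{R}(T)\subseteq\mathcal{R}(T^2)$, take $y=Tx$ and split $x=u+v$ with $u\in\mathcal{R}(T)$ and $v\in\mathcal{N}(T)$; then $y=Tu$ and, writing $u=Tw$, we get $y=T^2w\in\mathcal{R}(T^2)$. To obtain $\mathcal{N}(T^2)\subseteq\mathcal{N}(T)$, note that $x\in\mathcal{N}(T^2)$ forces $Tx\in\mathcal{R}(T)\cap\mathcal{N}(T)=\{0\}$. Conversely, for $(2)\Rightarrow(1)$, the disjointness $\mathcal{R}(T)\cap\mathcal{N}(T)=\{0\}$ follows because $y=Tx$ with $Ty=0$ gives $x\in\mathcal{N}(T^2)=\mathcal{N}(T)$, hence $y=Tx=0$; and the summands fill $\mathcal{H}$ because for arbitrary $x$ the element $Tx\in\mathcal{R}(T)=\mathcal{R}(T^2)$ can be written $Tx=T^2w$, so that $x=Tw+(x-Tw)$ decomposes $x$ into a range part and a nullspace part.

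The core is $(1)\Leftrightarrow(3)$. For $(1)\Rightarrow(3)$ I would read $\dotplus$ as a topological direct sum, so that $\mathcal{R}(T)$ is closed and the oblique projection $Q=Q_{\mathcal{R}(T)//\mathcal{N}(T)}$ is bounded. Using $(1)\Rightarrow(2)$, the restriction $T|_{\mathcal{R}(T)}$ is a bijection of $\mathcal{R}(T)$ onto itself (injective by disjointness, surjective by $\mathcal{R}(T^2)=\mathcal{R}(T)$); since $\mathcal{R}(T)$ is a Banach space, the bounded inverse theorem furnishes a bounded inverse $B$. I would then define $T^\sharp=BQ$ and verify the three defining identities, the key computation being $TT^\sharp=T^\sharp T=Q$ (both sides act as the identity on $\mathcal{R}(T)$ and annihilate $\mathcal{N}(T)$); from this, $TT^\sharp T=QT=T$, $T^\sharp TT^\sharp=QT^\sharp=T^\sharp$, and the commutativity follow at once. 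For $(3)\Rightarrow(1)$ I would set $E=TT^\sharp=T^\sharp T$, check $E^2=E$ using $TT^\sharp T=T$, and identify $\mathcal{R}(E)=\mathcal{R}(T)$ and $\mathcal{N}(E)=\mathcal{N}(T)$; as $E$ is a bounded idempotent, its range and kernel give the topological direct sum $\mathcal{H}=\mathcal{R}(T)\dotplus\mathcal{N}(T)$.

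The main obstacle I anticipate is the \emph{boundedness} of $T^\sharp$ in $(1)\Rightarrow(3)$: the group-inverse equations themselves are routine algebra, but producing a bounded group inverse requires knowing that $\mathcal{R}(T)$ is closed and that $T|_{\mathcal{R}(T)}$ is boundedly invertible. This is precisely where the topological (rather than merely algebraic) reading of the direct sum, together with the bounded inverse theorem, is indispensable; once that point is secured, the remainder of the argument is bookkeeping with the identities $TT^\sharp T=T$, $T^\sharp TT^\sharp=T^\sharp$ and $TT^\sharp=T^\sharp T$.
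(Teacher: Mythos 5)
The paper itself gives no proof of this theorem --- it is quoted from Robert's article on the group inverse --- so there is no in-paper argument to compare yours against; I can only assess the proposal on its own terms. Your treatment of $(1)\Leftrightarrow(2)$ and of $(3)\Rightarrow(1)$ is correct. The defect is in $(1)\Rightarrow(3)$, precisely at the point you yourself flag: you dispose of the boundedness problem by ``reading $\dotplus$ as a topological direct sum'', which is just another way of \emph{assuming} that $\mathcal{R}(T)$ is closed. That is not part of the hypothesis: condition (2) is purely algebraic, and the paper explicitly records closedness of the range as a \emph{consequence} of group invertibility (``if $T$ admits a group inverse then it has closed range''), so the direct sum in (1) must be read algebraically and the closedness of $\mathcal{R}(T)$ has to be derived, not postulated. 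As written, your argument proves the weaker statement in which (1) is replaced by ``$\mathcal{R}(T)$ is closed and $\mathcal{R}(T)\dotplus\mathcal{N}(T)=\mathcal{H}$''.

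The gap is genuine but fillable with one additional step. Assume (1), hence (2). Let $\tilde{T}$ be the bounded operator induced by $T$ on the Banach space $\mathcal{H}/\mathcal{N}(T)$. It is surjective because $\mathcal{R}(T)+\mathcal{N}(T)=\mathcal{H}$, and injective because $\tilde{T}[x]=0$ means $Tx\in\mathcal{N}(T)$, i.e.\ $x\in\mathcal{N}(T^2)=\mathcal{N}(T)$. By the open mapping theorem $\tilde{T}^{-1}$ is bounded, so there is $c>0$ with $\mathrm{dist}(Tx,\mathcal{N}(T))\geq c\,\mathrm{dist}(x,\mathcal{N}(T))$ for all $x\in\mathcal{H}$; since $\|Tx\|\geq \mathrm{dist}(Tx,\mathcal{N}(T))$, the reduced minimum modulus of $T$ is at least $c>0$ and $\mathcal{R}(T)$ is closed. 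With $\mathcal{R}(T)$ and $\mathcal{N}(T)$ both closed, the algebraic decomposition $\mathcal{H}=\mathcal{R}(T)\dotplus\mathcal{N}(T)$ makes the oblique projection $Q_{\mathcal{R}(T)//\mathcal{N}(T)}$ bounded by the closed graph theorem, and the remainder of your argument --- the bounded inverse $B$ of $T|_{\mathcal{R}(T)}$, the definition $T^\sharp=BQ$, and the verification of the three defining identities --- goes through verbatim.
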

It is well known that if $T$ admits a group inverse then it has  closed range, $\mathcal{R}(T^\sharp)=\mathcal{R}(T)$, $\mathcal{N}(T^\sharp)=\mathcal{N}(T)$ and $TT^\sharp=T^\sharp T=Q_{\mathcal{R}(T)//\mathcal{N}(T)}$. In addition, $T$ admits a group inverse if and only if $T^*$ admits a group inverse. On the other hand, if $T \in \mathcal{L}(\mathcal{\mathcal{H}})$  is an EP operator (i.e., $T$ has closed range and  $\mathcal{R}(T)=\mathcal{R}(T^*)$)  then it admits a group inverse. Moreover, $T^\sharp=T^\dagger$ if and only if $T$ is an EP operator.

\

Remember that every $T\in\mathcal{L}(\mathcal{H})$ admits a factorization $T=U|T|$, where $U$ is a partial isometry and $|T|=(T^*T)^{1/2}$. Moreover, this factorization is unique if the condition $\mathcal{N}(U)=\mathcal{N}(T)$ is imposed.  The factorization $T=U_T|T|$, where $U_T$ is the partial isometry such that  $\mathcal{N}(U_T)=\mathcal{N}(T)$ is called the polar decomposition of $T$. It is not dificult to see that $T^\dagger=|T|^\dagger U_T^*$. In addition, if  $T\in\mathcal{L}^h$ then $T\leq |T|$.

\
 
 We finish this section by introducing some order relations on $\mathcal{L}(\mathcal{H})$ that will be used throughout the article.
 
\begin{defi}\label{ordenes}
	Let $S,T\in \mathcal{L}(\mathcal{H})$. Then:
	\begin{enumerate}
		\item $S\overset{*}\leq T$ if   there exist $P,Q\in \mathcal{P}$ such that $S=PT$ and $S^*=QT^*$. The orthogonal projections can be chosen such that $P=P_S$ and $Q=P_{S^*}$.
		
		\item $S\overset{-}\leq T$ if  there exist $P,Q\in \mathcal{Q}$ such that $S=PT$ and $S^*=QT^*$. The ranges of $P$ and $Q$ can be fixed as $\mathcal R(P)=\overline{\mathcal R(S)}$ and $\mathcal R(Q)=\overline{\mathcal R(S^*)}$.
		
		\item  $S\overset{\sharp}\leq T$ if $S=T$ or if  there exists $Q\in\mathcal{Q}$ with $\mathcal{R}(Q)=\mathcal{R}(S)$, $\mathcal{N}(Q)=\mathcal{N}(S)$ such that $S=QT=TQ$.
	\end{enumerate}
\end{defi}

The above order relations are known as {\it{star order, minus order}} and {\it{sharp order}}, respectively. The reader can be referred to \cite{AntCanSto2010}, \cite{Dji2017Theminusordes}, \cite{Efimov}, \cite{jose2015partial} and references therein for different  treatments on these order relations. The following result gives  characterizations of these orders by means of  operator range decompositions. The proof can be  found in  \cite{Dji2017Theminusordes} and \cite{fongi2023moore}. 

\begin{pro}\label{caracterizaciones ordenes}
	Consider $S,T\in \mathcal{L}{(\mathcal{H})}$. Then,
	\begin{enumerate}
		\item $S\overset{-}\leq T$ if  $\mathcal{R}(T)=\mathcal{R}(S) \overset{.}{+}\mathcal{R}(T-S)$ and $\mathcal{R}(T^*)=\mathcal{R}(S^*) \overset{.}{+}\mathcal{R}(T^*-S^*)$.
		\item $S\overset{*}\leq T$ if and only if  $\mathcal{R}(T)=\mathcal{R}(S) \oplus\mathcal{R}(T-S)$ and $\mathcal{R}(T^*)=\mathcal{R}(S^*) \oplus\mathcal{R}(T^*-S^*)$.
		\item If  $S,T$ are group invertible then $S\overset{\sharp}\leq T$  if and only if $\mathcal{R}(T)=\mathcal{R}(S) \dot +\mathcal{R}(T-S)$ where  $\mathcal{R}(T-S)\subseteq \mathcal{N}(S)$ and $\mathcal{R}(S)\subseteq \mathcal {N}(T-S)$.
	\end{enumerate}
\end{pro}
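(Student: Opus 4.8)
The plan is to prove the three equivalences with a single template, since by Definition~\ref{ordenes} each order relation amounts to the existence of idempotents $P,Q$ (orthogonal for $\overset{*}\leq$, oblique for $\overset{-}\leq$, and a single two-sided idempotent for $\overset{\sharp}\leq$) satisfying $S=PT$ and $S^*=QT^*$ with prescribed ranges. The bridge between these factorizations and the range decompositions is the identity $T-S=(I-P)T$, which lets me read off where $\mathcal{R}(S)$ and $\mathcal{R}(T-S)$ sit relative to $\mathcal{R}(P)$ and $\mathcal{N}(P)$.

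For the forward implication I assume the relevant idempotents exist. From $S=PT$ I get $T-S=(I-P)T$, so $\mathcal{R}(S)\subseteq\mathcal{R}(P)$ and $\mathcal{R}(T-S)\subseteq\mathcal{R}(I-P)=\mathcal{N}(P)$; since $\mathcal{R}(P)\cap\mathcal{N}(P)=\{0\}$ the sum $\mathcal{R}(S)+\mathcal{R}(T-S)$ is direct, and it is orthogonal exactly when $P$ is an orthogonal projection, which is what upgrades $\dotplus$ to $\oplus$ in item~2. To place the sum inside $\mathcal{R}(T)$ I use the adjoint relation: $S^*=QT^*$ gives $S=TQ^*$, whence $\mathcal{R}(S)=\mathcal{R}(TQ^*)\subseteq\mathcal{R}(T)$ and $T-S=T(I-Q^*)$ gives $\mathcal{R}(T-S)\subseteq\mathcal{R}(T)$; together with the pointwise identity $Tx=Sx+(T-S)x$ (which yields the reverse inclusion) this produces $\mathcal{R}(T)=\mathcal{R}(S)\dotplus\mathcal{R}(T-S)$. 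The companion decomposition of $\mathcal{R}(T^*)$ follows by running the same argument on the adjoints, whose defining relation is symmetric. Item~3 is handled separately because its characterization involves only the decomposition of $\mathcal{R}(T)$ plus two containments: the single idempotent $Q$ with $\mathcal{R}(Q)=\mathcal{R}(S)$, $\mathcal{N}(Q)=\mathcal{N}(S)$ and $S=QT=TQ$ gives $\mathcal{R}(T-S)\subseteq\mathcal{N}(Q)=\mathcal{N}(S)$ from $T-S=(I-Q)T$, while $(T-S)Q=(I-Q)TQ=(I-Q)S=S-QS=0$ gives $\mathcal{R}(S)=\mathcal{R}(Q)\subseteq\mathcal{N}(T-S)$.

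For the converse I must manufacture the idempotents from the decompositions. The pointwise splitting $Tx=Sx+(T-S)x$ along $\mathcal{R}(T)=\mathcal{R}(S)\dotplus\mathcal{R}(T-S)$ shows that the projection onto the first summand carries $Tx$ to $Sx$, giving $PT=S$, and symmetrically $QT^*=S^*$. In item~2 this projection is exactly $P_S$: orthogonality forces $P_S(T-S)=0$ and $P_SS=S$, and $P_S$ is bounded with closed range for free, so no extra work is needed. In item~3 group invertibility supplies the idempotent directly: the hypotheses $\mathcal{R}(T-S)\subseteq\mathcal{N}(S)$ and $\mathcal{R}(S)\subseteq\mathcal{N}(T-S)$ allow me to take $Q=SS^\sharp=S^\sharp S=Q_{\mathcal{R}(S)//\mathcal{N}(S)}$; then $QTx=Q(Sx)+Q((T-S)x)=Sx$ gives $QT=S$, while $TQ=SQ=SS^\sharp S=S$ uses $\mathcal{R}(S)\subseteq\mathcal{N}(T-S)$ together with the group-inverse identity, so $S=QT=TQ$.

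The only genuinely delicate point, which I expect to be the main obstacle, is the converse of item~1 together with the requirement that the resulting $P,Q$ lie in $\mathcal{Q}$: an algebraic direct sum $\mathcal{R}(T)=\mathcal{R}(S)\dotplus\mathcal{R}(T-S)$ need not give a bounded oblique projection when the summands fail to be closed. I would handle it by reading $\dotplus$ here as a topological direct sum, so that the projection onto $\mathcal{R}(S)$ along $\mathcal{R}(T-S)$ is bounded and extends to an element of $\mathcal{Q}$ with range $\overline{\mathcal{R}(S)}$, exactly as in the closed-range settings of \cite{Dji2017Theminusordes, fongi2023moore}. The star and sharp cases sidestep this issue entirely, since orthogonality and group invertibility, respectively, furnish the bounded idempotent at no cost.
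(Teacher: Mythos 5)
The paper does not actually prove Proposition \ref{caracterizaciones ordenes}; it refers the reader to \cite{Dji2017Theminusordes} and \cite{fongi2023moore}, so your argument has to stand on its own. Your treatment of items 2 and 3 is correct and is the natural argument: the identities $T-S=(I-P)T$ and $S=TQ^*$ give the forward inclusions and the directness (orthogonality when $P\in\mathcal{P}$), and for the converses the choices $P=P_S$, $Q=P_{S^*}$ in item 2 and $Q=SS^\sharp=Q_{\mathcal{R}(S)//\mathcal{N}(S)}$ in item 3 do exactly what you claim, with boundedness of the idempotents coming for free from orthogonality and group invertibility respectively.

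The gap is in item 1. As stated, item 1 asserts only the implication ``range decompositions $\Rightarrow S\overset{-}\leq T$'', with $\dotplus$ the \emph{algebraic} direct sum; that is, the direction you flag as delicate is the entire content of the item, not a side issue. Your proposed fix --- to ``read $\dotplus$ as a topological direct sum'' --- replaces the hypothesis by a stronger one and therefore proves a different statement. Even under that reading the construction is incomplete: a bounded idempotent has closed range, so when $\mathcal{R}(S)$ is not closed there is no $P\in\mathcal{Q}$ with $\mathcal{R}(P)=\mathcal{R}(S)$; one must instead produce $P\in\mathcal{Q}$ with $\mathcal{R}(P)=\overline{\mathcal{R}(S)}$ and $\mathcal{N}(P)\supseteq\mathcal{R}(T-S)$, which requires at least $\overline{\mathcal{R}(S)}\cap\mathcal{R}(T-S)=\{0\}$ and the existence of a closed complement of $\overline{\mathcal{R}(S)}$ containing $\mathcal{R}(T-S)$ --- none of which follows from the algebraic decomposition of $\mathcal{R}(T)$ alone. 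This implication is precisely the main theorem of \cite{Dji2017Theminusordes}, and its proof uses the two range-additivity hypotheses, for $T$ and for $T^*$, jointly and in an essentially intertwined way to manufacture the bounded idempotents; your argument uses them only separately and in parallel, which is why it cannot close this case.
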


\section{Proper splittings of operators and an iterative process}

In \cite{MR348984} the concept of proper splitting of a matrix $T$ is applied to approximate the minimum least square solution of a linear system $Tx=w.$ In \cite{AG-splitting} this kind of splitting is used  to guarantee the convergence of an iterative process to  the reduced solution for $\mathcal{M}$ of a solvable matrix equation $TX=W$, where $T,W\in\mathbb{C}^{m\times n}$ and $\mathcal{M}$ is a  subspace of $\mathbb{C}^n$ such that $\mathcal{M}\dot+\mathcal{N}(T)=\mathbb{C}^n$. There, sufficient conditions for the convergence of the iterative process were given. Our goal is to extend the study made in  \cite{AG-splitting} to the infinite dimensional context. 

\begin{defi}
Consider $T\in\mathcal{L}(\mathcal{H})$ with closed range. A proper splitting of $T$  is a decomposition  $T=U-V$, where $U,V\in\mathcal{L}(\mathcal{H})$ and $\mathcal{R}(U)=\mathcal{R}(T)$ and $\mathcal{N}(U)=\mathcal{N}(T)$.
\end{defi}

Consider $T\in\mathcal{L}(\mathcal{H})$ with closed range, $W\in\mathcal{L}(\mathcal{H})$ such that  $\mathcal{R}(W)\subseteq \mathcal{R}(T)$ and a closed subspace $\eme\subseteq \mathcal{H}$ such that $\eme\overset{.}{+}\mathcal{N}(T)=\mathcal{H}$. Let $T=U-V$ be a proper splitting of $T$. Consider $Y_\eme, Z_\eme \in \mathcal{L}(\mathcal{H})$  the reduced solutions for $\eme$ of $UY=V$ and $UZ=W,$ respectively. We define, as in \cite{AG-splitting},  {\it{the iterative process for $\mathcal{M}$ of the proper splitting $T=U-V$ with respect to $W$}}:

\begin{equation}
X^{i+1}=Y_\eme X^i+ Z_\eme  .\label{proceso iterativo}
\end{equation}

As in the finite dimensional context \cite{AG-splitting}, in the infinite dimensional case it can be proved  that if the iteration (\ref{proceso iterativo}) converges, then it converges  to the reduced solution for $\eme$ of $TX=W.$  We omit the proof of the following two results because they are similar to those given in \cite{AG-splitting}.

\begin{teo}\label{convergencia}
Consider $T\in \mathcal{L}(\mathcal{H})$ with closed range,  $W\in \mathcal{L}(\mathcal{H})$ such that $\mathcal{R}(W)\subseteq \mathcal{R}(T)$ and $\eme$ a closed subspace of $\mathcal{H}$ such that $\eme\overset{.}{+}\mathcal{N}(T)=\mathcal{H}$.  Consider the proper splitting $T= U-V$ of $T$. Then the iterative process (\ref{proceso iterativo}) converges for all $X^0$ to $X_\eme$, the reduced solution for $\eme$ of the equation  $TX=W,$ if and only if $\rho(Y_\eme)<1$. 
\end{teo}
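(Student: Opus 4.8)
The plan is to reduce everything to the elementary equivalence that, for $A\in\mathcal{L}(\mathcal{H})$, one has $\|A^i\|\to 0$ if and only if $\rho(A)<1$; this follows from Gelfand's formula $\rho(A)=\lim_i\|A^i\|^{1/i}$ (if $\rho(A)<1$, then $\|A^i\|<r^i\to 0$ for any $r\in(\rho(A),1)$ and large $i$) together with $\rho(A^{i_0})=\rho(A)^{i_0}$ (if $\|A^{i_0}\|<1$ for some $i_0$, then $\rho(A)^{i_0}\le\|A^{i_0}\|<1$). First I would record that all the operators involved are well defined: since $\mathcal{R}(V)=\mathcal{R}(U-T)\subseteq\mathcal{R}(U)$ and $\mathcal{R}(W)\subseteq\mathcal{R}(T)=\mathcal{R}(U)$, and since $\eme\dotplus\mathcal{N}(U)=\eme\dotplus\mathcal{N}(T)=\mathcal{H}$, Douglas' theorem guarantees the existence of the reduced solutions $Y_\eme,Z_\eme$ of $UY=V$ and $UZ=W$, as well as of $X_\eme$ for $TX=W$; by construction $\mathcal{R}(Y_\eme),\mathcal{R}(Z_\eme),\mathcal{R}(X_\eme)\subseteq\eme$.

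The crucial algebraic step is to verify that $X_\eme$ is a fixed point of the iteration, i.e. $X_\eme=Y_\eme X_\eme+Z_\eme$. Multiplying on the left by $U$ and using $UY_\eme=V=U-T$, $UZ_\eme=W$ and $TX_\eme=W$ gives
\[
U(Y_\eme X_\eme+Z_\eme)=VX_\eme+W=(U-T)X_\eme+W=UX_\eme .
\]
Hence $\mathcal{R}\big((Y_\eme X_\eme+Z_\eme)-X_\eme\big)\subseteq\mathcal{N}(U)=\mathcal{N}(T)$; but this range is also contained in $\eme$, since $Y_\eme,Z_\eme,X_\eme$ all have range in $\eme$. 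As $\eme\cap\mathcal{N}(T)=\{0\}$, the difference vanishes, so $X_\eme=Y_\eme X_\eme+Z_\eme$. Subtracting this identity from $X^{i+1}=Y_\eme X^i+Z_\eme$ gives the error recursion $X^{i+1}-X_\eme=Y_\eme(X^i-X_\eme)$, and therefore $X^i-X_\eme=Y_\eme^{\,i}(X^0-X_\eme)$ for every $i\ge 0$.

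From here both implications are short. If $\rho(Y_\eme)<1$, then $\|Y_\eme^{\,i}\|\to 0$, so $\|X^i-X_\eme\|\le\|Y_\eme^{\,i}\|\,\|X^0-X_\eme\|\to 0$ for every $X^0$. Conversely, if the process converges to $X_\eme$ for every initial $X^0$, I would simply take $X^0=X_\eme+I$; then $X^i-X_\eme=Y_\eme^{\,i}$, and the convergence $X^i\to X_\eme$ forces $\|Y_\eme^{\,i}\|\to 0$, whence $\rho(Y_\eme)<1$. The point I expect to be the main obstacle is precisely this converse: in the infinite-dimensional setting it is essential that convergence of the iteration is understood in the operator norm of $\mathcal{L}(\mathcal{H})$, because mere strong convergence $Y_\eme^{\,i}x\to 0$ for all $x$ would \emph{not} imply $\rho(Y_\eme)<1$ (the backward shift has all powers tending to $0$ strongly yet spectral radius $1$). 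Choosing $X^0-X_\eme=I$ is exactly what converts the hypothesis into norm convergence of the powers $Y_\eme^{\,i}$, bypassing this difficulty.
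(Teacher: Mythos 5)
Your proof is correct and is essentially the argument the paper intends: the paper omits the proof and defers to the finite-dimensional case in \cite{AG-splitting}, and your steps (the fixed-point identity $X_\eme=Y_\eme X_\eme+Z_\eme$ obtained from $\eme\cap\mathcal{N}(T)=\{0\}$, the error recursion $X^i-X_\eme=Y_\eme^{\,i}(X^0-X_\eme)$, and Gelfand's formula) are exactly that standard route, written so that it remains valid in infinite dimensions. Your closing observation that the converse requires norm rather than merely strong convergence of the iterates, secured by the choice $X^0=X_\eme+I$, is the one genuinely infinite-dimensional point, and you handle it correctly.
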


\begin{cor}\label{corrho}
Let $T\in \mathcal{L}(\mathcal{H})$ be a  closed range operator  and   $T= U-V$  a proper splitting of $T$.  $W\in \mathcal{L}(\mathcal{H})$ such that $\mathcal{R}(W)\subseteq \mathcal{R}(T)$. Then, the iterative process of the proper splitting $T=U-V$ converges for some closed subspace $\eme$ such that $\eme\overset{.}{+}\mathcal{N}(T)=\mathcal{H}$ if and only if  the iterative process of the proper splitting $T=U-V$ converges for all closed subspace $\eme$ such that $\eme\overset{.}{+}\mathcal{N}(T)=\mathcal{H}.$ In particular, the iterative process (\ref{proceso iterativo}) is convergent if and only if $\rho(U^\dagger V)<1$.
\end{cor}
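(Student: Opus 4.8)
The plan is to reduce the statement to Theorem \ref{convergencia} by showing that the spectral radius $\rho(Y_\eme)$ does not depend on the choice of the complement $\eme$; in fact I expect $\rho(Y_\eme)=\rho(U^\dagger V)$ for every closed subspace $\eme$ with $\eme\overset{.}{+}\mathcal{N}(T)=\mathcal{H}$. Once this is established the corollary is immediate: Theorem \ref{convergencia} asserts that the process for $\eme$ converges for all $X^0$ exactly when $\rho(Y_\eme)<1$, and since the quantity $\rho(U^\dagger V)$ is the same for every admissible $\eme$, convergence for one such subspace is equivalent to convergence for all of them, and in either case equivalent to $\rho(U^\dagger V)<1$.

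First I would record the relevant factorization facts. Since $V=U-T$ and $\mathcal{R}(U)=\mathcal{R}(T)$, one has $\mathcal{R}(V)\subseteq \mathcal{R}(T)=\mathcal{R}(U)$, so Douglas' theorem guarantees that the reduced solution $Y_\eme$ of $UY=V$ exists for every admissible $\eme$; moreover $U^\dagger V$ is precisely the Douglas reduced solution, because $U(U^\dagger V)=P_U V=V$ and $\mathcal{R}(U^\dagger V)\subseteq \mathcal{R}(U^*)=\mathcal{N}(T)^\bot$. I would also note that $\mathcal{N}(T)=\mathcal{N}(U)\subseteq \mathcal{N}(V)$: indeed, if $x\in \mathcal{N}(T)=\mathcal{N}(U)$ then $Vx=Ux-Tx=0$, and hence $\mathcal{N}(T)\subseteq \mathcal{N}(U^\dagger V)$ as well.

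The key step is to express $Y_\eme$ through $U^\dagger V$ and an oblique projection. Writing $Q_\eme=Q_{\eme//\mathcal{N}(T)}$, I claim $Y_\eme=Q_\eme\, U^\dagger V$. The operator $Q_\eme U^\dagger V$ has range contained in $\eme$, and since $(I-Q_\eme)U^\dagger V$ has range in $\mathcal{R}(I-Q_\eme)=\mathcal{N}(U)$, we get $U(Q_\eme U^\dagger V)=U(U^\dagger V)=V$; by the uniqueness part of Douglas' theorem this forces $Q_\eme U^\dagger V=Y_\eme$. On the other hand, because $\mathcal{N}(T)\subseteq \mathcal{N}(U^\dagger V)$ while $\mathcal{R}(I-Q_\eme)=\mathcal{N}(T)$, we also have $(U^\dagger V)(I-Q_\eme)=0$, that is $(U^\dagger V)Q_\eme=U^\dagger V$.

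Finally I would invoke the permanence of the nonzero spectrum under commutation, $\sigma(AB)\setminus\{0\}=\sigma(BA)\setminus\{0\}$, which yields $\rho(AB)=\rho(BA)$ for any $A,B\in\mathcal{L}(\mathcal{H})$. Applying this with $A=Q_\eme$ and $B=U^\dagger V$ gives $\rho(Y_\eme)=\rho(Q_\eme U^\dagger V)=\rho((U^\dagger V)Q_\eme)=\rho(U^\dagger V)$, the last equality by the absorption identity above. This shows $\rho(Y_\eme)$ is independent of $\eme$ and equal to $\rho(U^\dagger V)$, which together with Theorem \ref{convergencia} closes the argument. The main obstacle is establishing the two identities $Y_\eme=Q_\eme U^\dagger V$ and $(U^\dagger V)Q_\eme=U^\dagger V$ correctly; once these are in place, the spectral-radius permanence finishes the proof with no further estimates.
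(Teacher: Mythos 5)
Your argument is correct, and it is essentially the argument the paper leaves implicit (the authors omit the proof, deferring to the finite-dimensional case in \cite{AG-splitting}): the identities $Y_\eme=Q_{\eme//\mathcal{N}(T)}U^\dagger V$ and $(U^\dagger V)Q_{\eme//\mathcal{N}(T)}=U^\dagger V$, combined with $\sigma(AB)\setminus\{0\}=\sigma(BA)\setminus\{0\}$, give $\rho(Y_\eme)=\rho(U^\dagger V)$ for every admissible $\eme$, and Theorem \ref{convergencia} then yields the corollary. All the supporting steps (existence and uniqueness of the reduced solution via Douglas' theorem, $\mathcal{N}(T)\subseteq\mathcal{N}(U^\dagger V)$, boundedness of the oblique projection) are justified, so nothing is missing.
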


Given $T\in\mathcal{L}(\mathcal{H})$ with closed range, the aim of this section is to obtain sufficient conditions that guarantee the convergence of a general proper splitting $T=U-V$. We first collect some properties of  proper splittings that will be useful.
\begin{pro}\label{propiedades generales splitting}
Consider $T\in\mathcal{L}(\mathcal{H})$ a closed range operator. If $T=U-V$ is a proper splitting of $T$ then the following assertions follow:  
\begin{enumerate}
\item $(U^\dagger T)^\dagger =T^\dagger U$;
\item $\mathcal{N}(U^\dagger V)=\mathcal{N}(V)$
\item $T^\dagger=(I-U^\dagger V)^{-1}U^\dagger$.
\end{enumerate}
\end{pro}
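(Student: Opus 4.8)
The plan is to handle the three assertions in turn, using throughout the structural consequences of the proper-splitting hypotheses. Since $\mathcal{R}(U)=\mathcal{R}(T)$ and $\mathcal{R}(U^*)=\mathcal{N}(U)^\bot=\mathcal{N}(T)^\bot=\mathcal{R}(T^*)$, we have $UU^\dagger=P_T$ and $U^\dagger U=P_{T^*}$; moreover $V=U-T$ satisfies $\mathcal{R}(V)\subseteq\mathcal{R}(T)$ and $\mathcal{N}(T)=\mathcal{N}(U)\subseteq\mathcal{N}(V)$ (the last because $Vx=Ux-Tx=0$ for $x\in\mathcal{N}(T)=\mathcal{N}(U)$). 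I abbreviate $\mathcal{M}:=\mathcal{R}(T^*)=\mathcal{N}(T)^\bot$, so that $\mathcal{H}=\mathcal{M}\oplus\mathcal{N}(T)$ orthogonally.

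For (1) I would invoke the reverse-order law of Proposition \ref{dagger de AB con rg iguales} applied to the product $U^\dagger T$ with $S:=U^\dagger$. First I check that $U^\dagger T$ has closed range: since $U^\dagger$ annihilates $\mathcal{R}(U)^\bot$ and carries $\mathcal{R}(U)=\mathcal{R}(T)$ onto $\mathcal{R}(U^\dagger)=\mathcal{M}$, one gets $\mathcal{R}(U^\dagger T)=U^\dagger(\mathcal{R}(T))=\mathcal{R}(U^\dagger)=\mathcal{M}$, which is closed. The two hypotheses $\mathcal{R}(S^*ST)\subseteq\mathcal{R}(T)$ and $\mathcal{R}(TT^*S^*)\subseteq\mathcal{R}(S^*)$ then hold trivially, because $\mathcal{R}(S^*)=\mathcal{R}((U^\dagger)^*)=\mathcal{R}(U)=\mathcal{R}(T)$ and $\mathcal{R}(TT^*)=\mathcal{R}(T)$, so both left-hand sides already lie inside $\mathcal{R}(T)$. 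Proposition \ref{dagger de AB con rg iguales} yields $(U^\dagger T)^\dagger=T^\dagger(U^\dagger)^\dagger=T^\dagger U$, using $(U^\dagger)^\dagger=U$.

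Assertion (2) is immediate: if $U^\dagger Vx=0$ then $Vx\in\mathcal{N}(U^\dagger)=\mathcal{R}(U)^\bot$, while $Vx\in\mathcal{R}(V)\subseteq\mathcal{R}(T)=\mathcal{R}(U)$, so $Vx\in\mathcal{R}(U)\cap\mathcal{R}(U)^\bot=\{0\}$; the reverse inclusion is obvious, giving $\mathcal{N}(U^\dagger V)=\mathcal{N}(V)$.

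For (3) the step I expect to cost the most is showing that $I-U^\dagger V$ is invertible, so that $(I-U^\dagger V)^{-1}$ is legitimate. I would argue via the block structure relative to $\mathcal{H}=\mathcal{M}\oplus\mathcal{N}(T)$: by $\mathcal{R}(U^\dagger V)\subseteq\mathcal{R}(U^\dagger)=\mathcal{M}$ together with $\mathcal{N}(T)\subseteq\mathcal{N}(V)=\mathcal{N}(U^\dagger V)$ (from (2)), the operator $U^\dagger V$ takes the block-diagonal form $W\oplus 0$ with $W:\mathcal{M}\to\mathcal{M}$. Since $U^\dagger U=P_{T^*}=P_{\mathcal{M}}$, the identity $U^\dagger T=U^\dagger U-U^\dagger V=P_{\mathcal{M}}-U^\dagger V$ becomes $(I_{\mathcal{M}}-W)\oplus 0$. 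The argument of (2) applied to $T$ gives $\mathcal{N}(U^\dagger T)=\mathcal{N}(T)=\mathcal{M}^\bot$, and by (1) its range is the closed subspace $\mathcal{M}$; comparing with the block form forces $I_{\mathcal{M}}-W$ to be injective with range all of $\mathcal{M}$, hence a bounded bijection of $\mathcal{M}$, thus boundedly invertible by the inverse mapping theorem. Consequently $I-U^\dagger V=(I_{\mathcal{M}}-W)\oplus I_{\mathcal{M}^\bot}$ is invertible. The formula itself then follows from a short computation: using $U^\dagger U=P_{T^*}$, $TT^\dagger=P_T$, $\mathcal{R}(T^\dagger)=\mathcal{M}$ and $U^\dagger P_T=U^\dagger$, one finds $U^\dagger VT^\dagger=U^\dagger(U-T)T^\dagger=T^\dagger-U^\dagger$, whence $(I-U^\dagger V)T^\dagger=U^\dagger$ and therefore $T^\dagger=(I-U^\dagger V)^{-1}U^\dagger$.
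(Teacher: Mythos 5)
Your proof is correct, but for item 3 it takes a genuinely different route from the paper's. The paper proves only item 3 (items 1 and 2 are dismissed as in the finite-dimensional case, which your arguments via the reverse order law and the intersection $\mathcal{R}(U)\cap\mathcal{R}(U)^\bot=\{0\}$ supply cleanly). For the invertibility of $I-U^\dagger V=P_{\mathcal{N}(T)}+U^\dagger T$, the paper does not pass through a block decomposition and the inverse mapping theorem as you do; instead it exhibits the explicit two-sided inverse $P_{\mathcal{N}(T)}+(U^\dagger T)^\dagger=P_{\mathcal{N}(T)}+T^\dagger U$ and verifies the identity $(P_{\mathcal{N}(T)}+(U^\dagger T)^\dagger)(P_{\mathcal{N}(T)}+U^\dagger T)=I$ directly, using item 1 and the fact that $(U^\dagger T)^\dagger U^\dagger T=P_{\mathcal{N}(T)^\bot}$. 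For the formula itself, the paper checks that $X=(I-U^\dagger V)^{-1}U^\dagger$ satisfies $TX=P_T$ and $\mathcal{R}(X)=\mathcal{N}(T)^\bot$, and then invokes the characterization of $T^\dagger$ as the Douglas reduced solution of $TX=P_T$; you instead compute $U^\dagger VT^\dagger=T^\dagger-U^\dagger$, hence $(I-U^\dagger V)T^\dagger=U^\dagger$, and multiply by the inverse. The trade-off: the paper's construction yields the closed form $(I-U^\dagger V)^{-1}=P_{\mathcal{N}(T)}+T^\dagger U$ as a by-product (useful elsewhere in the paper), while your argument is more elementary in that it avoids the Douglas-reduced-solution characterization, at the cost of an appeal to the open mapping theorem that the explicit inverse renders unnecessary.
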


\begin{proof}
We only prove item 3 because the proofs of items 1 and 2  are similar to those in the finite dimensional case. 

3. Let $T=U-V$ be a proper splitting of $T$. First, let us see that $I-U^\dagger V$ is an invertible operator. In fact, observe that $U^\dagger V=U^\dagger(U-T)=P_{T^*}-U^\dagger T$, then
 $I-U^\dagger V=P_{\mathcal{N}(T)}+U^\dagger T$. In order to see that this operator is invertible it is sufficient to note that $(P_{\mathcal{N}(T)}+(U^\dagger T)^\dagger)(P_{\mathcal{N}(T)}+U^\dagger T)=(P_{\mathcal{N}(T)}+U^\dagger T)(P_{\mathcal{N}(T)}+(U^\dagger T)^\dagger)=I$. Therefore, $(I-U^\dagger V)^{-1}=P_{\mathcal{N}(T)}+(U^\dagger T)^\dagger$. Now, let us see that $T^\dagger=(I-U^\dagger V)^{-1}U^\dagger $. In fact, since $T(I-U^\dagger V)^{-1}U^\dagger =T(P_{\mathcal{N}(T)}+(U^\dagger T)^\dagger)U^\dagger =T(U^\dagger T)^\dagger U^\dagger =TT^\dagger UU^\dagger =P_T$ and $\mathcal{R}((I-U^\dagger V)^{-1}U^\dagger)=\mathcal{R}((P_{\mathcal{N}(T)}+(U^\dagger T)^\dagger)U^\dagger )=\mathcal{R}((U^\dagger T)^\dagger U^\dagger )=\mathcal{R}(T^\dagger)=\mathcal{N}(T)^\bot$ then the assertion follows. 
\end{proof}

\begin{pro}\label{polarpositivo}
Let $T\in\mathcal{L}(\mathcal{H})$ be a closed range operator. If $T=U-V$ is a proper splitting of $T$ then the following assertions are equivalent:
\begin{enumerate}
\item $T^\dagger V\in \mathcal{L}^+$;
\item $U^\dagger V \in  \mathcal{L}^+$;
\item $0\leq U^\dagger V\leq P_{V^*}$.
\item $0\leq U^\dagger T\leq P_{T^*}$;
\item The equation $UX=V$ has a postive solution;
\item $U^\dagger T\in\mathcal{L}^h$ and $(P_{T^*}-U^\dagger T)^2\leq \lambda (P_{T^*}-U^\dagger T)$,  for some $\lambda\geq 0$.
\end{enumerate}
\end{pro}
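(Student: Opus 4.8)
The plan is to push everything onto the single operator $A:=U^\dagger V$ and to use three facts available for any proper splitting. From $\mathcal{R}(U)=\mathcal{R}(T)$ and $\mathcal{N}(U)=\mathcal{N}(T)$ one gets $U^\dagger U=P_{T^*}$ and $UU^\dagger=P_T$, hence $A=P_{T^*}-U^\dagger T$ and, since $\mathcal{R}(V)\subseteq\mathcal{R}(T)=\mathcal{R}(U)$, also $UA=UU^\dagger V=P_TV=V$. By item~3 of Proposition~\ref{propiedades generales splitting}, $I-A$ is invertible and $T^\dagger V=(I-A)^{-1}A$; by item~2, $\mathcal{N}(A)=\mathcal{N}(V)$, so $P_{V^*}=P_{\mathcal{N}(A)^\perp}$, which equals $P_{\overline{\mathcal{R}(A)}}$ once $A$ is Hermitian. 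These identities turn each of the six assertions into a spectral statement about $A$.

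I would first dispatch the implications among $2$, $5$ and $6$. For $2\Rightarrow5$, note that $X=A$ is a positive solution of $UX=V$ because $UA=V$. For $5\Rightarrow2$, given a positive solution $X_0$ of $UX_0=V$, I would show that every solution maps $\mathcal{N}(T)$ into itself: if $x\in\mathcal{N}(T)=\mathcal{N}(U)$ then $Vx=0$, so $UX_0x=0$ and $X_0x\in\mathcal{N}(U)=\mathcal{N}(T)$; hence the block $P_{T^*}X_0P_{\mathcal{N}(T)}$ vanishes and $A=P_{T^*}X_0=P_{T^*}X_0P_{T^*}\ge0$. Finally $2\Leftrightarrow6$ is spectral calculus: for Hermitian $A$ the inequality $A^2\le\lambda A$ holds for some $\lambda\ge0$ exactly when $\sigma(A)\subseteq[0,\infty)$, i.e.\ when $A\ge0$ (taking $\lambda=\|A\|$).

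Next I would handle $1$, $3$ and $4$. Writing $U^\dagger T=P_{T^*}-A$, the sandwich in $4$ becomes ``$A\ge0$ and $A\le P_{T^*}$'' and that in $3$ becomes ``$A\ge0$ and $A\le P_{V^*}$''; since $\mathcal{R}(A)\subseteq\mathcal{R}(T^*)$ forces $P_{V^*}\le P_{T^*}$ and, for positive $A$, each bound collapses to $\|A\|\le1$, conditions $3$ and $4$ coincide. For $3\Leftrightarrow1$ I would use that $\|A\|<1$ makes $I-A$ positive definite, so that $T^\dagger V=(I-A)^{-1}A=(I-A)^{-1/2}A(I-A)^{-1/2}$ is a congruence of $A$ by the invertible $(I-A)^{-1/2}$, whence $T^\dagger V\ge0\Leftrightarrow A\ge0$; conversely $T^\dagger V\ge0$ makes $A=T^\dagger V(I+T^\dagger V)^{-1}$ Hermitian, and the spectral mapping $t\mapsto t/(1-t)$ together with $1\notin\sigma(A)$ confines $\sigma(A)$ to $[0,1)$.

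The step I expect to be the real obstacle is the bridge joining these two groups, namely promoting the bare positivity ``$A\ge0$'' of $2$, $5$, $6$ to the norm bound ``$\|A\|\le1$'', equivalently ``$I-A>0$'', equivalently ``$U^\dagger T\ge0$'', demanded by $1$, $3$, $4$. Because $t\mapsto t/(1-t)$ carries $(1,\infty)$ to negative values, the whole equivalence rests on excluding $\sigma(A)\cap(1,\infty)$, and the only extra leverage available is the invertibility of $I-A$ through the formula $(I-A)^{-1}=P_{\mathcal{N}(T)}+(U^\dagger T)^\dagger$, where the sign of $(U^\dagger T)^\dagger$ is precisely what separates $U^\dagger T\ge0$ from $U^\dagger V\ge0$. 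Securing this sign control is where the argument must concentrate its effort; the remaining steps are routine spectral and block computations.
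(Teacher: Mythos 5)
Your reduction of everything to $A=U^\dagger V$ is sound, and the equivalences you actually establish are correct: $2\Leftrightarrow5\Leftrightarrow6$ via the block identity $A=P_{T^*}X_0P_{T^*}$ and the spectral characterization of $A^2\leq\lambda A$, and $1\Leftrightarrow3\Leftrightarrow4$ via the congruence $(I-A)^{-1}A=(I-A)^{-1/2}A(I-A)^{-1/2}$. (Your route through item 5 is more elementary than the paper's, which invokes Sebesty\'en's criterion $VV^*\leq \lambda UV^*$ and then conjugates by $U^\dagger$.) But the proposal is not a proof: the bridge you single out --- promoting $A\geq0$ to $\|A\|\leq1$, equivalently to $U^\dagger T\geq 0$ --- is the entire content separating the two groups of conditions, and you never supply it. Identifying where ``the argument must concentrate its effort'' is not the same as making that effort.

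Moreover, your suspicion that this is the genuine obstruction is correct in the strongest sense: the sign control cannot be extracted from the stated hypotheses. For any nonzero closed range $T$ take $U=-T$ and $V=-2T$; this is a proper splitting ($\mathcal{R}(U)=\mathcal{R}(T)$, $\mathcal{N}(U)=\mathcal{N}(T)$), and $A=U^\dagger V=2P_{T^*}\geq0$ while $T^\dagger V=-2P_{T^*}$, $U^\dagger T=-P_{T^*}$ and $P_{V^*}=P_{T^*}$. Thus items $2$, $5$, $6$ hold and items $1$, $3$, $4$ fail. The paper's proof of $1\Rightarrow2$ is fine --- it applies Theorem \ref{lowner-order} to $P_{T^*}\leq T^\dagger U$, where positivity of $T^\dagger U$ is automatic because it dominates $P_{T^*}$, and uses $(U^\dagger T)^\dagger=T^\dagger U$ from Proposition \ref{propiedades generales splitting} --- but the claimed ``similar'' converse would have to apply Theorem \ref{lowner-order} to $U^\dagger T\leq P_{T^*}$, and that theorem requires $U^\dagger T\in\mathcal{L}^+$ as a hypothesis: precisely the sign you could not control. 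So the gap in your proposal coincides with a gap in the paper's own argument, and closing it requires an additional assumption (for instance $U^\dagger T\geq0$, or $\|U^\dagger V\|\leq1$) rather than a cleverer estimate.
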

\begin{proof}
$1\leftrightarrow 2.$ Suppose that $T^\dagger V\in\mathcal{L}^+$. Since $T^\dagger V=T^\dagger (U-T)= T^\dagger U-P_{T^*}$ then we get that $P_{T^*}\leq T^\dagger U$. Now, as $\mathcal{R}(P_{T^*})\cap\mathcal{N}(T^\dagger U)=\{0\}$ and $\mathcal{R}(T^\dagger U)\cap\mathcal{N}(P_{T^*})=\{0\}$ then, by Theorem \ref{lowner-order}, $U^\dagger T=(T^\dagger U)^\dagger \leq P_{T^*}$. Therefore, $U^\dagger V=U^\dagger (U-T)=P_{T^*}-U^\dagger T\in \mathcal{L}^+$.  The converse is similar.

$1\leftrightarrow 3.$ Let $T=U-V$ be a proper splitting of $T$. Since $T^\dagger =(I-U^\dagger  V)^{-1}U^\dagger $ then
\begin{eqnarray}
0\leq T^\dagger V &\leftrightarrow& 0\leq (I-U^\dagger V)^{-1}U^\dagger V \leftrightarrow 0 \leq \left((I-U^\dagger  V)^{-1}U^\dagger  V \right)^\dagger  \nonumber \\
&\leftrightarrow& 0\leq \left(U^\dagger V \right)^\dagger (I-U^\dagger  V) \leftrightarrow  0\leq \left(U^\dagger  V \right)^\dagger - P_{\mathcal{N}(U^\dagger V)^\bot}\nonumber\\
&\leftrightarrow& 0\leq P_{V^*}\leq \left(U^\dagger  V\right)^\dagger, \label{TdaggerV}
\end{eqnarray}
 where the third equivalence follows by Corollary \ref{ROL-autoadjuntos} and by equivalence $1.\leftrightarrow 2.$ of this proposition. Now, if $0\leq P_{V^*}\leq \left(U^\dagger V\right)^\dagger$ then by Theorem \ref{lowner-order}, we get $0\leq U^\dagger V \leq P_{V^*}$. Conversely, if $0\leq U^\dagger V \leq P_{V^*}$ then Theorem \ref{lowner-order}, $0\leq P_{V^*}\leq \left(U^\dagger V\right)^\dagger$. Then, by the equivalences (\ref{TdaggerV}), the assertion follows.

$2\leftrightarrow 4$. Observe  that  $0\leq U^\dagger T\leq P_{T^*}$ if and only if  $U^\dagger V=P_{T^*}-U^\dagger T\in\mathcal{L}^+$.

$2\leftrightarrow 5$. If $U^\dagger V\in\mathcal{L}^+$ then, by Douglas theorem, the equation $UX=V$ has a positive solution. Conversely, if $UX=V$ has a positive solution then $VV^*\leq \lambda UV^*$ for some $\lambda \geq 0$, see \cite{Seb} or \cite{AG-PA}. Now, multiplying on the left and right by $U^\dagger$ and $(U^\dagger)^*$, respectively,  the assertion follows. 

$5\leftrightarrow 6$. If $UX=V$ has a positive solution then $VV^*\leq \lambda UV^*$ for some $\lambda \geq 0$, see \cite{Seb} or \cite{AG-PA}.  Now, multiplying on the left and right by $U^\dagger$ and $(U^\dagger)^*$, respectively,  and since $U^\dagger V=P_{T^*}-U^\dagger T$, the assertion follows. The converse is similar.
\end{proof}

\begin{pro}\label{polarcompacto}  Consider $T\in\mathcal{L}(\mathcal{H})$ with closed range. If $T=U-V$ is a proper splitting of $T$ then  the following statements are equivalent:
\begin{enumerate}
\item $T^\dagger V\in\mathcal{K}$;
\item $U^\dagger V\in\mathcal{K}$;
\item $V\in\mathcal{K}$;
\end{enumerate}
\end{pro}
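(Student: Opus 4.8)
The statement asserts the equivalence of $T^\dagger V\in\mathcal{K}$, $U^\dagger V\in\mathcal{K}$, and $V\in\mathcal{K}$ for a proper splitting $T=U-V$. The plan is to exploit the two algebraic identities already available in Proposition \ref{propiedades generales splitting}, namely that $U^\dagger V=P_{T^*}-U^\dagger T$ and $T^\dagger=(I-U^\dagger V)^{-1}U^\dagger$, together with the fact that the compact operators $\mathcal{K}$ form a two-sided ideal in $\mathcal{L}(\mathcal{H})$. The ideal property is the engine throughout: composing a compact operator on either side with a bounded operator keeps it compact, so each implication reduces to left- or right-multiplying by a suitable fixed bounded operator.

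For the implication $3\Rightarrow 2$, if $V\in\mathcal{K}$ then $U^\dagger V\in\mathcal{K}$ immediately, since $U^\dagger\in\mathcal{L}(\mathcal{H})$ and $\mathcal{K}$ is a left ideal. For $2\Rightarrow 1$, I would use the identity $T^\dagger=(I-U^\dagger V)^{-1}U^\dagger$ from Proposition \ref{propiedades generales splitting}, item 3, which gives $T^\dagger V=(I-U^\dagger V)^{-1}U^\dagger V$; since $(I-U^\dagger V)^{-1}$ is bounded and $U^\dagger V\in\mathcal{K}$, the left-ideal property yields $T^\dagger V\in\mathcal{K}$. The reverse direction $1\Rightarrow 2$ follows symmetrically by writing $U^\dagger V=(I-U^\dagger V)(I-U^\dagger V)^{-1}U^\dagger V=(I-U^\dagger V)\,T^\dagger V$, again compact as a bounded multiple of a compact operator. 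Thus conditions $1$ and $2$ are interchangeable with essentially no extra work.

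The only implication requiring genuine structure is closing the loop back to $3$, say $2\Rightarrow 3$ (or equivalently $1\Rightarrow 3$), and this is where I expect the main obstacle. From $U^\dagger V\in\mathcal{K}$ one recovers $V$ via $V=U\,U^\dagger V$, using that $UU^\dagger=P_{\mathcal{R}(U)}=P_T$ and that $\mathcal{R}(V)\subseteq\mathcal{R}(T)$, so that $P_T V=V$; then $U(U^\dagger V)=UU^\dagger V=P_T V=V$, and since $U\in\mathcal{L}(\mathcal{H})$ the compactness of $U^\dagger V$ forces $V\in\mathcal{K}$. The delicate point to verify carefully is precisely the identity $UU^\dagger V=V$: it rests on $\mathcal{R}(V)\subseteq\mathcal{R}(T)=\mathcal{R}(U)$, which holds because $T=U-V$ is a proper splitting (so $\mathcal{R}(U)=\mathcal{R}(T)$) and because $V=U-T$ has range contained in $\mathcal{R}(U)+\mathcal{R}(T)=\mathcal{R}(T)$. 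Once this range inclusion is pinned down, the compactness transfers cleanly through the bounded factor $U$, and all three conditions are seen to be equivalent.
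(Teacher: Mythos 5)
Your proposal is correct and follows essentially the same route as the paper: both arguments rest on the identities of Proposition \ref{propiedades generales splitting} together with the two-sided ideal property of $\mathcal{K}$, and both recover $V$ from the compact product via the range inclusion $\mathcal{R}(V)\subseteq\mathcal{R}(T)=\mathcal{R}(U)$ (the paper writes $V=TT^\dagger V$ where you write $V=UU^\dagger V$, and for $1\leftrightarrow 2$ the paper left-multiplies by $(T^\dagger U)^\dagger=U^\dagger T$ where you use $T^\dagger=(I-U^\dagger V)^{-1}U^\dagger$). These are only cosmetic variations; no gap.
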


\begin{proof}
$1 \leftrightarrow 2$. If $T^\dagger V=T^\dagger (U-T)=T^\dagger U- P_{T^*}\in \mathcal{K}$ then $(T^\dagger U)^\dagger(T^\dagger U-P_{T^*})\in\mathcal{K}$. Now observe that $ (T^\dagger U)^\dagger(T^\dagger U-P_{T^*})= P_{T^*}-U^\dagger T=U^\dagger V$. The converse is similar. 

$1\leftrightarrow 3$ Suposse $T^\dagger V\in\mathcal{K}$. Since $\mathcal{R}(V)\subseteq \mathcal{R}(T)$ then $TT^\dagger V=V\in\mathcal{K}$. Conversely, if $V\in\mathcal{K}$ it is clear that $T^\dagger V\in\mathcal{K}$.
\end{proof}

We can now  prove a sufficient condition to guarantee the convergence of a general proper splitting of a closed range operator.  The proof is similar to the one given for the finite dimensional case \cite{AG-splitting}, however we include it for the sake of completeness.

\begin{teo}\label{condiciones suficientes de convergencia}
Consider $T\in\mathcal{L}(\mathcal{H})$ with closed range and $T=U-V$ a proper splitting of $T$. If $U^\dagger V\in\mathcal{L}^+\cap\mathcal{K}$ then the proper splitting of $T$ converges. Moreover, it holds that $\rho(U^\dagger V)=\displaystyle\frac{\rho(T^\dagger V)}{1+\rho(T^\dagger V)}<1$.
\end{teo}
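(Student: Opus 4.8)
The plan is to reduce everything to a spectral computation relating the two positive operators $A:=U^\dagger V$ and $B:=T^\dagger V$. First I would record that both are positive and compact: by hypothesis $A=U^\dagger V\in\mathcal L^+\cap\mathcal K$, and Propositions \ref{polarpositivo} and \ref{polarcompacto} transfer this to $B=T^\dagger V\in\mathcal L^+\cap\mathcal K$. The crucial algebraic link is supplied by Proposition \ref{propiedades generales splitting}(3): since $T^\dagger=(I-U^\dagger V)^{-1}U^\dagger$ with $I-U^\dagger V$ invertible, multiplying on the right by $V$ gives the identity $B=(I-A)^{-1}A$, and in particular $1\notin\sigma(A)$.

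Next I would extract $\sigma(B)$ from $\sigma(A)$. Writing $g(t)=t/(1-t)$, which is continuous on the compact set $\sigma(A)$ precisely because $1\notin\sigma(A)$, one has $B=g(A)$ and hence $\sigma(B)=g(\sigma(A))$ by the spectral mapping theorem for the self-adjoint $A$. At the level of eigenvectors (transparent here since $A$ is compact and positive): if $Ax=\mu x$ with $\mu\neq1$ then $Bx=\frac{\mu}{1-\mu}x$. Because $A\geq0$ we have $\sigma(A)\subseteq[0,\infty)$, and because $B\geq0$ every value $g(\mu)=\mu/(1-\mu)$ must be nonnegative; for $\mu\geq0$ this forces $\mu\in[0,1)$. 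Thus $\sigma(A)\subseteq[0,1)$, and since $\sigma(A)$ is closed with $1\notin\sigma(A)$, its maximum, which equals $\rho(A)$, is strictly below $1$. By Corollary \ref{corrho} this already yields convergence of the splitting.

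Finally I would pin down the quantitative formula. As $g$ is strictly increasing on $[0,1)$ and $\rho(A)=\max\sigma(A)$, the spectral mapping gives $\rho(B)=\max g(\sigma(A))=g(\rho(A))=\frac{\rho(A)}{1-\rho(A)}$, and solving for $\rho(A)$ produces $\rho(A)=\frac{\rho(B)}{1+\rho(B)}$, i.e. $\rho(U^\dagger V)=\frac{\rho(T^\dagger V)}{1+\rho(T^\dagger V)}$, which is automatically $<1$. If one prefers to stay with eigenvectors, the identity comes from a two-sided comparison: the top eigenvalue of $A$ feeds into $B$ to give $\rho(B)\geq \rho(A)/(1-\rho(A))$, while an eigenvector $y$ of $B$ for $\rho(B)$ (which exists because $B$ is compact) satisfies $(1+\rho(B))Ay=\rho(B)y$, so $\frac{\rho(B)}{1+\rho(B)}$ is an eigenvalue of $A$ and hence $\leq\rho(A)$; the two inequalities collapse to equality. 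This is the one place the compactness hypothesis is genuinely exploited, since it guarantees $\rho(B)$ is attained as an eigenvalue.

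I expect the main obstacle to be the bookkeeping in the spectral step: using $1\notin\sigma(A)$ correctly so that $g$ is truly continuous on $\sigma(A)$, and securing the strict inequality $\rho(A)<1$ rather than $\leq1$ (this is exactly where closedness of $\sigma(A)$ together with $1\notin\sigma(A)$ is needed). The positivity of $B$ is what forbids eigenvalues of $A$ from exceeding $1$; without it the identity $B=(I-A)^{-1}A$ alone would not confine $\sigma(A)$ to $[0,1)$.
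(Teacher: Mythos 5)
Your argument is correct and follows essentially the same route as the paper: transfer positivity and compactness from $U^\dagger V$ to $T^\dagger V$ via the earlier propositions, use the identity $T^\dagger=(I-U^\dagger V)^{-1}U^\dagger$ to relate the two operators, and convert spectra through $\lambda\mapsto\lambda/(1+\lambda)$. The paper does this at the level of eigenvectors while you package it as the spectral mapping theorem for $g(t)=t/(1-t)$, which is only a (slightly more careful) rephrasing of the same computation.
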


\begin{proof}
 Note that $U^\dagger V\in\mathcal{L}^+\cap\mathcal{K}$ if and only if  $T^\dagger V\in\mathcal{L}^+\cap\mathcal{K}$. Now, let $0\neq\lambda$. Let us see that $\lambda\in\sigma(T^\dagger V)$ if and only if $\frac{\lambda}{1+\lambda}\in\sigma(U^\dagger V)$. In fact, 
\begin{eqnarray}
T^\dagger Vx=\lambda x &\leftrightarrow & (I-U^\dagger V)^{-1}U^\dagger  Vx=\lambda x \leftrightarrow U^\dagger  Vx= (I-U^\dagger  V) \lambda x\nonumber\\
&\leftrightarrow& U^\dagger  V(1+\lambda)x=\lambda x \leftrightarrow U^\dagger  Vx=\displaystyle\frac{\lambda}{1+\lambda} x. \nonumber 
\end{eqnarray}
Since that $T^\dagger V\in\mathcal{L}^+\cap\mathcal{K}$, then $\rho(T^\dagger V)\in \sigma(T^\dagger V)$ and $\displaystyle\frac{\lambda}{1+\lambda}$ achieves its maximum when $\lambda=\rho(T^\dagger V)$. Therefore $\rho(U^\dagger V)=\displaystyle\frac{\rho(T^\dagger V)}{1+\rho(T^\dagger V)}<1$.   
\end{proof}

\section{The polar proper  splitting}

The polar proper splitting for  rectangular matrices was defined in \cite{AG-splitting}. Next we extend this  particular proper splitting for  Hilbert space operators. 

\begin{defi} 
Consider $T\in \mathcal{L}(\mathcal{H})$ a closed range operator. The polar proper splitting of $T$ is $T=U_T-V$, where $U_T$ is the partial isometry of the polar decomposition of $T$.
\end{defi}

By Theorem \ref{condiciones suficientes de convergencia},  given $T\in\mathcal{L}(\mathcal{H})$ such that   $T=U-V$ is a proper splitting of $T$ and $U^\dagger V \in\mathcal{K}$, the positivity of $U^\dagger V$ is a sufficient condition to guarantee the convergence of the proper splitting of $T$. In the particular case that  $T=U_T-V$ is the polar proper splitting of $T$, note that $U_T^\dagger=U_T^*$. In the next result  we see that condition  $U_T^*V\in\mathcal{L}^+$ can be characterized by means of the norm of  $T$.

\begin{pro}
Consider $T\in\mathcal{L}(\mathcal{H})$ with closed range. If $T=U_T-V$ is the polar proper splitting of $T$ then the following assertions are equivalent:
\begin{enumerate}
\item $U_T^*V\in\mathcal{L}^+$;
\item $\|T\|\leq 1$;
\item $(P_{T}-|T^*|)^2\leq \lambda (|T^*|-P_T)$ for some $\lambda \geq 0$.
\end{enumerate}
\end{pro}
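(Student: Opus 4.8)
The plan is to exploit the special structure of the polar proper splitting, where $U=U_T$ is the partial isometry with $U_T^\dagger=U_T^*$, so that the abstract condition $U^\dagger V\in\mathcal{L}^+$ of the previous propositions becomes the concrete condition $U_T^*V\in\mathcal{L}^+$. The key computational identity I would establish first is an explicit formula for $U_T^*V$ in terms of $|T^*|=(TT^*)^{1/2}$ and the orthogonal projection $P_T$ onto $\mathcal{R}(T)$. Since $T=U_T|T|$ and $T^*=|T|U_T^*$, one has $U_T U_T^*=P_T$ and $U_T^*U_T=P_{T^*}$; from $V=U_T-T=U_T-U_T|T|$ I would compute $U_T^*V=U_T^*U_T-U_T^*U_T|T|=P_{T^*}-U_T^*U_T|T|$. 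It is cleaner, however, to pass through $|T^*|$: using $U_T|T|=T=|T^*|U_T$ and $U_T^*|T^*|=|T|U_T^*$, I would show $U_TU_T^*V=P_T-|T^*|$, i.e. $V=U_T(P_{T^*}-|T|)$ and hence $U_T^*V=P_{T^*}-|T|$, while after conjugating suitably the relevant positive operator on $\mathcal{R}(T)$ is $P_T-|T^*|$.

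For the equivalence $1\Leftrightarrow 2$, the plan is to reduce positivity of $U_T^*V$ to a statement about $|T|$. Since $U_T^*V=P_{T^*}-|T|$ and $|T|$ is a positive operator supported on $\mathcal{R}(|T|)=\mathcal{R}(T^*)=\mathcal{R}(P_{T^*})$, the operator $P_{T^*}-|T|$ is positive precisely when $|T|\leq P_{T^*}$ on that reducing subspace, which by the spectral theorem for the positive operator $|T|$ means exactly that $\sigma(|T|)\subseteq[0,1]$, i.e. $\||T|\|\leq 1$. Finally $\||T|\|=\|\,|T|\,\|=\|T\|$ because $\|T\|^2=\|T^*T\|=\||T|^2\|=\||T|\|^2$. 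This gives $U_T^*V\in\mathcal{L}^+\iff\|T\|\leq 1$.

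For the equivalence with item $3$, I would invoke the characterization already proved in Proposition \ref{polarpositivo}, specialized to $U=U_T$: condition $(6)$ there states that $U^\dagger T\in\mathcal{L}^h$ together with $(P_{T^*}-U^\dagger T)^2\leq\lambda(P_{T^*}-U^\dagger T)$ for some $\lambda\geq 0$ is equivalent to items $1$ and $2$ of that proposition. Substituting $U_T^\dagger=U_T^*$ and using $U_T^*T=U_T^*U_T|T|=|T|$ (which is automatically Hermitian, so the first clause is free), condition $(6)$ becomes $(P_{T^*}-|T|)^2\leq\lambda(P_{T^*}-|T|)$; conjugating by the partial isometry $U_T$ (which carries $\mathcal{R}(T^*)$ isometrically onto $\mathcal{R}(T)$ and sends $|T|$ to $|T^*|$ and $P_{T^*}$ to $P_T$) transports this inequality to $(P_T-|T^*|)^2\leq\lambda(|T^*|-P_T)$, exactly item $3$ as stated.

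I expect the main obstacle to be the bookkeeping of the intertwining relations between $|T|$, $|T^*|$, $P_T$, $P_{T^*}$ and $U_T$ so that the positivity inequality is transported to the correct subspace with the correct sign; in particular one must check that the unitary restriction $U_T\colon\mathcal{R}(T^*)\to\mathcal{R}(T)$ genuinely intertwines $|T|$ with $|T^*|$ and the two projections, and verify that the apparent sign discrepancy in item $3$ (namely $|T^*|-P_T$ on the right versus $(P_T-|T^*|)^2$ on the left) is consistent once one notes $\|T\|\leq 1$ forces $|T^*|\leq P_T$, making both sides sit inside $\mathcal{L}^+$.
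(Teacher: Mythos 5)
Your identity $U_T^*V=P_{T^*}-|T|$ and your treatment of $1\leftrightarrow 2$ are correct: for the converse direction you argue directly from the spectral theorem on the reducing subspace $\mathcal{R}(T^*)$, whereas the paper applies Proposition \ref{BLT} to get $|T|\leq P_{T^*}$; both work, and yours is the more elementary. (A minor slip: $U_TU_T^*V$ equals $V=(P_T-|T^*|)U_T$, not $P_T-|T^*|$; this does not propagate, since you immediately write the correct $V=U_T(P_{T^*}-|T|)$.)

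The gap is in $1\leftrightarrow 3$. Your plan --- specialize condition 6 of Proposition \ref{polarpositivo} to $U=U_T$, obtaining $(P_{T^*}-|T|)^2\leq\lambda(P_{T^*}-|T|)$, and conjugate by $U_T$ --- is sound, but the conjugation produces $(P_T-|T^*|)^2\leq\lambda\,(P_T-|T^*|)$, with $P_T-|T^*|$ on the right, not $|T^*|-P_T$ as in item 3. Your attempt to absorb the sign (``$\|T\|\leq 1$ forces $|T^*|\leq P_T$, making both sides sit inside $\mathcal{L}^+$'') does not work: $|T^*|\leq P_T$ makes $|T^*|-P_T$ \emph{negative} semidefinite, so item 3 as printed would force $(P_T-|T^*|)^2\leq 0$, i.e.\ $|T^*|=P_T$. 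Concretely, for $T=\frac{1}{2}I$ items 1 and 2 hold, yet $(P_T-|T^*|)^2=\frac{1}{4}I\not\leq\lambda(|T^*|-P_T)=-\frac{\lambda}{2}I$ for every $\lambda\geq 0$. What has happened is that the printed item 3 (and the paper's own proof, which writes $\lambda(|T^*|-P_T)=\lambda U_TV^*$ although $U_TV^*=P_T-|T^*|$) carries a sign slip; your computation actually lands on the corrected inequality $(P_T-|T^*|)^2\leq\lambda(P_T-|T^*|)$, which is what you should prove (or you should flag the misprint), rather than asserting agreement with the printed statement via an argument that fails. Aside from this, your route through Proposition \ref{polarpositivo} plus conjugation by $U_T$ is genuinely different from the paper's, which computes $VV^*=(P_T-|T^*|)^2$ and $U_TV^*$ directly and invokes Sebesty\'en's criterion for positive solvability of $U_TX=V$; the two arguments are of comparable length and rely on the same underlying facts.
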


\begin{proof}
	$1.\leftrightarrow 2.$ Recall that $U_T^*V=P_{T^*}-|T|$. If $U_T^*V\geq 0$ then $|T|\leq P _{T^*}$, so that $\|T\|=\| |T|\|\leq 1$. Conversely, if $\|T\|\leq 1$ then $\rho(|T|^{1/2}P_{T^*}|T|^{1/2})=\||T|^{1/2}P_{T^*}|T|^{1/2}\| =\||T|\|=\|T\|\leq 1$. Moreover, $\mathcal{R}(|T|^{1/2})=\mathcal{R}(P_{T^*})$. Then,  by Proposition \ref{BLT}, it follows  that $|T|\leq P_{T^*}$. Therefore, $U_T^*V\in  \mathcal{L}^+$.

$1.\leftrightarrow 3.$ If  $U_T^*V\in\mathcal{L}^+$ then the equation $U_TX=V$ has a positive solution. Then by \cite{Seb} or \cite[Theorem 2.3]{AG-PA}, we get that $VV^*\leq \lambda U_TV^*$ for some $\lambda \geq 0$. So that, $(P_T-|T^*|)^2\leq \lambda (|T^*|-P_T)$, for some $\lambda \geq 0$.  Conversely, since $VV^*=(P_T-|T^*|)^2\leq \lambda (|T^*|-P_T)=\lambda U_TV^*$, for some $\lambda \geq 0$ then $U_TV^*\geq 0$. So that $U_T^*U_TV^*U_T=V^*U_T\geq 0$. Then $U_T^*V\in\mathcal{L}^+$. 
\end{proof}

Now, we get a characterization of the convergence of the polar proper splitting. 

\begin{teo}\label{convergencia del polar}
Let  $T\in \mathcal{L}(\mathcal{H})$ be a closed range operator and $T=U_T-V$ the polar proper splitting of $T$. Then the following assertions are equivalent:
\begin{enumerate}
\item  The polar proper splitting of $T$  converges;
\item  $\|U_T-T\|<1$;
\item $\|P_{T^*}-|T|\|<1$;
\item $\|T\|<2$.
\end{enumerate}
\end{teo}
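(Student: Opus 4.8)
The plan is to reduce every one of the four conditions to a single statement about the Hermitian operator $P_{T^*}-|T|$, taking advantage of the fact that for the polar splitting the generalized inverse collapses to an adjoint, $U_T^\dagger=U_T^*$.

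First I would record the central identity $U_T^*V=P_{T^*}-|T|$, which follows by a one-line computation $U_T^*V=U_T^*(U_T-T)=U_T^*U_T-U_T^*U_T|T|=P_{T^*}-P_{T^*}|T|=P_{T^*}-|T|$ (it was already recalled in the previous proposition). By Corollary \ref{corrho}, the polar proper splitting converges if and only if $\rho(U_T^\dagger V)=\rho(U_T^*V)<1$. Now $U_T^*V=P_{T^*}-|T|$ is Hermitian, hence normal, so by the fact recalled in the preliminaries that $\rho=\|\cdot\|$ on normal operators we get $\rho(U_T^*V)=\|U_T^*V\|=\|P_{T^*}-|T|\|$. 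This gives $(1)\Leftrightarrow(3)$ at once.

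For $(2)\Leftrightarrow(3)$ I would factor $V=U_T-T=U_T-U_T|T|=U_T(P_{T^*}-|T|)$, using $U_TP_{T^*}=U_T$ (since $P_{T^*}$ projects onto the initial space $\mathcal{R}(T^*)=\mathcal{N}(U_T)^\perp$ of $U_T$). Because $T$ has closed range, $\mathcal{R}(|T|)=\mathcal{R}(T^*)$, so $P_{T^*}-|T|$ maps $\mathcal{H}$ into $\mathcal{R}(T^*)$, the subspace on which $U_T$ is isometric. Hence $\|Vx\|=\|U_T(P_{T^*}-|T|)x\|=\|(P_{T^*}-|T|)x\|$ for every $x$, so $\|U_T-T\|=\|V\|=\|P_{T^*}-|T|\|$; the two norms are literally equal, giving $(2)\Leftrightarrow(3)$.

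The real work is $(3)\Leftrightarrow(4)$. I would use the orthogonal decomposition $\mathcal{H}=\mathcal{R}(T^*)\oplus\mathcal{N}(T)$ (orthogonal precisely because $T$ has closed range). Both $P_{T^*}$ and $|T|$ reduce this decomposition: on $\mathcal{N}(T)$ the operator $P_{T^*}-|T|$ vanishes, and on $\mathcal{R}(T^*)$ it equals $I-A$, where $A:=|T|\big|_{\mathcal{R}(T^*)}$ is positive, bounded below by closed range, with $\sigma(A)\subseteq[\gamma,\|T\|]$ for some $\gamma>0$ and $\|A\|=\|T\|$. Therefore $\|P_{T^*}-|T|\|=\|I-A\|=\max_{s\in\sigma(A)}|1-s|$, which is $<1$ if and only if $0<s<2$ for all $s\in\sigma(A)$; the lower bound is automatic, so the condition collapses to $\max\sigma(A)=\|T\|<2$. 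Conversely, if $\|T\|\ge 2$ then $\|T\|\in\sigma(|T|)$ and $|1-\|T\||=\|T\|-1\ge 1$, so $(3)$ fails. I expect this last step to be the main obstacle, and its crux is the use of closed range to secure $\gamma>0$: without a spectral gap above $0$ the nonzero spectrum of $|T|$ could accumulate at $0$, forcing $\sup_s|1-s|=1$ and destroying the equivalence. Stringing the three equivalences together establishes that $(1)$, $(2)$, $(3)$ and $(4)$ are all equivalent.
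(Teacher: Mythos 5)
Your argument is correct, and the first two equivalences ($(1)\Leftrightarrow(3)$ via $U_T^*V=P_{T^*}-|T|\in\mathcal{L}^h$ and $\rho=\|\cdot\|$ on Hermitian operators, and $(2)\Leftrightarrow(3)$ via the factorization $V=U_T(P_{T^*}-|T|)$ with $U_T$ isometric on $\mathcal{R}(T^*)=\mathcal{R}(|T|)$) coincide with what the paper does. Where you genuinely diverge is the link to $\|T\|<2$: the paper invokes an external result (Lemma 2.1 of Chiumiento, \emph{On a conjecture by Mbekhta about best approximation by polar factors}), which gives $\|U_T-T\|=\max\{1-\gamma(T),\|T\|-1\}$ and concludes using $\gamma(T)>0$ for closed-range $T$; you instead reduce $P_{T^*}-|T|$ along the orthogonal decomposition $\mathcal{H}=\mathcal{R}(T^*)\oplus\mathcal{N}(T)$ to $I-A$ with $A=|T|\big|_{\mathcal{R}(T^*)}$ invertible and positive, and compute $\|I-A\|=\max_{s\in\sigma(A)}|1-s|$ by the spectral theorem. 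Your route is self-contained and in effect re-proves exactly the special case of Chiumiento's formula that is needed (your $\min\sigma(A)$ is precisely $\gamma(T)$), while the paper's citation is shorter; both hinge on the same point, correctly identified in your proposal, that closed range forces a spectral gap of $|T|$ above $0$, without which the spectrum could accumulate at $0$ and condition (3) would fail even for small $\|T\|$.
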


\begin{proof}
Since $U_T^*V=P_{T^*}-|T|\in   \mathcal{L}^h$ then $\rho(U_T^*V)=\|U_T^*V\|$. Now, the equivalences $1.\leftrightarrow 2. \leftrightarrow 3.$ follow from the fact that $\|U_T^*V\|=\|P_{T^*}-|T|\|=\|U_T^*(U_T-T)\|=\|U_T-T\|$. 
To prove equivalence $2. \leftrightarrow 4.$  observe that by \cite[Lemma 2.1]{Chium2021Onaconjeture} it holds that $\|U_T-T\|=max\{1-\gamma(T), \|T\|-1\}$, where $\gamma(T)$ denotes the reduced minimum modulus of $T$. Then the assertion follows noticing that $\gamma(T)>0$ because $T$ has closed range.
\end{proof}

In the next result we show that given $S,T\in\mathcal{L}(\mathcal{H})$ with closed range such that $S\overset{*}\leq T$ then the convergence of the polar proper splitting of $T$ guarantees the convergence of the polar proper splitting of $S$. In addition,  in this case, the polar proper splitting of $T$ induces the polar proper splitting of $S$.

\begin{teo}\label{polar sp orden estrella}
Consider $S,T\in\mathcal{L}(\mathcal H)$ with closed range such that $S\overset{*}{\leq}T$. If  the polar proper splitting of $T$ converges then the polar proper splitting of $S$ converges. Moreover, $\rho(U_S^*W)\leq \rho(U_T^*V)$, where $S=U_S-W$ and $T=U_T-V$ are the polar proper splitting of $S$ and $T$, respectively. In addition, the polar proper splitting of $S$ can be obtained from the polar proper splitting of $T$ as follows: $S=P_SU_T-P_SV$.
\end{teo}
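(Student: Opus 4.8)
The plan is to route all three assertions through a single structural identity,
\[
|S| \;=\; P_{S^*}\,|T|\,P_{S^*} \;=\; |T|\,P_{S^*} \;=\; P_{S^*}\,|T| ,
\]
which says precisely that the orthogonal projection $P_{S^*}$ onto $\mathcal{R}(S^*)$ commutes with $|T|$. First I would record the two elementary consequences of $S\overset{*}{\leq}T$ coming from Definition \ref{ordenes}(1): $S=P_ST$ and, taking adjoints in $S^*=P_{S^*}T^*$, $S=TP_{S^*}$. From $S=P_ST$ one gets $S^*S=(T^*P_S)S=T^*S$, so $T^*S=S^*S=|S|^2$. Hence $|T|^2P_{S^*}=T^*(TP_{S^*})=T^*S=|S|^2$, while from $S=TP_{S^*}$ one also has $|S|^2=S^*S=P_{S^*}|T|^2P_{S^*}$. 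Therefore $|T|^2P_{S^*}=P_{S^*}|T|^2P_{S^*}$; the right-hand side is Hermitian, so $|T|^2P_{S^*}$ equals its adjoint $P_{S^*}|T|^2$, i.e.\ $P_{S^*}$ commutes with $|T|^2$ and hence with $|T|=(|T|^2)^{1/2}$. Taking positive square roots in $|S|^2=(P_{S^*}|T|P_{S^*})^2$ then yields the displayed identity.

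With the linchpin in hand the induced splitting follows. I would check that $U_TP_{S^*}$ is a partial isometry with initial projection $(U_TP_{S^*})^*(U_TP_{S^*})=P_{S^*}U_T^*U_TP_{S^*}=P_{S^*}P_{T^*}P_{S^*}=P_{S^*}$ (using $P_{S^*}\leq P_{T^*}$, since $\mathcal{R}(S^*)\subseteq\mathcal{R}(T^*)$), and that $(U_TP_{S^*})|S|=U_T|T|P_{S^*}=TP_{S^*}=S$; by uniqueness of the polar decomposition this forces $U_S=U_TP_{S^*}$. Then, using $U_SU_S^*=P_S$ and $U_T^*U_T=P_{T^*}$, a one-line computation $P_SU_T=U_SU_S^*U_T=U_SP_{S^*}U_T^*U_T=U_SP_{S^*}P_{T^*}=U_S$ gives $P_SU_T=U_S$. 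Since $S=P_ST=P_SU_T-P_SV=U_S-P_SV$, the decomposition $S=P_SU_T-P_SV$ is exactly the polar proper splitting of $S$, with $W=U_S-S=P_SV$.

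The convergence claim is then immediate from Theorem \ref{convergencia del polar}: that theorem says the polar proper splitting of $T$ converges iff $\|T\|<2$, and $\|S\|=\|P_ST\|\leq\|T\|<2$, so the polar proper splitting of $S$ converges as well. For the comparison, recall that $U_T^*V=P_{T^*}-|T|$ and $U_S^*W=P_{S^*}-|S|$ are Hermitian, so their spectral radii coincide with their norms. The linchpin identity together with $P_{S^*}P_{T^*}P_{S^*}=P_{S^*}$ gives $P_{S^*}-|S|=P_{S^*}(P_{T^*}-|T|)P_{S^*}$, whence $\rho(U_S^*W)=\|P_{S^*}-|S|\|\leq\|P_{T^*}-|T|\|=\rho(U_T^*V)$.

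The main obstacle is establishing the linchpin identity, and specifically the non-obvious commutation $P_{S^*}|T|=|T|P_{S^*}$: everything downstream is routine algebra with projections and the uniqueness of the polar decomposition, but the commutation has to be extracted from the self-adjointness of $|T|^2P_{S^*}$, which in turn rests on the star-order relation $T^*S=S^*S$. Once that point is secured the proof is essentially bookkeeping.
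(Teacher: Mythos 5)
Your proof is correct, and the three norm computations at the end (convergence via $\|S\|=\|P_ST\|\le\|T\|<2$, the comparison via $P_{S^*}-|S|=P_{S^*}(P_{T^*}-|T|)P_{S^*}$, and $W=P_S(U_T-T)=P_SV$) are exactly the ones in the paper. The genuine difference is upstream: the paper obtains the two key identities $|S|=P_{S^*}|T|$ and $U_S=P_SU_T=U_TP_{S^*}$ by citing the fact (Theorem 2.15 of \cite{AntCanSto2010}) that $S\overset{*}{\leq}T$ implies $|S|\overset{*}{\leq}|T|$ and $U_S\overset{*}{\leq}U_T$, whereas you derive them from scratch. Your derivation is sound: from $S=P_ST$ and $S=TP_{S^*}$ you get $|T|^2P_{S^*}=T^*S=|S|^2=P_{S^*}|T|^2P_{S^*}$, the self-adjointness of the right-hand side forces $P_{S^*}$ to commute with $|T|^2$ and hence with $|T|$, and uniqueness of positive square roots and of the polar decomposition (with the kernel normalization $\mathcal{N}(U)=\mathcal{N}(S)$, which your partial isometry $U_TP_{S^*}$ does satisfy because $\mathcal{R}(S^*)\subseteq\mathcal{R}(T^*)$ kills the possibility of $P_{S^*}x$ landing in $\mathcal{N}(U_T)=\mathcal{N}(T)$) then yield the linchpin identity and $U_S=U_TP_{S^*}$. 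What your route buys is self-containedness --- the reader does not need the external result on the star order of polar factors --- at the cost of a paragraph of computation; what the paper's route buys is brevity and the conceptual statement that $T\mapsto(|T|,U_T)$ is monotone for $\overset{*}{\leq}$, which is reused implicitly elsewhere.
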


\begin{proof}
If $S\overset{*}{\leq}T$ then $S=P_ST$. Now, if the polar proper splitting of $T$ converges then, by Theorem \ref{convergencia del polar}, we get that $\|T\|<2$. Then, $\|S\|=\|P_ST\|\leq \|P_S\|\|T\|=\|T\|<2$ and so that, the polar proper splitting of $S$ converges.  Moreover, since  $S\overset{*}{\leq}T$ then by \cite[Theorem 2.15]{AntCanSto2010}, it holds that  $|S|\overset{*}{\leq}|T|$. Therefore,  $|S|=P_{S^*}|T|$. Then $\rho(U_S^*W)=\|P_{S^*}-|S|\|=\|P_{S^*}-P_{S^*}|T|\|=\|P_{S^*}(I-|T|)\|=\|P_{S^*}P_{T^*}(I-|T|)\|\leq\|P_{T^*}(I-|T|)\|=\rho(U_T^*V)<1$. Finally, since $S\overset{*}\leq T$ then, again by \cite[Theorem 2.15]{AntCanSto2010}, it holds that $U_S\overset{*}\leq U_T$ i.e., $U_S=P_SU_T=U_TP_{S^*}$.  
Now,  $S=U_S-W=P_SU_T-W$ then $W=P_SU_T-P_ST=P_S(U_T-T)=P_SV$. Hence the assertion follows.
\end{proof}

\begin{rem}
In Theorem \ref{polar sp orden estrella} the convergence of the polar proper splitting of $S$ can be strictly faster than the convergence of the polar proper splitting of $T$. In fact, consider $\mathcal{H}=
\mathbb{C}^3$,  $S=\left(\begin{matrix}
 0& 0 & 0\\
0 & 1/2 & 0\\
0 & 1/2 & 0
\end{matrix}\right)$ and $T=\left(\begin{matrix}
 1/2 & 0 & 0\\
0 & 1/2 & 0\\
0 & 1/2 & 0
\end{matrix}\right)$. Since $\mathcal{R}(T)=\mathcal{R}(S)\oplus\mathcal{R}(T-S)$ and $\mathcal{R}(T^*)=\mathcal{R}(S^*)\oplus\mathcal{R}(T^*-S^*)$ then $S\overset{*}\leq T$. Now, the polar proper splitting of $S$ is $S=U_S-W=\left(\begin{matrix}
 0& 0 & 0\\
0 & \sqrt{2}/2 & 0\\
0 & \sqrt{2}/2 & 0
\end{matrix}\right) - \left(\begin{matrix}
 0& 0 & 0\\
0 & \sqrt{2}/2 -1/2 & 0\\
0 & \sqrt{2}/2 - 1/2 & 0
\end{matrix}\right)$ and the polar splitting of $T$ is $T=U_T-V=\left(\begin{matrix}
 1& 0 & 0\\
0 & \sqrt{2}/2 & 0\\
0 & \sqrt{2}/2 & 0
\end{matrix}\right) - \left(\begin{matrix}
 1/2 & 0 & 0\\
0 & \sqrt{2}/2 -1/2 & 0\\
0 & \sqrt{2}/2 -1/2 & 0
\end{matrix}\right)$. Then it holds that $\rho(U_S^*W)=\frac{2-\sqrt{2}}{2}\lneq\rho(U_T^*V)=\frac{1}{2}$.

\end{rem}

\begin{rem}
 Condition  $S\overset{*}{\leq}T$ in Theorem \ref{polar sp orden estrella} can not be replaced by a weaker order condition like  $S\overset{-}{\leq}T$. In fact, consider $\mathcal{H}=\mathbb{C}^3$. Take 
$S=\left(\begin{matrix}
1 & 0 & 0\\
2 & 0 & 0\\
0 & 0 & 0
\end{matrix}\right)$ and $T=\left(\begin{matrix}
1 & 0 & 0\\
0 & 1/2 & 0\\
0 & 0 & 0
\end{matrix}\right)$. It is clear that $\mathcal{R}(T)=\mathcal{R}(S)\dot+\mathcal{R}(T-S)$ and $\mathcal{R}(T)=\mathcal{R}(S^*)\dot+\mathcal{R}(T-S^*)$ (remember that $T=T^*$).  So that, $S\overset{-}{\leq}T$ but they are not related with the star order. Now, since $P_T-|T|=\left(\begin{matrix}
1 & 0 & 0\\
0 & 1& 0\\
0 & 0 & 0
\end{matrix}\right) - 
\left(\begin{matrix}
1 & 0 & 0\\
0 & 1/2 & 0\\
0 & 0 & 0
\end{matrix}\right)=
\left(\begin{matrix}
0& 0 & 0\\
0 & 1/2 & 0\\
0 & 0 & 0
\end{matrix}\right)$ then $\|P_T-|T|\|<1$ and thus the polar proper splitting of $T$ converges. On the other hand, $P_{S^*}-|S|=
\left(\begin{matrix}
1 & 0 & 0\\
0 & 0& 0\\
0 & 0 & 0
\end{matrix}\right) - 
\left(\begin{matrix}
\sqrt{5}& 0 & 0\\
0 & 0 & 0\\
0 & 0 & 0
\end{matrix}\right)=
\left(\begin{matrix}
1-\sqrt{5}& 0 & 0\\
0 & 0 & 0\\
0 & 0 & 0
\end{matrix}\right)$. Then $\|P_{S^*}-|S|\|=|1-\sqrt{5}|>1$. Then, the polar proper splitting of $S$ does not converge.
\end{rem}

\begin{rem}
Condition  $S\overset{*}{\leq}T$ in Theorem \ref{polar sp orden estrella} can not be replaced by condition  $S\overset{\sharp}{\leq}T$. In fact, consider  $T= \left(\begin{matrix}
3/2& 0 & 0\\
0 & 0 &  0\\
0 & 0 &3/2
\end{matrix}\right)$ and 
$S=\left(\begin{matrix}
3/2& 0 & 9/2\\
0 & 0 &  0\\
0 & 0 &0
\end{matrix}\right)$.
Then  $S\overset{\sharp}{\leq}T$ because $S=QT=TQ$, where 
 $Q= \left(\begin{matrix}
1&0 & 3\\
0 & 0 &0\\
0 & 0 &0
\end{matrix}\right)$. Also, note that  the polar proper splitting of $T$ converges because $\|T\|=\frac{3}{2}<2$. However, the polar proper splitting of $S$ does not converge because $\|S\|>2$.
\end{rem}

\begin{pro}
Let $S\in\mathcal{P}\cdot\mathcal{L}^h$. If $S=P_ST$, where $T\in\mathcal{L}^h$ and the polar proper splitting of $T$ converges then the polar proper splitting of $S$ converges. Conversely, if the polar proper splitting of $S$ converges then there exists $T_0\in\mathcal{L}^h$ such that $S=P_ST_0$ and the polar proper splitting of $T_0$ converges.
\end{pro}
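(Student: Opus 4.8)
The plan is to reduce both implications to the norm criterion of Theorem~\ref{convergencia del polar}, which says that the polar proper splitting of a closed range operator $X$ converges if and only if $\|X\|<2$. Granting this, the first implication is immediate: if $S=P_ST$ with $T\in\mathcal{L}^h$ whose polar proper splitting converges, then $\|T\|<2$, and since $P_S$ is an orthogonal projection,
$$\|S\|=\|P_ST\|\le\|P_S\|\,\|T\|=\|T\|<2,$$
so by Theorem~\ref{convergencia del polar} the polar proper splitting of $S$ converges.

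For the converse, the only consequence of the hypothesis $S\in\mathcal{P}\cdot\mathcal{L}^h$ I would use is that the compression of $S$ to its own range is self-adjoint. Writing $S=PH$ with $P\in\mathcal{P}$ and $H\in\mathcal{L}^h$ gives $\mathcal{R}(S)\subseteq\mathcal{R}(P)$, hence $P_S\le P$ and $P_SP=P_S$, so that
$$P_SSP_S=P_S(PH)P_S=(P_SP)HP_S=P_SHP_S,$$
which is self-adjoint because $H=H^*$. Decomposing $\mathcal{H}=\mathcal{R}(S)\oplus\mathcal{R}(S)^\perp$ and using $(I-P_S)S=0$, this means that in block form
$$S=\begin{pmatrix}A&B\\0&0\end{pmatrix},\qquad A=P_SSP_S=A^*,\quad B=P_SS(I-P_S),$$
and the Hermitian operators $T_0$ with $P_ST_0=S$ are exactly the self-adjoint completions $T_0=\begin{pmatrix}A&B\\B^*&X\end{pmatrix}$ with $X=X^*$. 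Each such $T_0$ satisfies $P_ST_0=S$ automatically, so the whole problem is to choose the free corner $X$ so that $\|T_0\|<2$.

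This last point is the main obstacle. The naive symmetric choice $X=0$, i.e.\ $T_0=S+S^*-P_SSP_S$, need not work: its norm can exceed $2$ even when $\|S\|<2$ (already with $\mathcal{R}(S)$ and $\mathcal{R}(S)^\perp$ one dimensional and $A,B$ suitably chosen one gets $\|T_0\|>2>\|S\|$). To control the norm I would invoke the self-adjoint version of Parrott's theorem, namely the Davis--Kahan--Weinberger theorem on norm-preserving Hermitian completions. Since $A=A^*$ the two Parrott quantities coincide,
$$\left\|\begin{pmatrix}A&B\end{pmatrix}\right\|=\left\|\begin{pmatrix}A\\B^*\end{pmatrix}\right\|=\|P_SS\|=\|S\|,$$
so there is a self-adjoint $X$ attaining $\|T_0\|=\|S\|$. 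As the polar proper splitting of $S$ converges we have $\|S\|<2$, whence $\|T_0\|=\|S\|<2$ and Theorem~\ref{convergencia del polar} gives the convergence of the polar proper splitting of $T_0$. One remaining infinite-dimensional technicality is that $T_0$ must have closed range for its polar proper splitting to be defined; this I would secure by exploiting the freedom in $X$ (taking it supported on $\overline{\mathcal{R}(B^*)}$ and $0$ on $\mathcal{N}(B)\cap\mathcal{R}(S)^\perp$, and if necessary perturbing within a norm budget $\gamma\in(\|S\|,2)$ to open a spectral gap of $T_0$ at $0$). In finite dimensions, the setting of the motivating examples, closed range is automatic, and the completion $S=P_ST_0$ with $\|T_0\|=\|S\|<2$ finishes the proof.
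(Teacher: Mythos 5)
Your proof is correct, and its forward direction is essentially the paper's: both reduce to $\|S\|\leq\|T\|<2$ and invoke Theorem \ref{convergencia del polar}. For the converse, however, you take a genuinely different route. The paper simply cites \cite[Theorem 3.2]{AG-PB}, which asserts that $\|S\|=\min\{\|B\|:\ B\in\mathcal{L}^h,\ S=P_SB\}$ with the minimum attained, and takes $T_0$ to be a minimizer. You instead re-derive this fact: you use the hypothesis $S\in\mathcal{P}\cdot\mathcal{L}^h$ to show that $A=P_SSP_S$ is Hermitian, identify the admissible $T_0$ with the Hermitian completions $\left(\begin{smallmatrix}A&B\\B^*&X\end{smallmatrix}\right)$ relative to $\mathcal{H}=\mathcal{R}(S)\oplus\mathcal{R}(S)^\perp$, and apply the Davis--Kahan--Weinberger norm-preserving Hermitian completion theorem to produce $T_0$ with $\|T_0\|=\|S\|<2$. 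This buys a self-contained argument (and your observation that the naive completion $X=0$, i.e.\ $T_0=S+S^*-P_SSP_S$, can have norm exceeding $2$ correctly explains why some such completion theorem is genuinely needed), at the cost of importing DKW in place of the cited lemma. Your closing concern about the closed range of $T_0$ is legitimate, since the polar proper splitting is only defined for closed range operators, and the perturbation sketch you give to handle it is the least rigorous step of your write-up; but note that the paper's own proof does not address this point at all, so this is a gap you share with, rather than introduce relative to, the published argument.
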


\begin{proof}
Let $S=P_ST$, where $T\in \mathcal{L}^h$. Consider the set $\mathcal{A}_S=\{B\in\mathcal{L}^h: S=P_SB\}$. By \cite[Theorem 3.2]{AG-PB},  $\|S\|=\underset{B\in\mathcal{A}_S}{\min}\|B\|$. Therefore, if the polar proper splitting of $T$ then, by Theorem \ref{convergencia del polar}, $\|S\|\leq\|T\|<2$ and  so, the polar proper splitting of $S$ converges. Conversely, if the polar proper splitting of $S$ converges then $\|S\|<2$. Then applying again \cite[Theorem 3.2]{AG-PB}, there exists $T_0\in\mathcal{A}_S$ such that $\|T_0\|=\|S\|$. Therefore, the polar proper splitting of $T_0$ converges.
\end{proof}

\section{Splittings for split operators}

In this section we put the focus on splittings of split operators. Recall that $T\in \mathcal{L}(\mathcal H)$ is a split operator if $T$ has closed range and $\mathcal{R}(T)\dot + \mathcal{N}(T)=\mathcal{H}$.  For this class of operators we consider the following proper splittings:

\begin{defi}
Given a split operator  $T\in \mathcal{L}(\mathcal H)$, we say that:
\begin{enumerate}
\item $T=Q_T-V$ is the $Q_T$-proper splitting of $T$, where $Q_T=Q_{\mathcal{R}(T)//\mathcal{N}(T)}$.
\item $T=T^\sharp-V$ is the group proper splitting of $T$, where $T^\sharp$ is the group inverse of $T$.
\end{enumerate}
\end{defi}

\begin{pro}\label{Q-proper}
Let  $T\in \mathcal{L}(\mathcal{H})$ be a split operator such that $\|P_{T^*}(I-T)\|<1$. Then the $Q_T$-proper splitting of $T$ converges. In addition, if $P_{T^*}T\in\mathcal{L}^h$ and the $Q_T$-proper splitting of $T$ converges then $\|P_{T^*}(I-T)\|<1$.
\end{pro}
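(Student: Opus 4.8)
The plan is to reduce the whole statement to a single operator identity and then invoke the spectral-radius characterization of convergence. By Corollary~\ref{corrho}, applied to the proper splitting $T=Q_T-V$ with $U=Q_T$ (this is genuinely proper, since $\mathcal{R}(Q_T)=\mathcal{R}(T)$ and $\mathcal{N}(Q_T)=\mathcal{N}(T)$), the $Q_T$-proper splitting of $T$ converges if and only if $\rho(Q_T^\dagger V)<1$. So the crux is to compute $Q_T^\dagger V$ in closed form.

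The key step I would carry out first is the identity $Q_T^\dagger V=P_{T^*}(I-T)$. Writing $V=Q_T-T$, I split $Q_T^\dagger V=Q_T^\dagger Q_T-Q_T^\dagger T$ and handle each term. For the first, since $Q_T$ is idempotent with $\mathcal{N}(Q_T)=\mathcal{N}(T)$ and $T$ has closed range, $Q_T^\dagger Q_T=P_{\mathcal{R}(Q_T^*)}=P_{\mathcal{N}(Q_T)^\bot}=P_{\mathcal{N}(T)^\bot}=P_{T^*}$. For the second, the equality $\mathcal{R}(T)=\mathcal{R}(Q_T)$ together with the fact that $Q_T$ restricts to the identity on its range gives $Q_TT=T$, whence $Q_T^\dagger T=Q_T^\dagger Q_T T=P_{T^*}T$. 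Combining the two, $Q_T^\dagger V=P_{T^*}-P_{T^*}T=P_{T^*}(I-T)$. This is the only computation in the argument and the place where the structure of $Q_T$ is really used; I expect it to be routine once the two facts $Q_T^\dagger Q_T=P_{T^*}$ and $Q_TT=T$ are established.

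With the identity in hand, the sufficient direction is immediate. Because $\rho(\cdot)\leq\|\cdot\|$ always holds, the hypothesis $\|P_{T^*}(I-T)\|<1$ gives $\rho(Q_T^\dagger V)=\rho(P_{T^*}(I-T))\leq\|P_{T^*}(I-T)\|<1$, so the $Q_T$-proper splitting converges by Corollary~\ref{corrho}.

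For the converse I would bring in the extra hypothesis $P_{T^*}T\in\mathcal{L}^h$. Then $Q_T^\dagger V=P_{T^*}-P_{T^*}T$ is a difference of Hermitian operators, hence Hermitian, and for a Hermitian (in particular normal) operator the spectral radius coincides with the norm. Thus convergence, which by Corollary~\ref{corrho} yields $\rho(P_{T^*}(I-T))<1$, forces $\|P_{T^*}(I-T)\|=\rho(P_{T^*}(I-T))<1$. The only point to verify here is the Hermiticity of the difference, which is automatic since $P_{T^*}$ is an orthogonal projection; I do not anticipate any genuine obstacle beyond the bookkeeping of the initial identity.
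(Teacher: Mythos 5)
Your proof is correct and follows essentially the same route as the paper: both reduce everything to the identity $Q_T^\dagger V=P_{T^*}(I-T)$ and then use $\rho(\cdot)\leq\|\cdot\|$ for sufficiency and $\rho(\cdot)=\|\cdot\|$ for Hermitian operators for the converse. The only (harmless) difference is that you derive the identity from the elementary facts $Q_T^\dagger Q_T=P_{\mathcal{N}(T)^\perp}$ and $Q_TT=T$, whereas the paper invokes the formula $Q_T^\dagger=P_{T^*}P_T$ from the Corach--Maestripieri polar decomposition of oblique projections.
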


\begin{proof}
Let $T=Q_T-V$ be the $Q_T$ proper splitting of $T$. By  \cite[Theorem 4.1]{MR2653816}, it holds that $Q_T^\dagger V=P_{T^*}P_TV=P_{T^*}V=P_{T^*}(Q_T-T)=P_{T^*}-P_{T^*}T$. Now, since $\rho(Q_T^\dagger V)\leq \|P_{T^*}(I-T)\|<1$  then $T=Q_T-V$ converges. The converse follows from the above argument plus the fact that since  $Q_T^\dagger V\in\mathcal{L}^h$ then it holds $\rho(Q_T^\dagger V)=\|Q_T^\dagger V\|$.
\end{proof}

Next, we study conditions for the  convergence of the $Q_T$-proper splitting for different classes of operators. Namely, we will consider the sets
$\mathcal{P}\cdot\mathcal{P}=\{T\in\mathcal{L}(\mathcal{H}): T=PQ,  \ \textrm{for}  \ P,Q\in\mathcal{P}\}$, $\mathcal{P}\cdot\mathcal{L}^+=\{T\in\mathcal{L}(\mathcal{H}): \ T=PA, \ \textrm{for} \ P\in\mathcal{P} \  \textrm{and} \ A\in\mathcal{L}^+\}$ and $\mathcal{P}\cdot\mathcal{L}^h=\{T\in\mathcal{L}(\mathcal{H}): \ T=PB, \ \textrm{for} \ P\in\mathcal{P} \  \textrm{and} \ B\in\mathcal{L}^h\}$.  The sets $\mathcal{P}\cdot\mathcal{P}$, $\mathcal{P}\cdot\mathcal{L}^+$ and  $\mathcal{P}\cdot\mathcal{L}^h$ have been studied in \cite{CM}, \cite{AG-PA} and  \cite{AG-PB}, respectively. From the definitions is evident that the following inclusions hold: $\mathcal{P}\cdot\mathcal{P} \subseteq\mathcal{P}\cdot\mathcal{L}^+\subseteq \mathcal{P}\cdot\mathcal{L}^h$. Moreover, these inclusions are strict in general \cite{AG-PA, AG-PB}. 
They are not subsets of  the class of split operators, in general. However, under certain extra conditions, their elements are split operators. In fact, it holds that: $T\in\mathcal{P}\cdot\mathcal{P}$ is a split operator if and only if $\mathcal{R}(T)$ is closed   \cite[Theorem 3.2]{CM};  if $\mathcal{R}(T)$ is closed  then $T\in\mathcal{P}\cdot\mathcal{L}^+$ if and only if $T$ is a split operator and $TP_T\in\mathcal{L}^+$ \cite[Theorem 3.3]{AG-PA};  if $\mathcal{R}(T)$ is closed and $T\in\mathcal{P}\cdot \mathcal{L}^h$ then, there  exists $B\in\mathcal{L}^h$ with $\mathcal{N}(B)=\mathcal{N}(T)$ such that $T=P_TB$ if and only if $T$ is a split operator \cite[Corollary 2.13]{AG-PB}.

\begin{cor} The following assertions hold:
\begin{enumerate}
\item Let  $T\in \mathcal{L}(\mathcal{H})$ be  a split operator such that $T^*\in\mathcal{P}\cdot\mathcal{L}^h$. Then, the $Q_T$-proper splitting of $T$ converges if and only if $\|P_{T^*}(I-T)\|<1$.
\item Let $T\in \mathcal{L}(\mathcal{H})$ be with closed range such that $T^*\in\mathcal{P}\cdot\mathcal{L}^+$. Then, the $Q_T$-proper splitting of $T$ converges if and only if $\|P_{T^*}(I-T)\|<1$.
\item Let $T\in \mathcal{L}(\mathcal{H})$ be with closed range such that $T\in\mathcal{P}\cdot\mathcal{P}$. Then, the $Q_T$-proper splitting of $T$ converges.
\end{enumerate}
\end{cor}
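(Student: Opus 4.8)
The plan is to reduce all three items to Proposition~\ref{Q-proper}. Recall that for the $Q_T$-proper splitting one has $Q_T^\dagger V=P_{T^*}(I-T)=P_{T^*}-P_{T^*}T$, so the forward implication of Proposition~\ref{Q-proper} gives convergence as soon as $\|P_{T^*}(I-T)\|<1$, while its converse needs only that $P_{T^*}T$ be Hermitian. Thus the heart of items 1 and 2 is to verify $P_{T^*}T\in\mathcal{L}^h$, whereas item 3 additionally forces the strict inequality $\|P_{T^*}(I-T)\|<1$.

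For item 1, I would first use that $T$ split is equivalent to $T^*$ split (a group inverse of $T$ exists iff one of $T^*$ does, by the remarks after Theorem~\ref{group inverse conditions}) and that $\mathcal{R}(T^*)$ is closed. Since $T^*\in\mathcal{P}\cdot\mathcal{L}^h$ is then split with closed range, the characterization \cite[Corollary 2.13]{AG-PB} applied to $T^*$ yields $B\in\mathcal{L}^h$ with $\mathcal{N}(B)=\mathcal{N}(T^*)$ and $T^*=P_{T^*}B$. Taking adjoints (both $B$ and $P_{T^*}$ are Hermitian) gives $T=BP_{T^*}$, whence $P_{T^*}T=P_{T^*}BP_{T^*}\in\mathcal{L}^h$. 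Proposition~\ref{Q-proper} now supplies both implications, so convergence holds iff $\|P_{T^*}(I-T)\|<1$.

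For item 2, I would invoke $\mathcal{P}\cdot\mathcal{L}^+\subseteq\mathcal{P}\cdot\mathcal{L}^h$ together with \cite[Theorem 3.3]{AG-PA} applied to $T^*$: since $\mathcal{R}(T^*)$ is closed and $T^*\in\mathcal{P}\cdot\mathcal{L}^+$, the operator $T^*$ is split (hence so is $T$) and moreover $T^*P_{T^*}\in\mathcal{L}^+$. As a positive operator is self-adjoint, $P_{T^*}T=(T^*P_{T^*})^*=T^*P_{T^*}\in\mathcal{L}^+$; in particular $P_{T^*}T\in\mathcal{L}^h$, so item 1 applies and gives the equivalence. I would record the sharper fact $P_{T^*}T\in\mathcal{L}^+$, as item 3 needs it.

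For item 3, \cite[Theorem 3.2]{CM} shows that a closed-range $T\in\mathcal{P}\cdot\mathcal{P}$ is split, and since $T^*\in\mathcal{P}\cdot\mathcal{P}\subseteq\mathcal{P}\cdot\mathcal{L}^+$ item 2 applies, giving $P_{T^*}T\in\mathcal{L}^+$ and reducing everything to proving $\|P_{T^*}(I-T)\|<1$. Writing $T=PQ$ with $P,Q\in\mathcal{P}$ yields $\|T\|\le1$, so $0\le P_{T^*}T$ with $\|P_{T^*}T\|\le1$. The operator $P_{T^*}(I-T)$ is self-adjoint, vanishes on $\mathcal{N}(T)=\mathcal{R}(T^*)^\perp$, and has range inside $\mathcal{R}(T^*)$; hence, with respect to $\mathcal{H}=\mathcal{R}(T^*)\oplus\mathcal{N}(T)$, it is the block $I-B$ on $\mathcal{R}(T^*)$ and $0$ on $\mathcal{N}(T)$, where $B:=P_{T^*}T|_{\mathcal{R}(T^*)}$ is positive with $\|B\|\le1$. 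The crux — and the step I expect to be the main obstacle — is showing $B$ is bounded below, i.e. $0\notin\sigma(B)$. Here I would use that $T$ split makes $P_{T^*}$ restrict to a topological isomorphism of $\mathcal{R}(T)$ onto $\mathcal{R}(T^*)=\mathcal{N}(T)^\perp$ (the orthogonal projection onto the orthocomplement of $\mathcal{N}(T)$ is an isomorphism on the topological complement $\mathcal{R}(T)$), so that $\mathcal{R}(P_{T^*}T)=P_{T^*}\mathcal{R}(T)=\mathcal{R}(T^*)$ is closed; then the positive operator $B$ is a bijection of $\mathcal{R}(T^*)$, hence bounded below, say $\sigma(B)\subseteq[m,1]$ with $m>0$. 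Consequently $\|P_{T^*}(I-T)\|=\|I-B\|=1-m<1$, and Proposition~\ref{Q-proper} delivers convergence.
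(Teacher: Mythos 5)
Your proposal is correct. For items 1 and 2 you follow essentially the paper's route: everything is reduced to Proposition \ref{Q-proper}, and the structural results of \cite{AG-PB} and \cite{AG-PA} applied to $T^*$ are used only to certify that $P_{T^*}T\in\mathcal{L}^h$ (indeed $P_{T^*}T\in\mathcal{L}^+$ in item 2), which is exactly the extra hypothesis the converse direction of Proposition \ref{Q-proper} requires; the paper dispatches this in one line of citations and you merely fill in the adjoint-taking details. For item 3, however, your argument is genuinely different. The paper invokes the canonical form $T=P_TP_{T^*}$ of a closed-range product of projections \cite[Theorem 3.1]{CM}, computes $Q_T^\dagger V=P_{T^*}P_{\mathcal{N}(T^*)}P_{T^*}$, and gets $\|P_{T^*}P_{\mathcal{N}(T^*)}P_{T^*}\|<1$ from Deutsch's results on the angle between subspaces \cite{Deu} combined with the split property from \cite[Theorem 3.2]{CM}. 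You instead exploit the positivity of $P_{T^*}T$ established in item 2: writing $P_{T^*}(I-T)$ in block form as $I-B$ on $\mathcal{R}(T^*)$ and $0$ on $\mathcal{N}(T)$ with $0\leq B\leq I$, you show $B$ is bounded below because $P_{T^*}$ restricts to a topological isomorphism of $\mathcal{R}(T)$ onto $\mathcal{R}(T^*)=\mathcal{N}(T)^\bot$ (a consequence of $\mathcal{R}(T)\dot+\mathcal{N}(T)=\mathcal{H}$ and the open mapping theorem), so that $\mathcal{R}(B)=\mathcal{R}(T^*)$ and the self-adjoint surjection $B$ is invertible, whence $\sigma(B)\subseteq[m,1]$ with $m>0$ and $\|I-B\|=1-m<1$. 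Both proofs ultimately rest on the same geometric fact, namely the positivity of the angle encoded in the direct sum $\mathcal{R}(T)\dot+\mathcal{N}(T)=\mathcal{H}$; the paper's version is shorter because it outsources this to the angle lemmas of \cite{Deu}, while yours is self-contained at the cost of the additional (correct) open-mapping and spectral argument, and it has the small advantage of not needing the explicit representation $T=P_TP_{T^*}$.
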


\begin{proof}
1. It follows from Proposition \ref{Q-proper} and by \cite[Theorem 2.2]{AG-PB}.

2. It follows from Proposition \ref{Q-proper} and by \cite[Theorem 3.3]{AG-PA}.

3. If $T\in\mathcal{P}\cdot\mathcal{P}$ then  $T=P_TP_{T^*}$, see \cite[Theorem 3.1]{CM}. So that, $Q_T^\dagger V=P_{T^*}(I-P_TP_{T^*})$. Now, $\|P_{T^*}(I-P_TP_{T^*})\|=\|P_{T^*}P_{\mathcal{N}(T^*)}P_{T^*}\|<1$, where the inequality  holds by \cite[Lemma 10 and Theorem 12]{Deu} and  by \cite[Theorem 3.2]{CM}.
\end{proof}

\begin{pro}\label{group-proper}
Let  $T\in \mathcal{L}(\mathcal H)$ be  a split operator. If $\|P_{T^*}(I-T^2)\|<1$  then, the group proper splitting $T=T^\sharp -V$ converges. For the converse, if $P_{T^*}T^2\in\mathcal{L}^h$ and the group proper splitting of $T$ converges then $\|P_{T^*}(I-T^2)\|<1$.
\end{pro}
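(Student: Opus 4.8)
The plan is to follow the same scheme as in Proposition \ref{Q-proper}, replacing the computation of $Q_T^\dagger V$ by an analogous computation of $(T^\sharp)^\dagger V$. First I would note that $T=T^\sharp-V$ is indeed a proper splitting, since $\mathcal{R}(T^\sharp)=\mathcal{R}(T)$ and $\mathcal{N}(T^\sharp)=\mathcal{N}(T)$, and that $T^\sharp$ has closed range. Hence Corollary \ref{corrho} applies, and the group proper splitting converges if and only if $\rho((T^\sharp)^\dagger V)<1$. Everything therefore reduces to identifying $(T^\sharp)^\dagger V$ and controlling its spectral radius.

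The key step, and the only delicate one, is to establish the identity
\[
(T^\sharp)^\dagger V = P_{T^*}(I-T^2).
\]
Rather than compute $(T^\sharp)^\dagger$ explicitly (it has no convenient closed form), I would factor $V$ through $T^\sharp$. Using $Q_T=TT^\sharp=T^\sharp T$ together with $T^\sharp Q_T=T^\sharp T T^\sharp=T^\sharp$ and $T^\sharp T^2=Q_T T=T$, one obtains $V=T^\sharp-T=T^\sharp(Q_T-T^2)$. Since $(T^\sharp)^\dagger T^\sharp=P_{\mathcal{R}((T^\sharp)^*)}=P_{\mathcal{N}(T)^\perp}=P_{T^*}$, this gives $(T^\sharp)^\dagger V=P_{T^*}(Q_T-T^2)$. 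Finally, because $Q_T^*$ is the orthogonal-along-$\mathcal{R}(T)^\perp$ projection onto $\mathcal{R}(T^*)$, it fixes $\mathcal{R}(T^*)$, so $Q_T^*P_{T^*}=P_{T^*}$ and hence $P_{T^*}Q_T=(Q_T^*P_{T^*})^*=P_{T^*}$. The displayed identity follows.

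With the identity in hand both implications are immediate and mirror the $Q_T$-proper splitting case. For the forward implication, $\rho((T^\sharp)^\dagger V)\le\|(T^\sharp)^\dagger V\|=\|P_{T^*}(I-T^2)\|<1$, so Corollary \ref{corrho} yields convergence. For the converse, the hypothesis $P_{T^*}T^2\in\mathcal{L}^h$ makes $(T^\sharp)^\dagger V=P_{T^*}-P_{T^*}T^2$ Hermitian, hence normal, so that $\rho((T^\sharp)^\dagger V)=\|(T^\sharp)^\dagger V\|=\|P_{T^*}(I-T^2)\|$; convergence forces $\rho((T^\sharp)^\dagger V)<1$ by Corollary \ref{corrho}, giving $\|P_{T^*}(I-T^2)\|<1$.

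I expect the main obstacle to be precisely the algebraic factorization $V=T^\sharp(Q_T-T^2)$ and the verification of the projection identities $(T^\sharp)^\dagger T^\sharp=P_{T^*}$ and $P_{T^*}Q_T=P_{T^*}$. Once these are settled the remaining argument is a verbatim analogue of Proposition \ref{Q-proper}, relying only on the fact that $\rho=\|\cdot\|$ for Hermitian (normal) operators.
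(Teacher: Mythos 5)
Your proposal is correct and follows essentially the same route as the paper: both arguments hinge on the identity $(T^\sharp)^\dagger V=P_{T^*}(I-T^2)$ (the paper gets it via $(T^\sharp)^\dagger V=P_{T^*}-(T^\sharp)^\dagger T^\sharp TT$, you via the equivalent factorization $V=T^\sharp(Q_T-T^2)$ and $P_{T^*}Q_T=P_{T^*}$), and then conclude with $\rho\leq\|\cdot\|$ for the direct implication and $\rho=\|\cdot\|$ for Hermitian operators in the converse. No gaps.
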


\begin{proof}
Note that $(T^\sharp)^\dagger V=P_{T^*}-(T^\sharp)^\dagger T= P_{T^*}-(T^\sharp)^\dagger  T^\sharp T T=P_{T^*}( I-T^2)$. Therefore, if $\|P_{T^*}(I-T^2)\|<1$ then $T=T^\sharp -V$ converges because $\rho((T^\sharp)^\dagger V)\leq\|P_{T^*}(I-T^2)\|<1$. Conversely, if $P_{T^*}T^2\in\mathcal{L}^h$ and $T=T^\sharp-V$ converges then $\|P_{T^*}(I-T^2)\|=\rho(P_{T^*}(I-T^2))=\rho((T^\sharp)^\dagger V)<1$. 
\end{proof}

We finish this section by studying different criteria  for the convergence of the $Q$-proper splitting and the group proper splitting of two operators related by means of the star and sharp orders. 

\begin{pro}
	Consider $S,T \in\mathcal{L}(\mathcal{H})$ split operators such that $\|P_{T^*}(I-T)\|<1$. The following assertions hold:
	\begin{enumerate}
		\item  If $S\overset{*}{\leq}T$ then the $Q_T$-proper splitting of $T$ and the $Q_S$-proper splitting of $S$ converge.
		\item If  $S\overset{\sharp}{\leq}T$ then the $Q_T$-proper splitting of $T$ and the $Q_S$-proper splitting of $S$ converge.
			\end{enumerate}
\end{pro}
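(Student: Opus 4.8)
The plan is to notice first that the convergence of the $Q_T$-proper splitting of $T$ is immediate in both items: since $\|P_{T^*}(I-T)\|<1$ by hypothesis, Proposition \ref{Q-proper} applies directly to $T$. Thus the only real task is the convergence of the $Q_S$-proper splitting of $S$, and by the same sufficient condition it is enough to prove the norm estimate $\|P_{S^*}(I-S)\|<1$. Indeed, repeating verbatim the computation in the proof of Proposition \ref{Q-proper} for the splitting $S=Q_S-W$ gives $Q_S^\dagger W=P_{S^*}(I-S)$, so that $\rho(Q_S^\dagger W)\le\|P_{S^*}(I-S)\|$. A uniform ingredient for both orders is the range inclusion $\mathcal{R}(S^*)\subseteq\mathcal{R}(T^*)$, which follows from the factorizations of Definition \ref{ordenes} by passing to adjoints (and is also contained in Proposition \ref{caracterizaciones ordenes}); since $\mathcal{R}(T^*)$ is closed, this yields the projection identities $P_{S^*}P_{T^*}=P_{T^*}P_{S^*}=P_{S^*}$, which I will use to reduce the estimate for $S$ to the known one for $T$.

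For item 1, I would use the star-order factorizations $S=P_ST$ and $S^*=P_{S^*}T^*$; taking adjoints of the latter gives $S=TP_{S^*}$. Substituting this and using $P_{S^*}^2=P_{S^*}$ produces the sandwiched identity
\[
P_{S^*}(I-S)=P_{S^*}-P_{S^*}TP_{S^*}=P_{S^*}(I-T)P_{S^*}=\bigl(P_{S^*}P_{T^*}\bigr)(I-T)P_{S^*}.
\]
Taking norms and using $\|P_{S^*}\|\le 1$ then gives $\|P_{S^*}(I-S)\|\le\|P_{T^*}(I-T)\|<1$, so Proposition \ref{Q-proper} applies to $S$, and the comparison $\rho(Q_S^\dagger W)\le\|P_{T^*}(I-T)\|$ comes for free.

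For item 2, since $S$ is split and $S\overset{\sharp}{\leq}T$, the oblique projection in Definition \ref{ordenes} is forced to be $Q_S=Q_{\mathcal{R}(S)//\mathcal{N}(S)}$, with $S=Q_ST=TQ_S$. The key observation is that $Q_S$ shares the nullspace $\mathcal{N}(S)$ with $S$, so $P_{S^*}=P_{\mathcal{N}(S)^\perp}$ annihilates $\mathcal{R}(I-Q_S)=\mathcal{N}(Q_S)=\mathcal{N}(S)$; hence $P_{S^*}Q_S=P_{S^*}$. Combining this with $S=Q_ST$ gives $P_{S^*}S=P_{S^*}Q_ST=P_{S^*}T$, whence $P_{S^*}(I-S)=P_{S^*}(I-T)=P_{S^*}P_{T^*}(I-T)$, and again $\|P_{S^*}(I-S)\|\le\|P_{T^*}(I-T)\|<1$.

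The routine verifications are the projection identities and the reproduction of the formula $Q_S^\dagger W=P_{S^*}(I-S)$ from Proposition \ref{Q-proper}. The only genuinely delicate point, and the reason the two items cannot be handled by a single line, is that the two orders force different intermediate projectors: for the sharp order the relevant idempotent $Q_S$ has nullspace $\mathcal{N}(S)$, which makes $P_{S^*}Q_S=P_{S^*}$ and lets one replace $S$ by $T$ on the right in one stroke; for the star order the factor $P_S$ has nullspace $\mathcal{N}(S^*)\neq\mathcal{N}(S)$ in general, so that shortcut fails and one must instead sandwich with $P_{S^*}$ on both sides before inserting $P_{T^*}$.
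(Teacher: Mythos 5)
Your proposal is correct and follows essentially the same route as the paper: both reduce the problem to the estimate $\|P_{S^*}(I-S)\|<1$ via $Q_S^\dagger W=P_{S^*}(I-S)$, use $S=TP_{S^*}$ (star order) to get the sandwiched identity $P_{S^*}(I-T)P_{S^*}$ and $S=Q_ST$ (sharp order) to get $P_{S^*}(I-S)=P_{S^*}(I-T)$, and then insert $P_{T^*}$ through the range inclusion $\mathcal{R}(S^*)\subseteq\mathcal{R}(T^*)$. The only cosmetic difference is that the paper obtains $P_{S^*}(I-T)$ in item 2 by writing $W=Q_S(I-T)$ and using $Q_S^\dagger Q_S=P_{S^*}$, whereas you use the equivalent identity $P_{S^*}Q_S=P_{S^*}$.
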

\begin{proof}
	1. Note that  by Proposition \ref{Q-proper}, it holds that the $Q_T$-proper splitting of $T$ converges. Let $S=Q_S-W$ be the $Q_S$-proper splitting of $S$. Recall that $Q_S^\dagger W=P_{S^*}P_SW=P_{S^*}W=P_{S^*}(I-S)$. Since $S \overset{*}{\leq}T$ then $S=P_ST=TP_{S^*}$. Therefore, $\rho(Q_S^\dagger V)\leq \|P_{S^*}(I-S)\|= \|P_{S^*}(I-T)P_{S^*}\|= \|P_{S^*}P_{T^*}(I-T)P_{S^*}\|\leq \|P_{T^*}(I-T)\|<1$, where the last inequality follows by hypothesis. Hence, the $Q_S$-proper splitting of $S$ converges.
	
	2. Since $S\overset{\sharp}{\leq}T$, then $\mathcal{R}(S)\subseteq \mathcal{R}(T)$, $\mathcal{R}(S^*)\subseteq \mathcal{R}(T)$ and  $S=Q_ST=TQ_S$. Therefore,  $\rho(Q_S^\dagger W)\leq \|Q_S^\dagger W\|=\|Q_S^ \dagger Q_S(I-T)\| = \| P_{S^*}(I-T)\|= \| P_{S^*}P_{T^*}(I-T)\|\leq\| P_{T^*}(I-T)\|<1$, where the last inequality follows by hypothesis. Hence the $Q_S$-proper splitting of $S$ converges.
\end{proof}
\smallskip

\begin{pro}
Let $S,T \in \mathcal{L}(\mathcal H)$ be  split operators such that $\|P_{S^*}(I-ST)\|<1$. Then the following assertions hold:
\begin{enumerate}
\item If $S \overset{*}{\leq}T$  then the group proper splitting  of $S$ converges.
\item If $S \overset{\sharp}{\leq}T$    then the group proper splitting  of $S$ converges.
\end{enumerate}
\end{pro}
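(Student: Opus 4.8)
The plan is to reduce the convergence of the group proper splitting $S=S^\sharp-W$ to a single norm estimate and then to exploit the order hypothesis to replace $S^2$ by $ST$. By Corollary \ref{corrho}, the group proper splitting of $S$ converges if and only if $\rho((S^\sharp)^\dagger W)<1$. Repeating verbatim the computation of Proposition \ref{group-proper} with $S$ in place of $T$ (using $(S^\sharp)^\dagger S^\sharp=P_{S^*}$ and $S^\sharp S^2=S$) gives $(S^\sharp)^\dagger W=P_{S^*}(I-S^2)$. Since $\rho((S^\sharp)^\dagger W)\leq\|(S^\sharp)^\dagger W\|=\|P_{S^*}(I-S^2)\|$, it suffices in both items to prove $\|P_{S^*}(I-S^2)\|<1$. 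Thus the whole argument hinges on rewriting $P_{S^*}(I-S^2)$ in terms of the quantity $P_{S^*}(I-ST)$ whose norm is controlled by hypothesis.

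For item 1, I would use the star-order factorization from Definition \ref{ordenes}: $S\overset{*}{\leq}T$ gives $S=P_ST$ and, taking adjoints in $S^*=P_{S^*}T^*$, also $S=TP_{S^*}$. Substituting the latter into the second factor of $S^2=S\cdot S$ yields $S^2=S(TP_{S^*})=(ST)P_{S^*}$, hence $P_{S^*}S^2=P_{S^*}STP_{S^*}$ and therefore $P_{S^*}(I-S^2)=P_{S^*}(I-ST)P_{S^*}$. Since $\|P_{S^*}\|=1$, this gives $\|P_{S^*}(I-S^2)\|\leq\|P_{S^*}(I-ST)\|<1$, and convergence follows.

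For item 2, I would use that $S\overset{\sharp}{\leq}T$ provides the oblique projection $Q_S=S^\sharp S=SS^\sharp$ with $S=Q_ST=TQ_S$. Inserting $S=Q_ST$ into the second factor of $S^2$ and using $SQ_S=SS^\sharp S=S$ gives the even simpler identity $S^2=S(Q_ST)=(SQ_S)T=ST$. Consequently $P_{S^*}(I-S^2)=P_{S^*}(I-ST)$, so $\|P_{S^*}(I-S^2)\|=\|P_{S^*}(I-ST)\|<1$ and the group proper splitting of $S$ converges. The trivial subcase $S=T$ of the sharp order is covered since then $S^2=ST$ automatically.

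The calculations are short, so the only genuine obstacle is bookkeeping: one must apply the correct one-sided factorization ($S=TP_{S^*}$ for the star order, $S=Q_ST$ for the sharp order) to the appropriate factor of $S^2$, and recall the group-inverse identities $SS^\sharp S=S$ and $(S^\sharp)^\dagger S^\sharp=P_{S^*}$ that underlie the formula $(S^\sharp)^\dagger W=P_{S^*}(I-S^2)$. Once these identities are in place, the norm bound and hence convergence via Corollary \ref{corrho} are immediate.
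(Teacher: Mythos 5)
Your proposal is correct and follows essentially the same route as the paper: both reduce the question to the identity $(S^\sharp)^\dagger W=P_{S^*}(I-S^2)$ from the group-splitting computation and then use $S=TP_{S^*}$ (star order) to get $P_{S^*}(I-S^2)=P_{S^*}(I-ST)P_{S^*}$, respectively $S=Q_ST$ with $SQ_S=S$ (sharp order) to get $P_{S^*}(I-S^2)=P_{S^*}(I-ST)$. The only cosmetic difference is that in item 2 the paper expands $(S^\sharp)^\dagger W$ directly rather than first simplifying $S^2=ST$, which is an equivalent bookkeeping choice.
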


\begin{proof}
Let $T=T^\sharp-V$  and $S=S^\sharp-W$ the group proper splittings of $T$ and $S$, respectively.

1.  Since $S \overset{*}{\leq}T$ then $S=P_ST=TP_{S^*}$. Observe that $(S^\sharp)^\dagger W=P_{S^*}(1-S^2)=P_{S^*}(1-STP_{S^*})=P_{S^*}(1-ST)P_{S^*}$. Hence $\rho((S^\sharp)^\dagger W)\leq \|P_{S^*}(I-ST) \| <1, $ so that the group proper splitting of $S$ also converges.

2. Since $S \overset{\sharp}{\leq}T$ then $S=Q_ST=TQ_S$. Observe that $(S^\sharp)^\dagger W=(S^\sharp)^\dagger (S^\sharp-S)=P_{S^*}-(S^\sharp)^\dagger Q_ST=P_{S^*}-(S^\sharp)^\dagger S^\sharp ST=P_{S^*}(I-ST)$. Hence $\rho((S^\sharp)^\dagger W)\leq \|P_{S^*}(I-ST) \| <1, $ so that the group proper splitting of $S$ converges. 
\end{proof}

\section{Splittings for Hermitian operators}

In this section we introduce two new types of proper splittings for closed range Hermitian operators. 

\begin{defi}  Given $T\in \mathcal{L}^h$  with closed range we define the following splittings of $T$:
\begin{enumerate}
\item  the MP-proper splitting of $T$ is $T=T^\dagger-W$;
\item  the projection proper splitting of $T$ is $T=P_T-Z$. 
\end{enumerate}
\end{defi}

\begin{rem}
Note that for $T\in \mathcal L^h$, the MP-proper splitting and the projection splitting of $T$ are particular cases of the group proper splitting and the $Q_T$-proper splitting of $T$, respectively. In addition, if $T\in \mathcal L^+$ the projection splitting of $T$ coincides with the polar proper splitting of $T$. 
\end{rem}

\begin{rem}
Given  $T\in \mathcal{L}^h$ with closed range it could be natural to consider the proper splitting of $T$, $T=|T|-Y$. However, as in the finite dimensional case \cite{AG-splitting} it holds that if this proper splitting converges then $T\in\mathcal{L}^+$, so that $T=|T|$. In fact, since $T\in\mathcal{L}^h$ then $T\leq |T|$ and so that
 $Y\in\mathcal{L}^+$. Now, if the proper splitting $T=|T|-Y$ converges then $\rho(|T|^\dagger Y)=\rho(Y^{1/2}|T|^\dagger Y^{1/2})<1$. In addition it holds that $\mathcal{R}((|T|-T)^{1/2})\subseteq\overline{\mathcal{R}(|T|-T)}\subseteq\overline{\mathcal{R}(|T|)+\mathcal{R}(T)} =\overline{\mathcal{R}(T)+\mathcal{R}(T)}=\overline{\mathcal{R}(T)}=\mathcal{R}(T)$. Then, by Proposition \ref{BLT}, we get that $|T|-T\leq |T|$. Therefore, $T\in\mathcal{L}^+$.
\end{rem}

The following result gives a characterization for the convergence of the MP-proper splitting and the projection proper splitting.

\begin{pro}\label{convergencia MP y projection spl}
Consider $T\in \mathcal{L}^h$ a closed range operator. Then the following assertions hold:  
\begin{enumerate}
\item  the MP-proper splitting of $T$  converges if and only if $\|P_T-T^2\|<1$;
\item the projection proper splitting of $T$ converges if and only if $\|P_T-T\|<1$.
\end{enumerate}
\end{pro}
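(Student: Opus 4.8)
The plan is to reduce both convergence questions to the spectral-radius criterion of Corollary \ref{corrho}, compute the relevant operator $U^\dagger V$ explicitly in each case, and then exploit the hypothesis $T\in\mathcal{L}^h$ to force $U^\dagger V$ to be Hermitian, so that its spectral radius coincides with its norm.

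First I would record the structural consequences of $T$ being Hermitian with closed range: $T$ is then EP, so $\mathcal{R}(T)=\mathcal{R}(T^*)$, whence $P_T=P_{T^*}$ and $T^\sharp=T^\dagger$; moreover $P_T T=T$, $TT^\dagger=P_T$, and $(T^\dagger)^\dagger=T$. Using the first two one checks that $T=T^\dagger-W$ and $T=P_T-Z$ are genuine proper splittings: the range and nullspace conditions $\mathcal{R}(U)=\mathcal{R}(T)$, $\mathcal{N}(U)=\mathcal{N}(T)$ hold for $U=T^\dagger$ because $\mathcal{R}(T^\dagger)=\mathcal{R}(T^*)=\mathcal{R}(T)$ and $\mathcal{N}(T^\dagger)=\mathcal{N}(T^*)=\mathcal{N}(T)$, and for $U=P_T$ directly from the definition of $P_T$ and EP-ness. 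Hence Corollary \ref{corrho} applies and convergence is equivalent to $\rho(U^\dagger V)<1$ in both cases.

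For item 2 I would take $U=P_T$ and $V=Z=P_T-T$, and compute $U^\dagger V=P_T(P_T-T)=P_T-P_T T=P_T-T$, using $P_T^\dagger=P_T$ and $P_T T=T$. Since $P_T-T$ is Hermitian it is normal, so $\rho(P_T-T)=\|P_T-T\|$ and convergence is equivalent to $\|P_T-T\|<1$. For item 1 I would take $U=T^\dagger$ and $V=W=T^\dagger-T$, and compute $U^\dagger V=(T^\dagger)^\dagger(T^\dagger-T)=T(T^\dagger-T)=TT^\dagger-T^2=P_T-T^2$, using the involution $(T^\dagger)^\dagger=T$ and $TT^\dagger=P_T$. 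As both $P_T$ and $T^2$ are Hermitian, $P_T-T^2$ is Hermitian, hence $\rho(P_T-T^2)=\|P_T-T^2\|$ and convergence is equivalent to $\|P_T-T^2\|<1$.

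Alternatively, since the preceding Remark already records that the MP-proper and projection proper splittings are the particular cases of the group proper and $Q_T$-proper splittings for Hermitian $T$, one could instead invoke Propositions \ref{group-proper} and \ref{Q-proper} directly: for Hermitian $T$ the operators $P_{T^*}T^2=T^2$ and $P_{T^*}T=T$ are automatically Hermitian, so the self-adjointness hypotheses needed for the converse halves of those propositions hold for free, and $\|P_{T^*}(I-T^2)\|=\|P_T-T^2\|$, $\|P_{T^*}(I-T)\|=\|P_T-T\|$ by the same simplifications. I expect the only delicate point to be the careful bookkeeping of the Moore-Penrose identities, in particular $(T^\dagger)^\dagger=T$ and $TT^\dagger=P_T$; once $U^\dagger V$ is shown to be Hermitian, the passage from spectral radius to norm is immediate, since a Hermitian operator is normal.
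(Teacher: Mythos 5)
Your proposal is correct and follows essentially the same route as the paper: compute $U^\dagger V$ explicitly for each splitting, obtaining $TW=P_T-T^2$ and $P_TZ=P_T-T$, observe that these are Hermitian so that $\rho(U^\dagger V)=\|U^\dagger V\|$, and conclude via the spectral-radius criterion of Corollary \ref{corrho}. The extra verifications you include (EP-ness, the proper-splitting conditions, and the alternative via Propositions \ref{group-proper} and \ref{Q-proper}) are sound but not needed beyond what the paper's own two-line argument uses.
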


\begin{proof}
  Let   $T=T^\dagger-W$ the MP- proper splitting of $T$ and let $T=P_T-Z$ the projection proper splitting of $T$.

1. Since $TW=P_T-T^2\in  \mathcal{L}^h$ then $\rho(TW)=\|P_T-T^2\|$. Therefore, the assertion follows.

2.  Since $P_TZ=P_T(P_T-T)=P_T-T\in   \mathcal{L}^h$ then $\rho(P_TZ)=\|P_T-T\|$. Hence, the assertion follows.
\end{proof}

The next result is a comparison criterion between  the MP-proper splitting, the projection proper splitting and the polar proper  splitting  for Hermitian operators.

\begin{pro}\label{comp-PS,MPS,proS}
Consider $T\in \mathcal{L}^h$ a closed range operator and  $T=U_T-V=T^\dagger-W=P_T-Z$, the polar proper splitting, the MP-proper splitting and the projection proper splitting of $T$, respectively.  Then the following assertions hold:
\begin{enumerate}
	\item if  the projection proper splitting of $T$ converges then the polar proper splitting of $T$ converges. Moreover, $\rho(U_T^*V)\leq \rho(P_TZ)<1$.
	\item  if $\|T\|\leq 1$ and the MP-proper splitting of $T$ converges then the polar proper splitting of $T$ converges. Moreover, $\rho(U_T^*V)\leq \rho(TW)<1$.
	\item if  $\|P_T+T\|\leq 1$ and the projection proper splitting of $T$ converges then the MP-proper splitting of $T$ converges. Moreover, $\rho(TW)\leq \rho(P_TZ)<1$.
\end{enumerate}
\end{pro}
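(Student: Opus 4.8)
The plan is to reduce all three comparisons to elementary scalar inequalities on the spectrum of the invertible part of $T$. First I would record the identities already available in the paper: since $T\in\mathcal{L}^h$ we have $\mathcal{R}(T^*)=\mathcal{R}(T)$, hence $P_{T^*}=P_T$, so the proof of Theorem \ref{convergencia del polar} gives $\rho(U_T^*V)=\|P_T-|T|\|$, while Proposition \ref{convergencia MP y projection spl} gives $\rho(TW)=\|P_T-T^2\|$ and $\rho(P_TZ)=\|P_T-T\|$. Thus each convergence hypothesis asserts that one of these Hermitian operators has norm strictly less than $1$, and each claimed inequality between spectral radii is exactly an inequality between two of these norms.

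Next I would pass to the nonzero part of $T$. Because $T$ is Hermitian with closed range, $\mathcal{H}=\mathcal{R}(T)\oplus\mathcal{N}(T)$ orthogonally and $T$ respects this decomposition; each of $P_T-T$, $P_T-T^2$, $P_T-|T|$ and $P_T+T$ vanishes on $\mathcal{N}(T)$, while on $\mathcal{R}(T)$ the projection $P_T$ acts as the identity and $A:=T|_{\mathcal{R}(T)}$ is an invertible Hermitian operator with $0\notin\sigma(A)$. Consequently the four norms equal, respectively, $\|I-A\|$, $\|I-A^2\|$, $\|I-|A|\|$ and $\|I+A\|$ (computed on $\mathcal{R}(T)$), and $\|T\|=\|A\|$. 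Applying the continuous functional calculus for the self-adjoint operator $A$, each of these is the maximum over $\lambda\in\sigma(A)$ of the corresponding scalar function, namely $\max_\lambda|1-\lambda|$, $\max_\lambda|1-\lambda^2|$, $\max_\lambda\bigl|1-|\lambda|\bigr|$, $\max_\lambda|1+\lambda|$ and $\max_\lambda|\lambda|$.

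With this in hand the three assertions become pointwise inequalities on $\sigma(A)\subseteq\mathbb{R}\setminus\{0\}$, which I would verify and then take suprema over. For item (1), $\bigl|1-|\lambda|\bigr|\le|1-\lambda|$ holds for every real $\lambda$, giving $\rho(U_T^*V)\le\rho(P_TZ)$. For item (2), the hypothesis $\|T\|\le1$ forces $|\lambda|\le1$, whence $1-|\lambda|\le(1-|\lambda|)(1+|\lambda|)=1-\lambda^2$, giving $\rho(U_T^*V)\le\rho(TW)$. For item (3), the hypothesis $\|P_T+T\|\le1$ forces $|1+\lambda|\le1$ for every $\lambda\in\sigma(A)$, so $|1-\lambda^2|=|1-\lambda|\,|1+\lambda|\le|1-\lambda|$, giving $\rho(TW)\le\rho(P_TZ)$. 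In each case the dominating spectral radius is $<1$ by the convergence hypothesis, hence so is the dominated one, and the corresponding characterizations (Theorem \ref{convergencia del polar} and Proposition \ref{convergencia MP y projection spl}, together with Corollary \ref{corrho}) yield convergence of the finer splitting.

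The computations themselves are short; the step requiring care is the spectral reduction: one must check that $P_T$ restricts to the identity on $\mathcal{R}(T)$, that $A$ is genuinely invertible so that $|A|$ and the spectral mapping are legitimate on a spectrum bounded away from $0$, and that the hypotheses $\|T\|\le1$ and $\|P_T+T\|\le1$ translate exactly into the spectral localizations $|\lambda|\le1$ and $|1+\lambda|\le1$ used above. Once that dictionary between the operator norms and the scalar functions on $\sigma(A)$ is in place, the three inequalities are immediate and the proposition follows.
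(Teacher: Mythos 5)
Your proof is correct, and for items (1) and (2) it takes a genuinely different route from the paper. The paper works at the operator level with the L\"owner order: for (1) it uses $T\leq |T|$ to get $P_T-Z=P_T-T\geq P_T-|T|=U_T^*V$ and reads off the spectral radius comparison from this inequality of Hermitian operators; for (2) it first derives $|T|^2\leq |T|$ from $\|T\|\leq 1$ via Proposition \ref{BLT} and then argues the same way from $P_T-T^2\geq P_T-|T|$; for (3) it factors $P_T-T^2=(P_T-T)(P_T+T)$ and uses submultiplicativity of the norm, which is essentially your scalar inequality $|1-\lambda^2|=|1-\lambda|\,|1+\lambda|$ stated at the operator level. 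Your reduction to the invertible Hermitian restriction $A=T|_{\mathcal{R}(T)}$ and to pointwise inequalities on $\sigma(A)$ buys transparency and uniformity: all three items become the elementary facts $\bigl|1-|\lambda|\bigr|\leq|1-\lambda|$, $1-|\lambda|\leq 1-\lambda^2$ for $|\lambda|\leq 1$, and $|1-\lambda^2|\leq|1-\lambda|$ when $|1+\lambda|\leq 1$. It also sidesteps a subtlety that the paper's one-line order argument glosses over: for Hermitian operators $B\leq A$ does \emph{not} in general imply $\rho(B)\leq\rho(A)$ (take $B=-2I$, $A=0$), so the paper's step from $U_T^*V\leq P_TZ$ to $\|U_T^*V\|\leq\|P_TZ\|$ really needs the two-sided spectral control that your functional-calculus argument makes explicit. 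What the paper's approach buys in exchange is that it stays within the order-theoretic toolkit (Theorem \ref{lowner-order}, Proposition \ref{BLT}) used throughout the article and does not require isolating the invertible part of $T$. Your appeal to Corollary \ref{corrho} and to the characterizations in Theorem \ref{convergencia del polar} and Proposition \ref{convergencia MP y projection spl} to convert the norm bounds back into convergence statements is exactly right.
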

\begin{proof}
Let $T=U_T-V=T^\dagger-W=P_T-Z$, the polar proper splitting, the MP-proper splitting and the projection proper splitting of $T$, respectively.

	1. Suppose that the projection splitting of $T$ converges. Since $T\in\mathcal{L}^h$ then $T\leq |T|$. Therefore,  $P_TZ=P_T-T\geq P_T-|T|=U_TV$, so that $\rho(U_T^*V)\leq \rho(P_TZ)<1$.
	
	2. Note that $\rho(|T||T|^\dagger |T|)=\rho(|T|)=\||T|\|=\|T\|\leq 1$, where the second equality holds because $|T|\in \mathcal{L}^+$. Then, by Proposition \ref{BLT}, it holds that $|T|^2\leq |T|$. Now, observe that  $TW=P_T-|T|^2\geq P_T-|T|=U^*_TV$. Hence, $\rho(U_T^*V)\leq \rho(TW)<1$.
	
	3. Since $T\in\mathcal{L}^h$ then $P_T-T, \,P_T-T^2\in\mathcal{L}^h$. Then observe that, $\rho(TW)=\|P_T-T^2\|=\|(P_T-T)(P_T+T)\|\leq \|P_T-T\|=\rho(P_TZ)<1$.
\end{proof}

Given $T\in \mathcal{L}^h$ with closed range and $W\in\mathcal{L}(\mathcal{H})$, the equation $TX=W$ is solvable if and only if the equation $|T|X=W$ is solvable. Therefore, it is interesting to establish some relationships between  proper splittings of $T$ and  proper splittings of $|T|$.

\begin{pro}
Consider $T\in \mathcal{L}^h$ a closed range operator. Then the polar proper splitting of $T$ converges if and only if the projection proper splitting of $|T|$ converges. Moreover, if $T=U_T-V$ is the polar proper splitting of $T$ and $|T|=P_{|T|}-Z$ is the projection proper splitting of $|T|$ then $\rho(U_T^*V)=\rho(P_{|T|}Z)$.
\end{pro}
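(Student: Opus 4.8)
The plan is to collapse both convergence criteria and both spectral radii to the single quantity $\|P_{T^*}-|T|\|$, the key observation being that $P_{|T|}=P_{T^*}$.

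First I would establish that $\mathcal{R}(|T|)=\mathcal{R}(T^*)$, and hence $P_{|T|}=P_{T^*}$. Since $|T|=(T^*T)^{1/2}$ is positive with $\mathcal{N}(|T|)=\mathcal{N}(T^*T)=\mathcal{N}(T)$, and $T$ has closed range, $|T|$ also has closed range; being Hermitian, $\mathcal{R}(|T|)=\mathcal{N}(|T|)^\bot=\mathcal{N}(T)^\bot=\mathcal{R}(T^*)$, where the last equality uses that $T$ has closed range. Consequently $P_{|T|}=P_{T^*}$, and in particular $P_{T^*}|T|=|T|$.

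Next I would identify the two operators whose spectral radii are in question. For the polar proper splitting $T=U_T-V$, exactly as in the proof of Theorem \ref{convergencia del polar}, one has $V=U_T(I-|T|)$ and $U_T^*U_T=P_{T^*}$, so $U_T^*V=P_{T^*}-P_{T^*}|T|=P_{T^*}-|T|$, which is Hermitian; hence $\rho(U_T^*V)=\|P_{T^*}-|T|\|$. For the projection proper splitting $|T|=P_{|T|}-Z$ of the closed range positive operator $|T|$, we have $Z=P_{|T|}-|T|$ and therefore $P_{|T|}Z=P_{|T|}(P_{|T|}-|T|)=P_{|T|}-|T|$, again Hermitian, so $\rho(P_{|T|}Z)=\|P_{|T|}-|T|\|$.

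Finally, invoking $P_{|T|}=P_{T^*}$ gives the equality of spectral radii directly, $\rho(U_T^*V)=\|P_{T^*}-|T|\|=\|P_{|T|}-|T|\|=\rho(P_{|T|}Z)$. The equivalence of convergence is then immediate: by Theorem \ref{convergencia del polar} the polar proper splitting of $T$ converges if and only if $\|P_{T^*}-|T|\|<1$, while by item 2 of Proposition \ref{convergencia MP y projection spl}, applied to the closed range Hermitian (indeed positive) operator $|T|$, its projection proper splitting converges if and only if $\|P_{|T|}-|T|\|<1$; under $P_{|T|}=P_{T^*}$ these are one and the same condition. The only genuine point requiring care is the range identification $\mathcal{R}(|T|)=\mathcal{R}(T^*)$; once it is in place the statement reduces to an equality between two norms that are literally identical, so no further estimate is needed.
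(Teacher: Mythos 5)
Your proposal is correct and matches the paper's intent: the paper's own proof is just ``It is immediate,'' and the implicit computation is exactly yours, namely that $U_T^*V=P_{T^*}-|T|$ and $P_{|T|}Z=P_{|T|}-|T|$ are the same Hermitian operator once one notes $\mathcal{R}(|T|)=\mathcal{R}(T^*)$, so both spectral radii equal $\|P_{T^*}-|T|\|$. You have simply supplied the details the authors omit.
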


\begin{proof}
It is immediate.
\end{proof}

Next,  we build a proper splitting of an operator in $\mathcal{P}\cdot\mathcal{L}^h$ in terms of the projection proper splitting of an Hermitian associated factor. 

\begin{teo}\label{inducido en PLh}
Let $S\in\mathcal{P}\cdot\mathcal{L}^h$ be a split operator with closed range. Then $S=P_SP_{S^*}-W$ is a proper splitting of $S$. Moreover, the proper splitting $S=P_SP_{S^*}-W$  converges if and only if $\|P_{S^*}-Q^*S\|<1$, where $Q=Q_{\mathcal{R}(S)//\mathcal{N}(S)}$.
\end{teo}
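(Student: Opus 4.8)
The plan is to establish the two assertions in order. First I would verify that $S=P_SP_{S^*}-W$ is a genuine proper splitting, i.e.\ that the leading term $U:=P_SP_{S^*}$ satisfies $\mathcal{R}(U)=\mathcal{R}(S)$ and $\mathcal{N}(U)=\mathcal{N}(S)$, and then set $W:=U-S=P_SP_{S^*}-S$. For the range and nullspace conditions I would use the structural description of $\mathcal{P}\cdot\mathcal{L}^h$ recorded just before the theorem: since $S$ is a split operator in $\mathcal{P}\cdot\mathcal{L}^h$, by \cite[Corollary 2.13]{AG-PB} there exists $B\in\mathcal{L}^h$ with $\mathcal{N}(B)=\mathcal{N}(S)$ and $S=P_SB$. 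The factor $P_SP_{S^*}$ is exactly the operator $P_S P_{S^*}$; because $P_{S^*}=P_{\overline{\mathcal R(S^*)}}=P_{\mathcal N(S)^\bot}$ is the projection onto $\mathcal{N}(S)^\bot$, one checks $\mathcal{N}(P_SP_{S^*})=\mathcal{N}(P_{S^*})=\mathcal{N}(S)$ and, since $\mathcal{R}(S)\dot+\mathcal{N}(S)=\mathcal{H}$ gives $P_S$ surjective onto $\mathcal{R}(S)$, that $\mathcal{R}(P_SP_{S^*})=\mathcal{R}(P_S)=\mathcal{R}(S)$.

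The heart of the argument is the convergence criterion. By Corollary \ref{corrho}, the proper splitting $S=U-W$ converges if and only if $\rho(U^\dagger W)<1$, where $U=P_SP_{S^*}$. So the plan is to compute $U^\dagger W$ and then its spectral radius. Writing $Q=Q_{\mathcal{R}(S)//\mathcal{N}(S)}$ for the oblique projection from the split decomposition, I would exploit that $S$ is group invertible (being split) with $SS^\sharp=S^\sharp S=Q$ and $Q^*=Q_{\mathcal{N}(S)^\bot//\cdots}$. The product $U=P_SP_{S^*}$ is itself of the form $\mathcal{P}\cdot\mathcal{P}$, so by \cite[Theorem 4.1]{MR2653816} (used already in Proposition \ref{Q-proper}) its Moore--Penrose inverse is computable, and one gets $U^\dagger=P_{S^*}P_S$ together with $U^\dagger U=P_{S^*}$. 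I would then reduce
\[
U^\dagger W=U^\dagger(U-S)=U^\dagger U-U^\dagger S=P_{S^*}-U^\dagger S.
\]
The remaining task is to identify $U^\dagger S$ with $Q^*S$ (up to the projection $P_{S^*}$ acting on the left), so that $U^\dagger W=P_{S^*}-Q^*S$. Here I expect to use that $U^\dagger=P_{S^*}P_S$ and that $P_S$ acts as the identity on $\mathcal{R}(S)$ together with $Q=Q_{\mathcal{R}(S)//\mathcal{N}(S)}$ relating $P_S$-data to the oblique projection; the key algebraic identity should be $P_{S^*}P_S S=Q^*S$, which I would verify by testing against the decomposition $\mathcal{H}=\mathcal{R}(S)\dot+\mathcal{N}(S)$ and using $Q^*=Q_{\mathcal N(S)^\bot//\mathcal R(S)^\bot}$.

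Once $U^\dagger W=P_{S^*}-Q^*S$ is established, the final step is to pass from $\rho(U^\dagger W)<1$ to the stated norm inequality $\|P_{S^*}-Q^*S\|<1$. This is where I would invoke that $P_{S^*}-Q^*S$ is Hermitian: one checks $(Q^*S)^*=S^*Q$ and, using $S=QS$ (since $Q$ is the identity on $\mathcal{R}(S)\supseteq\mathcal{R}(S)$) together with $P_{S^*}=P_{\mathcal N(S)^\bot}$, that $Q^*S$ is self-adjoint, hence $P_{S^*}-Q^*S\in\mathcal{L}^h$ and $\rho(P_{S^*}-Q^*S)=\|P_{S^*}-Q^*S\|$. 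This self-adjointness verification is the main obstacle: it is the one place where the hypothesis $S\in\mathcal{P}\cdot\mathcal{L}^h$ is genuinely needed, since it forces $P_SB=S$ with $B$ Hermitian and thereby makes the relevant product symmetric; for a general split operator $Q^*S$ need not be Hermitian. With self-adjointness in hand, $\rho(U^\dagger W)=\|P_{S^*}-Q^*S\|$ and the equivalence with convergence follows from Corollary \ref{corrho}.
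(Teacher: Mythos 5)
Your overall strategy coincides with the paper's: check the range and nullspace conditions for $U:=P_SP_{S^*}$, compute $U^\dagger W$, identify it with $P_{S^*}-Q^*S$, and use self-adjointness to convert $\rho(U^\dagger W)<1$ into the norm condition via Corollary \ref{corrho}. The first part of your plan is fine. The gap is in the central computation: the formula $(P_SP_{S^*})^\dagger=P_{S^*}P_S$ is false. What \cite[Theorem 4.1]{MR2653816} gives (and what is used in Proposition \ref{Q-proper}) is $Q^\dagger=P_{S^*}P_S$ for the \emph{oblique projection} $Q=Q_{\mathcal{R}(S)//\mathcal{N}(S)}$; taking adjoints and then Moore--Penrose inverses yields $(P_SP_{S^*})^\dagger=\bigl((Q^\dagger)^*\bigr)^\dagger=Q^*$, not $P_{S^*}P_S$. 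Indeed $P_{S^*}P_S$ is not even an inner generalized inverse of $P_SP_{S^*}$ unless the two projections commute, since $(P_SP_{S^*})(P_{S^*}P_S)(P_SP_{S^*})=(P_SP_{S^*})^2\neq P_SP_{S^*}$ in general. The identities you rely on downstream fail accordingly: $U^\dagger U$ would be $P_{S^*}P_SP_{S^*}\neq P_{S^*}$, and your ``key algebraic identity'' $P_{S^*}P_SS=Q^*S$ (i.e.\ $P_{S^*}S=Q^*S$) is false. For instance, in $\mathbb{C}^2$ take $\mathcal{R}(S)=\mathrm{span}\{e_1\}$, $\mathcal{N}(S)=\mathrm{span}\{e_1+e_2\}$ and $S=P_SP_{S^*}$, a split operator in $\mathcal{P}\cdot\mathcal{P}\subseteq\mathcal{P}\cdot\mathcal{L}^h$; a direct computation gives $Q^*S=2\,P_{S^*}S$. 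So, as written, your computation does not produce $P_{S^*}-Q^*S$ but a genuinely different operator.

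Once the correct inverse $U^\dagger=Q^*$ is used, the computation collapses immediately and matches the paper: $U^\dagger W=Q^*(P_SP_{S^*}-S)=Q^*(Q^*)^\dagger-Q^*S=P_{\mathcal{R}(Q^*)}-Q^*S=P_{S^*}-Q^*S$. The remaining point is the self-adjointness of $Q^*S$, which you correctly identify as the place where $S\in\mathcal{P}\cdot\mathcal{L}^h$ enters, but your sketch via ``$S=QS$'' does not establish it. The efficient route (the paper's) is to invoke \cite[Corollary 2.13]{AG-PB} to write $S=P_ST$ with $T\in\mathcal{L}^h$ and $\mathcal{N}(T)=\mathcal{N}(S)$; then $\overline{\mathcal{R}(T)}=\mathcal{N}(S)^\perp=\mathcal{R}(S^*)$ gives $T=P_{S^*}T$, hence $Q^*S=Q^*P_SP_{S^*}T=P_{S^*}T=T\in\mathcal{L}^h$, so $\rho(U^\dagger W)=\|P_{S^*}-Q^*S\|$ and Corollary \ref{corrho} concludes. (This factorization also streamlines your first step, since $U=P_SP_T$ with $\mathcal{R}(T)=\mathcal{R}(S^*)$.)
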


\begin{proof}
Since $S\in\mathcal{P}\cdot\mathcal{L}^h$ is a split operator with closed range then, by \cite[Theorem 2.12 and Proposition 2.17]{AG-PB} there exist a unique $T\in\mathcal{L}^h$ with $\mathcal{N}(T)=\mathcal{N}(S)$ such that $S=P_ST$. Namely, $T=(S+S^*-P_S S^*)Q$, where $Q=Q_{\mathcal{R}(S)//\mathcal{N}(S)}$.  Let $T=P_T-V$ be the projection proper splitting of $T$. Let us see that $S=P_SP_T-P_SV$ is a proper splitting of $S$. In fact, $\mathcal{R}(P_SP_T)=P_S\mathcal{R}(T)=\mathcal{R}(P_ST)=\mathcal{R}(S)$. In addition, $\mathcal{N}(S)=\mathcal{N}(T)=\mathcal{R}(T)^\bot\subseteq \mathcal{N}(P_SP_T)$. Now, if $P_SP_Tx=0$ then $P_Tx\in\mathcal{R}(T)\cap\mathcal{R}(S)^\bot=\mathcal{N}(T)^\bot\cap\mathcal{R}(S)^\bot=\mathcal{N}(S)^\bot \cap\mathcal{R}(S)^\bot=(\mathcal{N}(S)+\mathcal{R}(S))^\bot=\{0\}$, so that $x\in\mathcal{R}(T)^\bot=\mathcal{N}(S)$. Then, $\mathcal{N}(P_SP_T)=\mathcal{N}(S)$ and therefore $S=P_SP_T-P_SV$ is a proper splitting of $S$.  Finally, since $(P_SP_{S^*})^\dagger W=Q^*(P_SP_{S^*}-S)=Q^*(P_SP_{S^*}-P_ST)=P_{S^*}-T\in\mathcal{L}^h$ then the proper splitting $S=P_SP_{S^*}-W$ converges if and only if $\|P_{S^*}-T\|<1$. The assertion follows replacing $T$ by $(S+S^*-P_S S^*)Q$, where $Q=Q_{\mathcal{R}(S)//\mathcal{N}(S)}$.
\end{proof}

\begin{rem}
With the same notation that  in the proof of Theorem \ref{inducido en PLh} it holds that  the projection proper splitting of $T$ converges if and only if  the proper splitting $S=P_SP_T-P_SV$ converges. In fact, by \cite[Theorem 4.1]{MR2653816}  we get that $(P_SP_T)^\dagger P_SV=Q^*P_SV=Q^*V=Q^*(P_T-T)=P_T-T$, because $\mathcal{R}(T)=\mathcal{R}(S^*)$. Then $\rho((P_SP_T)^\dagger P_SV)=\|P_T-T\|$ and the assertion follows from Proposition \ref{convergencia MP y projection spl}.
\end{rem}

\begin{cor}\label{inducido-PA}
Consider $S\in\mathcal{P}\cdot\mathcal{L}^+$ with closed range. Then there exists $T\in \mathcal{L}^+$ with $\mathcal{N}(T)=\mathcal{N}(S)$ such that $S=P_ST$. Moreover, the projection proper splitting of $T$, $T=P_T-V$, induces a proper splitting of $S$, namely, $S=P_SP_T-P_SV$. In addition, the projection proper splitting of $T$ converges if and only if $S=P_SP_T-P_SV$ converges.
\end{cor}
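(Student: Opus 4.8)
The plan is to obtain Corollary \ref{inducido-PA} as a direct specialization of Theorem \ref{inducido en PLh}, since $\mathcal{P}\cdot\mathcal{L}^+\subseteq\mathcal{P}\cdot\mathcal{L}^h$. First I would invoke the characterization of $\mathcal{P}\cdot\mathcal{L}^+$ recorded in Section 5: for a closed range operator $S$, the condition $S\in\mathcal{P}\cdot\mathcal{L}^+$ together with closedness of $\mathcal{R}(S)$ forces $S$ to be a split operator (this is \cite[Theorem 3.3]{AG-PA}), so the hypotheses of Theorem \ref{inducido en PLh} are automatically met. The key refinement over the Hermitian case is that the associated factor can be taken \emph{positive} rather than merely Hermitian: I would cite the relevant result from \cite{AG-PA} guaranteeing the existence of a (unique) $T\in\mathcal{L}^+$ with $\mathcal{N}(T)=\mathcal{N}(S)$ and $S=P_ST$.

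Once this positive factor $T$ is in hand, the construction of the induced splitting is verbatim the one carried out in the proof of Theorem \ref{inducido en PLh}. I would form the projection proper splitting $T=P_T-V$ of $T$ (legitimate because $T\in\mathcal{L}^+$ has closed range, since $\mathcal{R}(T)=\mathcal{R}(S^*)$ is closed), and then verify that $S=P_SP_T-P_SV$ is a proper splitting of $S$, i.e.\ that $\mathcal{R}(P_SP_T)=\mathcal{R}(S)$ and $\mathcal{N}(P_SP_T)=\mathcal{N}(S)$. Both range and nullspace computations are identical to those in Theorem \ref{inducido en PLh}, relying only on $\mathcal{R}(T)=\mathcal{R}(S^*)$, $\mathcal{N}(T)=\mathcal{N}(S)$ and the orthogonal decomposition $(\mathcal{N}(S)+\mathcal{R}(S))^\bot=\{0\}$ that comes from $S$ being split; positivity of $T$ is not needed here, so nothing new must be proved.

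For the equivalence of convergence I would reuse the observation in the Remark following Theorem \ref{inducido en PLh}: applying \cite[Theorem 4.1]{MR2653816} one computes $(P_SP_T)^\dagger P_SV=Q^*V=P_T-T$, whence $\rho((P_SP_T)^\dagger P_SV)=\|P_T-T\|=\rho(P_TZ)$ (using that $P_T-T\in\mathcal{L}^h$), and by Proposition \ref{convergencia MP y projection spl} this is $<1$ exactly when the projection proper splitting of $T$ converges. Thus the induced splitting of $S$ converges if and only if $T=P_T-V$ does.

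I do not expect any real obstacle: the corollary is essentially a reading of Theorem \ref{inducido en PLh} restricted to the subclass $\mathcal{P}\cdot\mathcal{L}^+$, with the single added input being that the Hermitian factor can be upgraded to a positive one. The only point deserving care is citing the correct statement from \cite{AG-PA} that both produces the positive factor $T$ and identifies its nullspace as $\mathcal{N}(S)$; once that citation is pinned down, every remaining step is already established in the Hermitian version and transfers without change.
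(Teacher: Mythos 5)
Your proposal is correct and follows essentially the same route as the paper: the paper's proof simply cites \cite[Proposition 4.1]{AG-PA} for the existence of the positive factor $T$ with $\mathcal{N}(T)=\mathcal{N}(S)$ and then states that the argument proceeds exactly as in Theorem \ref{inducido en PLh} (including the convergence equivalence from the accompanying remark). Your write-up is just a more detailed unpacking of that same argument, with the only minor discrepancy being the precise citation from \cite{AG-PA} used to produce the positive factor.
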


\begin{proof}
The existence of $T\in\mathcal{L}^+$ with $\mathcal{N}(T)=\mathcal{S}$ such that $S=P_ST$ is  guarantee by \cite[Proposition 4.1]{AG-PA}. Then the proof follows as in the above theorem.
\end{proof}

Consider $S\in\mathcal{L}(\mathcal{H})$ with closed range such that $S=P_ST$, where $T\in\mathcal{L}^+$ and $\mathcal{N}(T)=\mathcal{N}(S)$. The next result shows that the proper splitting of $S$ induced by the projection proper splitting of  $T$  in Corollary \ref{inducido-PA}, converges faster than the polar proper splitting of $S$.

\begin{cor}\label{inducido en PA para S contraccion}
Consider $S\in\mathcal{L}(\cH)$ with closed range. Suppose that  there exists $T\in \mathcal{L}^+$ with $\|T\|\leq 1$, $\mathcal{N}(T)=\mathcal{N}(S)$ and $\mathcal{R}(T)$ closed such that $S=P_ST$. Consider $T=P_T-V$ the projection proper splitting of $T$ and  $S=P_SP_T-P_SV=U_S-W$ the  proper splitting of $S$ induced by  $T$  and the polar proper splitting of $S$, respectively. Then it holds that $\rho((P_SP_T)^\dagger P_SV)\leq \rho(U_S^*W)$. 
\end{cor}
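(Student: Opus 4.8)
The plan is to reduce both spectral radii to operator norms of positive operators, and then to compare those norms through the operator inequality $|S|\le T$.

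First I would identify the two iteration operators. For the induced splitting, the Remark following Theorem~\ref{inducido en PLh} gives $(P_SP_T)^\dagger P_SV=P_T-T$; this uses $\mathcal R(T)=\mathcal R(S^*)$, which holds here because $T\in\mathcal L^+$ has closed range and $\mathcal N(T)=\mathcal N(S)$, whence $\mathcal R(T)=\mathcal N(T)^\bot=\mathcal N(S)^\bot=\mathcal R(S^*)$ and in particular $P_{S^*}=P_T$. For the polar proper splitting $S=U_S-W$, exactly as in the proof of Theorem~\ref{convergencia del polar} one has $U_S^*W=P_{S^*}-|S|=P_T-|S|$. Since $P_T-T$ and $P_T-|S|$ are Hermitian, $\rho((P_SP_T)^\dagger P_SV)=\|P_T-T\|$ and $\rho(U_S^*W)=\|P_T-|S|\|$, so it suffices to prove $\|P_T-T\|\le\|P_T-|S|\|$.

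Next I would establish $|S|\le T$. A direct computation gives $|S|^2=S^*S=TP_ST$, and therefore $T^2-|S|^2=T(I-P_S)T=[(I-P_S)T]^*[(I-P_S)T]\ge 0$, since $I-P_S$ is an orthogonal projection. Thus $|S|^2\le T^2$, and since the square root is operator monotone on $\mathcal L^+$ we conclude $|S|=(|S|^2)^{1/2}\le (T^2)^{1/2}=T$. This passage from $|S|^2\le T^2$ to $|S|\le T$ is the main point of the argument; everything else is formal. (If one prefers to stay within the results of the paper, the step $|S|^2\le T^2 \Rightarrow |S|\le T$ could alternatively be argued via Proposition~\ref{BLT}.)

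Finally I would compare the two norms. From $\|T\|\le 1$ we get $\|S\|=\|P_ST\|\le 1$, hence $\||S|\|=\|S\|\le 1$. Using $TP_T=T$ and $|S|P_T=|S|$ (the latter because $P_{|S|}=P_{S^*}=P_T$), together with the fact that a product of two commuting positive operators is positive, we obtain $P_T-T=(I-T)P_T\ge 0$ and $P_T-|S|=(I-|S|)P_T\ge 0$. From $|S|\le T$ it then follows that $0\le P_T-T\le P_T-|S|$, and since for positive operators $A\le B$ implies $\|A\|\le\|B\|$, we conclude $\|P_T-T\|\le\|P_T-|S|\|$, that is $\rho((P_SP_T)^\dagger P_SV)\le\rho(U_S^*W)$, as desired.
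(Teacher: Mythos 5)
Your proof is correct and follows essentially the same route as the paper: identify both iteration operators as $P_{S^*}-T$ and $P_{S^*}-|S|$, establish $|S|=(TP_ST)^{1/2}\le T$, and compare norms of the resulting ordered positive operators. The only difference is that you justify the key inequality $|S|\le T$ in detail (via $|S|^2\le T^2$ and operator monotonicity of the square root), a step the paper simply asserts.
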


\begin{proof}
Since $S=P_ST$ with $T\in\mathcal{L}^+$ and $\mathcal{N}(T)=\mathcal{N}(S)$ then $\mathcal{R}(T)=\mathcal{R}(S^*)$. Therefore $(P_SP_T)^\dagger P_SV=P_{S^*}-T\geq 0$ by Proposition \ref{BLT}.  Now, note that $|S|=(TP_ST)^{1/2}\leq T$. Then $P_{S^*}-|S|\geq P_{S^*}-T\geq 0$ and so $ \rho(U_S^*W)=\|P_{S^*}-|S|\|\geq \|P_{S^*}-T\|=\rho((P_SP_T)^\dagger P_SV)$. Hence the assertion follows.
\end{proof}

\begin{rem}
Let us see that in Corollary \ref{inducido en PA para S contraccion} the inequality can be strict.  Take $S=P_ST\in\mathcal{P}\cdot\mathcal{L}^+$, where $P_S=\left(\begin{matrix}
 1/2& 1/2 \\
1/2 & 1/2
\end{matrix}\right)$ and  $T=\left(\begin{matrix}
 1/4& 0 \\
0 & 0
\end{matrix}\right)$. It holds that $T\in\mathcal{L}^+$ and $\mathcal{N}(S)=\mathcal{N}(T)$.  The projection proper splitting  of $T$ is $T=P_T-V= \left(\begin{matrix}
 1& 0 \\
0 & 0
\end{matrix}\right) - \left(\begin{matrix}
 3/4& 0 \\
0 & 0
\end{matrix}\right) $.  Then the proper splitting of $S$ induced by the projection proper splitting of $T$ is $S=P_SP_T-P_SV=\left(\begin{matrix}
 1/2& 1/2 \\
1/2 & 1/2
\end{matrix}\right) \left(\begin{matrix}
 1& 0 \\
0& 0
\end{matrix}\right) - \left(\begin{matrix}
 1/2& 1/2 \\
1/2 & 1/2
\end{matrix}\right) \left(\begin{matrix}
 3/4& 0 \\
0& 0
\end{matrix}\right) $. 

Now, $(P_SP_T)^\dagger P_SV=\left(\begin{matrix}
 1/2& 0 \\
1/2 & 0
\end{matrix}\right)^\dagger \left(\begin{matrix}
 1/2& 1/2 \\
1/2 & 1/2
\end{matrix}\right) \left(\begin{matrix}
 3/4& 0 \\
0& 0
\end{matrix}\right) =\left(\begin{matrix}
 1& 1 \\
0& 0
\end{matrix}\right) \left(\begin{matrix}
 1/2& 1/2 \\
1/2 & 1/2
\end{matrix}\right) \left(\begin{matrix}
 3/4& 0 \\
0& 0
\end{matrix}\right) =\left(\begin{matrix}
 3/4& 0 \\
0& 0
\end{matrix}\right)$. Then $\rho((P_SP_T)^\dagger P_SV)=3/4$. 
On the other hand, the polar proper splitting of $S$ is $S=U_S-W=\left(\begin{matrix}
\sqrt{2}/2 & 0 \\
\sqrt{2}/2 & 0
\end{matrix}\right)-\left(\begin{matrix}
\sqrt{2}/2 -1/8& 0 \\
\sqrt{2}/2-1/8 & 0
\end{matrix}\right)$, so that, $U_S^*W=\left(\begin{matrix}
\sqrt{2}/2 & \sqrt{2}/2 \\
0 & 0
\end{matrix}\right) \left(\begin{matrix}
\sqrt{2}/2 -1/8& 0 \\
\sqrt{2}/2-1/8 & 0
\end{matrix}\right)=\left(\begin{matrix}
(8-\sqrt{2})/8& 0 \\
0 & 0
\end{matrix}\right)$. Hence, $\rho(U_S^*W)=1-\sqrt{2}/8$  and therefore $\rho((P_SP_T)^\dagger P_SV)<\rho(U_S^*W)$.
\end{rem}

\begin{pro}\label{acotados por Hermitiano1}
Let $S\in\mathcal{L}(\mathcal{H})$ be a split operator. If $T\in\mathcal{L}^h$ then the following assertions hold:
\begin{enumerate}
\item If $S\overset{*}{\leq} T$ and the projection proper splitting of $T$ converges then the $Q_S$-proper splitting of $S$ converges.
\item If $S\overset{\sharp}{\leq} T$ and the projection proper splitting of $T$ converges then the $Q_S$-proper splitting of $S$ converges.
\end{enumerate}
\end{pro}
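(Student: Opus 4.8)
The plan is to reduce each implication to an application of Proposition~\ref{Q-proper}, which tells us that the $Q_S$-proper splitting of a split operator $S$ converges whenever $\|P_{S^*}(I-S)\|<1$. Recall from the proof of Proposition~\ref{Q-proper} that $Q_S^\dagger W = P_{S^*}(I-S)$ for the $Q_S$-proper splitting $S=Q_S-W$, so the goal in both items is to bound $\rho(Q_S^\dagger W)\le\|P_{S^*}(I-S)\|$ by the quantity $\|P_T-T\|<1$, which (by Proposition~\ref{convergencia MP y projection spl}, item~2) is exactly the convergence condition for the projection proper splitting of $T$.

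For item~1, I would start from $S\overset{*}{\leq}T$, which by Definition~\ref{ordenes} gives $S=P_ST$ and $S^*=P_{S^*}T^*$; since $T\in\mathcal{L}^h$ the latter reads $S=TP_{S^*}$. The plan is then to write
\begin{equation}
P_{S^*}(I-S)=P_{S^*}-P_{S^*}S=P_{S^*}-P_{S^*}TP_{S^*},\nonumber
\end{equation}
using $S=TP_{S^*}$ and $P_{S^*}^2=P_{S^*}$. Next I would insert $P_T$ using the star-order consequence $\mathcal{R}(S^*)\subseteq\mathcal{R}(T^*)=\mathcal{R}(T)$, so $P_TP_{S^*}=P_{S^*}$, and rewrite the expression as $P_{S^*}(P_T-T)P_{S^*}$. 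Taking norms and using that $P_{S^*}$ is a contraction yields $\|P_{S^*}(I-S)\|\le\|P_T-T\|<1$, and Proposition~\ref{Q-proper} finishes the argument.

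For item~2, the input is $S\overset{\sharp}{\leq}T$, which by Definition~\ref{ordenes} provides an oblique projection $Q$ with $\mathcal{R}(Q)=\mathcal{R}(S)$, $\mathcal{N}(Q)=\mathcal{N}(S)$ and $S=QT=TQ$; in particular $Q=Q_S$. The plan is to exploit $Q_S^\dagger Q_S=P_{S^*}$ together with $S=Q_ST$ to compute $\|Q_S^\dagger W\|=\|Q_S^\dagger Q_S(I-T)\|=\|P_{S^*}(I-T)\|$, paralleling the sharp-order computation already carried out earlier in the paper. Then, again using $\mathcal{R}(S^*)\subseteq\mathcal{R}(T)$ to get $P_{S^*}P_T=P_{S^*}$, I would factor $P_{S^*}(I-T)=P_{S^*}(P_T-T)$ and conclude $\rho(Q_S^\dagger W)\le\|P_{S^*}(P_T-T)\|\le\|P_T-T\|<1$.

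The routine part is the projection bookkeeping; the one place to be careful is justifying the range inclusion $\mathcal{R}(S^*)\subseteq\mathcal{R}(T)$ that lets me absorb $P_T$, and, in item~1, correctly turning $S^*=P_{S^*}T^*$ into $S=TP_{S^*}$ via the hypothesis $T\in\mathcal{L}^h$. I expect the main obstacle to be making sure the quantity controlling convergence is genuinely $\|P_T-T\|$ (the projection proper splitting of $T$) rather than some other norm, i.e.\ correctly matching the intermediate expression $P_{S^*}(P_T-T)P_{S^*}$ (resp.\ $P_{S^*}(P_T-T)$) to the hypothesis via Proposition~\ref{convergencia MP y projection spl}; everything else is a short contraction estimate.
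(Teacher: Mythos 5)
Your argument is correct and follows essentially the same route as the paper: reduce both items to the bound $\rho(Q_S^\dagger W)\leq\|P_{S^*}(I-S)\|$ (resp.\ $\|P_{S^*}(I-T)\|$), insert $P_T$ via the range inclusion $\mathcal{R}(S^*)\subseteq\mathcal{R}(T)$, and dominate by $\|P_T-T\|<1$ from Proposition \ref{convergencia MP y projection spl}. In fact you supply more detail for item 2 than the paper, which only remarks that it is similar to item 1.
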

\begin{proof}
	Consider $T=P_T-Z$ the projection splitting of $T$ and $S=Q_S-W$  the $Q_S$-proper splitting of $S$:
	
	1. Since $S\overset{*}{\leq} T$ and $T\in\mathcal{L}^h$ then $\mathcal{R}(S)\subseteq \mathcal{R}(T)$, $\mathcal{R}(S^*)\subseteq \mathcal{R}(T)$ and $S=P_ST=TP_{S^*}$. Observe that $\rho(Q_S^\dagger W)\leq \| Q_S^\dagger W\|=\|P_{S^*}(1-S)\|=\|P_{S^*}P_{T}(1-T)P_{S^*}\|\leq \|P_{T}(I-T)\|$. Since the projection splitting of $T$ converges then, by Proposition \ref{convergencia MP y projection spl}, it holds that $\|P_{T}(I-T)\|<1$. Therefore, the $Q_S$-proper splitting of $S$ converges.
	
	2. It is similar to the proof of item 1.
\end{proof}

The next result allows to give an extension of the above proposition for the case of the minus order as we will see in Remark \ref{extension a minus order}. 

\begin{pro}\label{minus para projection de T y QS de S}
Let $S\in\mathcal{L}(\mathcal{H})$ be a split operator and $T\in\mathcal{L}^h$ such that  $\mathcal{R}(S^*)\subseteq \mathcal{R}(T)$ and $S^*=QT$, for some $Q\in\mathcal{Q}$ with $\mathcal{R}(Q)=\mathcal{R}(S^*)$ and $\|Q\|<\frac{1}{\|P_T-T\|}$. If the projection proper splitting of $T$ converges then the $Q_S$-proper splitting of $S$ converges.
\end{pro}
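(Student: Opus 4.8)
The plan is to verify the sufficient convergence condition $\rho(Q_S^\dagger W)<1$ supplied by Corollary \ref{corrho}, where $S=Q_S-W$ denotes the $Q_S$-proper splitting of $S$. Since $\rho(Q_S^\dagger W)\le\|Q_S^\dagger W\|$, it suffices to bound this norm strictly below $1$. First I would recall, exactly as in the proof of Proposition \ref{Q-proper}, that
\[
Q_S^\dagger W=P_{S^*}P_SW=P_{S^*}W=P_{S^*}(Q_S-S)=P_{S^*}(I-S),
\]
where $P_{S^*}Q_S=P_{S^*}$ holds because $\mathcal{R}(I-Q_S)=\mathcal{N}(S)=\mathcal{N}(P_{S^*})$. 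Taking adjoints and using $P_{S^*}=P_{S^*}^*$ turns the target into $\|Q_S^\dagger W\|=\|(I-S^*)P_{S^*}\|$, which is the quantity I will estimate.

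Next I would feed in the three structural hypotheses to factor $(I-S^*)P_{S^*}$ through $P_T-T$. Since $\mathcal{R}(Q)=\mathcal{R}(S^*)$ and $Q$ is idempotent, $Q$ acts as the identity on $\mathcal{R}(S^*)$, so $QP_{S^*}=P_{S^*}$; since $\mathcal{R}(S^*)\subseteq\mathcal{R}(T)$, one has $P_TP_{S^*}=P_{S^*}$; and because $T\in\mathcal{L}^h$ we have $P_{T^*}=P_T$ and hence $TP_T=T$. Substituting $S^*=QT$ and inserting these identities,
\[
(I-S^*)P_{S^*}=P_{S^*}-QTP_{S^*}=QP_{S^*}-QTP_{S^*}=Q(I-T)P_{S^*}=Q(I-T)P_TP_{S^*}=Q(P_T-T)P_{S^*},
\]
where the last step uses $(I-T)P_T=P_T-TP_T=P_T-T$.

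The estimate is then immediate:
\[
\rho(Q_S^\dagger W)\le\|Q_S^\dagger W\|=\|Q(P_T-T)P_{S^*}\|\le\|Q\|\,\|P_T-T\|\,\|P_{S^*}\|\le\|Q\|\,\|P_T-T\|.
\]
Because the projection proper splitting of $T$ converges, Proposition \ref{convergencia MP y projection spl} gives $\|P_T-T\|<1$, so the hypothesis $\|Q\|<1/\|P_T-T\|$ yields $\|Q\|\,\|P_T-T\|<1$; hence $\rho(Q_S^\dagger W)<1$ and the $Q_S$-proper splitting of $S$ converges by Corollary \ref{corrho}. The only genuinely delicate points are the projection identities $P_{S^*}Q_S=P_{S^*}$, $QP_{S^*}=P_{S^*}$ and $P_TP_{S^*}=P_{S^*}$; once these are established from the range and nullspace hypotheses, the argument reduces to a one-line norm bound, so I expect no serious obstacle beyond careful bookkeeping of ranges and nullspaces.
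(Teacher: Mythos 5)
Your proposal is correct and follows essentially the same route as the paper: compute $Q_S^\dagger W=P_{S^*}(I-S)$, substitute $S^*=QT$, use $QP_{S^*}=P_{S^*}$ and $P_TP_{S^*}=P_{S^*}$ to factor the expression as $Q(P_T-T)P_{S^*}$, and conclude with the norm bound $\|Q\|\,\|P_T-T\|<1$. Your write-up is in fact slightly cleaner, since the paper's displayed chain contains a typo ($QP_T(I-P_T)P_{S^*}$ where $Q(P_T-T)P_{S^*}$ is meant) that your version avoids.
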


\begin{proof}
  Let $T\in\mathcal{L}^h$ such that  $\mathcal{R}(S^*)\subseteq \mathcal{R}(T)$  and $S^*=QT$, where $Q\in\mathcal{Q}$ and $\mathcal{R}(Q)=\mathcal{R}(S^*)$. The $Q_S$ proper splitting of $S$ is $S=Q_S-W$, where $Q_S=Q_{\mathcal{R}(S)//\mathcal{N}(S)}$. Then $Q_S^\dagger W=P_{S^*}P_S(Q_S-S)=P_S^*(I-S)$. Now, since $\mathcal{R}(S^*)\subseteq \mathcal{R}(T)$ we get that $\|P_{S^*}(I-S)\|=\|(I-S^*)P_{S^*}\|=\|(I-QT)P_{S^*}\|=\|(Q-QT)P_{S^*}\|=\|Q(I-T)P_{S^*}\|=\|QP_T(I-P_T)P_{S^*}\|\leq \|Q\| \|P_T-T\|<1$.
\end{proof}

\begin{rem}\label{extension a minus order}
Suppose  that $S\in\mathcal{L}(\mathcal{H})$ is a split operator and $T\in\mathcal{L}^h$ are such that $S\overset{-}\leq T$. This means that $S=Q_1T$ and $S^*=Q_2T$, for some $Q_1,Q_2\in\mathcal{Q}$ with $\mathcal{R}(Q_1)=\mathcal{R}(S)$, $\mathcal{R}(Q_2)=\mathcal{R}(S^*)$. Observe that  if  $\|Q_2\|<\frac{1}{\|P_T-T\|}$ then we are in the case of  proposition \ref{minus para projection de T y QS de S}. 
\end{rem}

\begin{pro}\label{acotacion para MP}
Consider $S,T\in\mathcal{L}^h$   closed range operators  such that $S\overset{*}{\leq} T$. If the MP-proper splitting of $T$ converges then the MP-proper splitting of $S$ converges.
\end{pro}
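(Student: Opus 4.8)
The plan is to reduce both convergence assertions to the norm criterion of Proposition \ref{convergencia MP y projection spl}(1), and then transfer the resulting bound from $T$ to $S$ using the structure of the star order. By that proposition, the MP-proper splitting of $T$ converges if and only if $\|P_T-T^2\|<1$, and likewise the MP-proper splitting of $S$ converges if and only if $\|P_S-S^2\|<1$. Thus the entire task becomes showing that the hypothesis $\|P_T-T^2\|<1$ forces $\|P_S-S^2\|<1$.

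The main idea is to extract from $S\overset{*}{\leq}T$ the two factorizations $S=P_S T$ and $S=T P_S$. By Definition \ref{ordenes}(1) we may take $S=P_S T$ and $S^*=P_{S^*}T^*$; since $S,T\in\mathcal{L}^h$ we have $P_{S^*}=P_S$ and $T^*=T$, so taking adjoints in the second equality yields $S=T P_S$. Consequently $S^2=(P_S T)(T P_S)=P_S T^2 P_S$. Moreover $S=T P_S$ gives $\mathcal{R}(S)\subseteq\mathcal{R}(T)$, and since $\mathcal{R}(T)$ is closed this produces $P_T P_S=P_S$ and, by self-adjointness, $P_S P_T=P_S$.

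With these identities I would compute
$$
P_S-S^2=P_S-P_S T^2 P_S=P_S P_T P_S-P_S T^2 P_S=P_S(P_T-T^2)P_S,
$$
where the second equality uses $P_S P_T P_S=P_S$. Then
$$
\|P_S-S^2\|=\|P_S(P_T-T^2)P_S\|\leq \|P_S\|\,\|P_T-T^2\|\,\|P_S\|=\|P_T-T^2\|<1,
$$
which by Proposition \ref{convergencia MP y projection spl}(1) gives the convergence of the MP-proper splitting of $S$.

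There is no substantial obstacle here: the only steps requiring care are establishing the commuting factorizations $S=P_S T=T P_S$ from the Hermitian star order and confirming the range inclusion $\mathcal{R}(S)\subseteq\mathcal{R}(T)$, so that the sandwiching identity $P_S(P_T-T^2)P_S=P_S-S^2$ is valid; the norm estimate is then immediate from $\|P_S\|\le 1$.
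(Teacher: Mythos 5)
Your proof is correct and follows essentially the same route as the paper: both reduce the two convergence statements to the norm criterion $\|P_T-T^2\|<1$ and $\|P_S-S^2\|<1$ of Proposition \ref{convergencia MP y projection spl}, use $S=P_ST=TP_S$ and $\mathcal{R}(S)\subseteq\mathcal{R}(T)$ to write $P_S-S^2=P_S(P_T-T^2)P_S$ (the paper writes this as $P_SP_T(I-T^2)P_S$, which is the same operator since $P_TT^2=T^2$), and conclude by the submultiplicative norm estimate. No gaps.
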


\begin{proof}
Since $S\overset{*}{\leq} T$ then $S=P_ST=TP_S$ and $\mathcal{R}(S)\subseteq \mathcal{R}(T)$ . If the MP-splitting of $T$ converges then, by Proposition \ref{convergencia MP y projection spl},  it holds that $\|P_T-T^2\|<1$. Now, $ \|P_S-S^2\|=\|P_S-P_S T^2P_S\|=\|P_S(I-T^2)P_S\|=\|P_SP_T(I-T^2)P_S\|\leq \|P_T(I-T^2)\|<1$.  Therefore the assertion follows by Proposition \ref{convergencia MP y projection spl}.
	
\end{proof}

\subsection{An application to symmetric approximations of frames}

In this section we will apply the results obtained to find the synthesis operator of the symmetric approximation of a frame in a Hilbert space. First, we state  a consequence of Proposition \ref{polarcompacto} for the  polar proper splitting of $T$  that  will be useful in this subsection: 

\begin{pro}
Consider $T\in\mathcal{L}(\mathcal{H})$ with closed range. If $T=U_T-V$ is the polar proper splitting of $T$ then  the following statements are equivalent:
\begin{enumerate}
\item $U_T^*V\in\mathcal{K}$
\item $|T|^\dagger-P_{T^*}\in\mathcal{K}$;
\item $|T|-P_{T^*}\in\mathcal{K}$;
\item $U_T-T\in\mathcal{K}$.
\end{enumerate}
\end{pro}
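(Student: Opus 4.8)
The plan is to reduce everything to a single algebraic identity for the polar proper splitting together with the fact that $\mathcal{K}$ is a two-sided ideal of $\mathcal{L}(\mathcal{H})$. First I would record the basic relations of the polar decomposition: since $T=U_T|T|$ with $U_T^*U_T=P_{T^*}$, $\mathcal{N}(U_T)=\mathcal{N}(T)$ and $\mathcal{R}(|T|)=\mathcal{R}(T^*)$, one has $U_T=U_TP_{T^*}$, $P_{T^*}|T|=|T|$ and $U_T^\dagger=U_T^*$. Writing $V=U_T-T$, a direct computation gives $U_T^*V=U_T^*U_T-U_T^*U_T|T|=P_{T^*}-P_{T^*}|T|=P_{T^*}-|T|$ and, dually, $V=U_TP_{T^*}-U_T|T|=U_T(P_{T^*}-|T|)$.

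With these identities in hand, the equivalence $1\Leftrightarrow 4$ is just Proposition \ref{polarcompacto} specialized to the polar proper splitting: there $U=U_T$, so item $2$ of that proposition reads $U_T^\dagger V=U_T^*V\in\mathcal{K}$, which is our item $1$, and item $3$ reads $V=U_T-T\in\mathcal{K}$, which is our item $4$. The equivalence $1\Leftrightarrow 3$ is then immediate, since $U_T^*V=P_{T^*}-|T|=-(|T|-P_{T^*})$ and $\mathcal{K}$ is a linear subspace.

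The only non-formal step is $2\Leftrightarrow 3$, which transfers compactness between $|T|-P_{T^*}$ and $|T|^\dagger-P_{T^*}$. Here I would use the Moore-Penrose relations for the positive closed range operator $|T|$, namely $|T|^\dagger|T|=|T||T|^\dagger=P_{T^*}$ together with $|T|^\dagger P_{T^*}=|T|^\dagger$ and $|T|P_{T^*}=|T|$ (all of which follow from $\mathcal{R}(|T|)=\mathcal{R}(T^*)$ and $\mathcal{N}(|T|)=\mathcal{N}(T)$, since $\mathcal{R}(T)$ closed forces $\mathcal{R}(|T|)$ closed). These yield the two factorizations $|T|^\dagger-P_{T^*}=|T|^\dagger(P_{T^*}-|T|)$ and $P_{T^*}-|T|=|T|(|T|^\dagger-P_{T^*})$. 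Since $\mathcal{K}$ is an ideal, left multiplication of a compact operator by the bounded operators $|T|^\dagger$ or $|T|$ stays compact, so item $3$ implies item $2$ through the first factorization and item $2$ implies item $3$ through the second.

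The main (and essentially only) obstacle is getting the two factorizations in $2\Leftrightarrow 3$ correct; everything else is either a citation of Proposition \ref{polarcompacto} or an invocation of the ideal property of $\mathcal{K}$. I expect no genuine difficulty here, since closedness of $\mathcal{R}(T)$ guarantees that $|T|$ has closed range and that all the Moore-Penrose identities employed are available and involve only bounded operators.
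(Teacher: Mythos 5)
Your proposal is correct and follows essentially the same route as the paper: the equivalence $1\leftrightarrow 4$ is obtained from Proposition \ref{polarcompacto}, and $2\leftrightarrow 3$ from the factorizations $|T|^\dagger-P_{T^*}=|T|^\dagger(P_{T^*}-|T|)$ and $P_{T^*}-|T|=|T|(|T|^\dagger-P_{T^*})$, which are (up to sign) exactly the identities the paper uses. The only cosmetic difference is that you close the cycle via the identity $U_T^*V=P_{T^*}-|T|$ giving $1\leftrightarrow 3$ directly, whereas the paper links $3\leftrightarrow 4$ with the factorizations $U_T-T=U_T(P_{T^*}-|T|)$ and $P_{T^*}-|T|=U_T^*(U_T-T)$; both are immediate and equivalent.
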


\begin{proof}

$1\leftrightarrow 4.$ It follows from Proposition \ref{polarcompacto}.

$2\leftrightarrow 3.$ It follows from the fact that $|T|-P_{T^*}=|T|(P_{T^*}-|T|^\dagger)$ and $|T|^\dagger-P_{T^*}=|T|^\dagger(P_{T^*}-|T|)$.

$3 \leftrightarrow 4.$ It follows from the fact  that $(U_T-T)=U_T(P_{T^*}-|T|)$ and $P_{T^*}-|T|=U_T^*(U_T-T)$.
\end{proof}

A sequence $\{f_i\}_{i\in\mathbb{N}}\subseteq \mathcal{H}$ is a frame for a subspace $\mathcal{S}\subseteq\mathcal{H}$ if there exist constants $A,B>0$ such that 
$$
A \|x\|^2 \leq \sum\limits_{i=i}^{\infty}|\langle{x,f_i}\rangle|^2\leq B\|x\|^2,
$$
for all $x\in\mathcal{S}$. If $A=B$ the frame is called tight and if $A=B=1$ the frame is said normalized tight. Two frames $\{f_i\}_{i\in\mathbb{N}}$ and $\{g_i\}_{i\in\mathbb{N}}$ for subspaces $\mathcal{S}$ and $\mathcal{T}$ respectively, are called weakly similar if  there exists a bounded linear invertible operator $T:\mathcal{S}\rightarrow \mathcal{T}$ such that $T(f_i)=g_i$ for all $i\in\mathbb{N}$.

A normalized tight frame $\{v_i\}_{i\in\mathbb{N}}$ in $\mathcal{T}\subseteq \mathcal{H}$  is said to be a symmetric approximation of $\{f_i\}_{i\in\mathbb{N}}$ if the inequality
$$
\sum\limits_{i=1}^{\infty}\|u_i-f_i\|^2 \geq \sum\limits_{i=1}^{\infty}\|v_i-f_i\|^2
$$
holds for all normalized tight frames $\{u_i\}_{i\in\mathbb{N}}$ which are weakly similar to $\{f_i\}_{i\in\mathbb{N}}$ and the sum of the right side is finite.

Given a frame $\{f_i\}_{i\in\mathbb{N}}\subseteq \mathcal{H}$, the operator  $F:\ell^2\rightarrow \mathcal{H}$ defined by $F(e_i)=f_i$ is the synthesis operator associated to $\{f_i\}$, where $\{e_i\}$ is the standart orthonormal basis of $\ell^2$.  It is known \cite[Theorem 2.3]{FPT} that given  a frame $\{f_i\}_{i\in\mathbb{N}}\subseteq \mathcal{H}$ there exists a symmetric approximation of it if $P_{F^*}-|F|\in\mathcal{L}^2$, where $\mathcal{L}^2$ is the Hilbert-Schmidt class of $\mathcal{L}(\cH)$. Moreover, the symmetric approximation is given by the frame $\{U_F e_i\}_{i\in\mathbb{N}}$, where again $\{e_i\}$ is the canonical orthonormal basis of $\ell^2$ and $U_F$ is the partial isometry of the polar decomposition of $F$, $F=U_F|F|$.
Now, if we consider the equation $|F|X=F^*$ it is well known that $U_F^*$ is its reduced Douglas solution. Then, we can consider the projection proper splitting  $|F|=P_{F^*}-V$ of $|F|$ (observe that this proper splitting is the polar proper splitting of $|F|$) to obtain the operator $U_F^*$.  We summarize in the following result this fact:

\begin{pro}
Consider  $\{f_i\}_{i\in\mathbb{N}}$ a frame for a subspace $\mathcal{S}\subseteq \mathcal{H}$ and $|F|=P_{F^*}-V$  the projection proper splitting of $|F|$. If  $P_{F^*}-|F|\in\mathcal{L}^2$ and $\|P_{F^*}-|F|\|<1$ then the iterative process 
 $$
 X^{i+1}=(P_{F^*}-|F|)X^i+F^*,
 $$
converges to the adjoint  of the synthesis operator of the symmetric approximation of  $\{f_i\}_{i\in\mathbb{N}}$.
\end{pro}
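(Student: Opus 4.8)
The plan is to recognize the stated iterative process as an instance of the general iterative process (\ref{proceso iterativo}) applied to the projection proper splitting of $|F|$, and then to invoke the convergence machinery already assembled in the paper. Concretely, write $|F|=U-V$ with $U=P_{F^*}$ and observe that $|F|\in\mathcal{L}^+$ has closed range (because $\{f_i\}$ is a frame, so $F$ has closed range and hence so does $|F|=(F^*F)^{1/2}$), so that $|F|=P_{F^*}-V$ is a genuine projection proper splitting of the Hermitian operator $|F|$. The key point to make explicit is that, since $U=P_{F^*}$ is an orthogonal projection onto $\mathcal{R}(|F|)=\mathcal{R}(F^*)$, one has $U^\dagger=P_{F^*}$ and the iteration matrix of (\ref{proceso iterativo}) for the subspace $\eme=\mathcal{N}(|F|)^\bot=\mathcal{R}(F^*)$ reduces to $U^\dagger V=P_{F^*}V=P_{F^*}(P_{F^*}-|F|)=P_{F^*}-|F|$, using $\mathcal{R}(|F|)=\mathcal{R}(F^*)$. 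This identifies the iteration coefficient in the displayed recursion as exactly $U^\dagger V$, so the stated process is precisely the iterative process of the projection proper splitting of $|F|$ with respect to $W=F^*$.

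Next I would verify convergence and identify the limit. The solvability hypothesis $\mathcal{R}(F^*)\subseteq\mathcal{R}(|F|)$ holds with equality, so $W=F^*$ satisfies $\mathcal{R}(W)\subseteq\mathcal{R}(|F|)$ as required by Theorem \ref{convergencia}. By Proposition \ref{convergencia MP y projection spl}, item 2, the projection proper splitting of $|F|$ converges if and only if $\|P_{|F|}-|F|\|<1$; since $P_{|F|}=P_{F^*}$, this is exactly the assumed condition $\|P_{F^*}-|F|\|<1$. Therefore the splitting converges, and by Theorem \ref{convergencia} the iteration converges, for every initial $X^0$, to the reduced solution for $\eme=\mathcal{R}(F^*)$ of the equation $|F|X=F^*$, i.e.\ to the Douglas reduced solution $X_r$.

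The remaining step is to identify this Douglas reduced solution with $U_F^*$. This is supplied by the discussion immediately preceding the statement: $U_F^*$ is the reduced Douglas solution of $|F|X=F^*$. I would record this briefly by noting $|F|U_F^*=|F|\,U_F^*$ equals $F^*$ from the polar decomposition $F=U_F|F|$ (so $F^*=|F|U_F^*$) together with $\mathcal{R}(U_F^*)\subseteq\mathcal{R}(|F|)=\mathcal{N}(|F|)^\bot$, which characterizes $X_r$ uniquely by Douglas' theorem. Finally, since $\{U_Fe_i\}_{i\in\mathbb{N}}$ is the symmetric approximation of $\{f_i\}$ and $U_F$ is its synthesis operator, $U_F^*$ is exactly the adjoint of that synthesis operator, giving the claimed conclusion.

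The only genuine content beyond bookkeeping is the identification $U^\dagger V=P_{F^*}-|F|$ and the matching of the convergence criterion; both rest on the equality $\mathcal{R}(|F|)=\mathcal{R}(F^*)$ (equivalently $P_{|F|}=P_{F^*}$), so I expect the main point requiring care to be justifying that the orthogonal projection appearing in the recursion is indeed $P_{F^*}$ and that it coincides with the iteration coefficient $U^\dagger V$ of the projection proper splitting. Everything else follows directly from Theorem \ref{convergencia}, Proposition \ref{convergencia MP y projection spl}, and the characterization of $U_F^*$ as the Douglas reduced solution recalled above; indeed the author's proof is likely to be the single phrase \emph{It is immediate} or a two-line reduction to these already-established facts.
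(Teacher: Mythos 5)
Your proposal is correct and follows exactly the route of the paper, whose proof is the one-line citation of \cite[Theorem 2.3]{FPT}, Theorem \ref{convergencia} and Proposition \ref{convergencia MP y projection spl}; your write-up simply makes explicit the identifications $U^\dagger V=P_{F^*}-|F|$, $Z_\eme=F^*$ and $X_r=U_F^*$ that the authors leave implicit. No gaps.
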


\begin{proof}
It follows from \cite[Theorem 2.3]{FPT}, Theorem \ref{convergencia} and Proposition \ref{convergencia MP y projection spl}.
\end{proof}

\begin{rem}
Observe that from Theorem \ref{condiciones suficientes de convergencia} if $P_{F^*}-|F|\in \mathcal{L}^+\cap\mathcal{L}^2$ then the projection proper splitting $|F|=P_{F^*}-V$ of  $|F|$, converges. So that, $\|P_{F^*}-|F|\|<1$.  However, condition $\|P_{F^*}-|F|\|<1$ does not imply $P_{F^*}-|F|\in\mathcal{L}^+$, in general. Therefore, with this particular proper splitting we get a weaker condition that the one given in Theorem \ref{condiciones suficientes de convergencia} to guarantee the convergence. 
\end{rem}

\section{A few more on   induced splittings}

In this section we study  induced splittings for the case of two operators related by an invertible operator.

\begin{pro}
Consider $S\in \mathcal{L}(\mathcal{H})$ and $T\in\mathcal{L}^h$ such that $S=TG$ for some $G\in\mathcal{G}$. Then the projection proper splitting $T=P_T-V$ of $T$ induces a proper splitting of $S$, namely, $S=P_TG-VG$.  Moreover,  the projection proper splitting of $T$ converges if and only if the $S=P_TG-VG$ converges.
\end{pro}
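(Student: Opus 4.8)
The plan is to treat the two assertions separately, reducing everything to the spectral-radius criterion of Corollary \ref{corrho}. First I would record that, since the projection proper splitting $T=P_T-V$ is assumed to exist, $T$ is a closed range Hermitian operator and $V=P_T-T$. Because $G$ is invertible, $S=TG=(P_T-V)G=P_TG-VG$ has closed range ($\mathcal{R}(S)=\mathcal{R}(TG)=\mathcal{R}(T)$), so that the notion of a proper splitting of $S$ is meaningful and the candidate decomposition is the claimed $S=P_TG-VG$.

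For the first assertion I would verify the two defining conditions of a proper splitting for $U_S:=P_TG$. Since $G$ is surjective, $\mathcal{R}(P_TG)=\mathcal{R}(P_T)=\mathcal{R}(T)=\mathcal{R}(S)$. For the nullspace, $\mathcal{N}(P_TG)=G^{-1}(\mathcal{N}(P_T))=G^{-1}(\mathcal{R}(T)^{\perp})=G^{-1}(\mathcal{N}(T))$, where the last equality uses $T=T^{*}$; and $G^{-1}(\mathcal{N}(T))=\mathcal{N}(TG)=\mathcal{N}(S)$. Hence $\mathcal{R}(U_S)=\mathcal{R}(S)$ and $\mathcal{N}(U_S)=\mathcal{N}(S)$, so $S=P_TG-VG$ is indeed a proper splitting of $S$.

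For the convergence equivalence, by Corollary \ref{corrho} each splitting converges iff the spectral radius of its iteration operator is $<1$, so it suffices to compare $\rho\big((P_TG)^{\dagger}VG\big)$ with $\rho(P_T^{\dagger}V)$. For the $T$-splitting, $P_T^{\dagger}=P_T$ and $P_TV=P_T(P_T-T)=P_T-T=V$, so its iteration operator is simply the Hermitian operator $V=P_T-T$. The core computation is the identity $VG(P_TG)^{\dagger}=V$. To obtain it I would set $R:=G(P_TG)^{\dagger}$ and use $(P_TG)(P_TG)^{\dagger}=P_{\mathcal{R}(P_TG)}=P_T$, which gives $P_TR=P_T$, hence $\mathcal{R}(R-I)\subseteq\mathcal{N}(P_T)=\mathcal{N}(T)$. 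Then $V(R-I)=(P_T-T)(R-I)=0$, because on $\mathcal{N}(T)=\mathcal{N}(P_T)$ both $T$ and $P_T$ vanish; therefore $VR=V$, i.e. $VG(P_TG)^{\dagger}=V$. Applying $\sigma(AB)\setminus\{0\}=\sigma(BA)\setminus\{0\}$ with $A=(P_TG)^{\dagger}$ and $B=VG$ yields $\sigma\big((P_TG)^{\dagger}VG\big)\setminus\{0\}=\sigma(V)\setminus\{0\}$, whence $\rho\big((P_TG)^{\dagger}VG\big)=\rho(V)=\rho(P_T^{\dagger}V)$ (the value $0$ cannot affect the comparison, since the spectrum of a bounded operator is nonempty). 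The equivalence of convergence follows at once.

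The main obstacle is precisely the identity $VG(P_TG)^{\dagger}=V$: the reverse-order law fails for $(P_TG)^{\dagger}$, so one cannot naively write $(P_TG)^{\dagger}=G^{-1}P_T$, and the two iteration operators are not conjugate by $G$ in any obvious way. The resolution is to avoid exact similarity altogether and instead exploit $\sigma(AB)\setminus\{0\}=\sigma(BA)\setminus\{0\}$, combined with the Hermitian property of $T$ (through $\mathcal{R}(T)^{\perp}=\mathcal{N}(T)$) to annihilate the discrepancy $R-I$, which lives entirely in $\mathcal{N}(T)$.
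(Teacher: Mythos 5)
Your proof is correct, and its overall strategy coincides with the paper's: both verify the range and nullspace conditions for $P_TG$ in essentially the same way, and both reduce the convergence equivalence to the invariance of the nonzero spectrum under cyclic permutation, landing on $\rho\bigl((P_TG)^{\dagger}VG\bigr)=\rho(P_T-T)$. Where you genuinely diverge is in how the iteration operator is tamed. The paper rewrites $P_TG=T^{\dagger}TG=T^{\dagger}S$, invokes the reverse order law of Proposition \ref{dagger de AB con rg iguales} to get $(T^{\dagger}S)^{\dagger}=S^{\dagger}T$, computes $(T^{\dagger}S)^{\dagger}VG=S^{\dagger}(P_T-T)S$, and then cycles twice through $\sigma(AB)\cup\{0\}=\sigma(BA)\cup\{0\}$. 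You instead prove the single identity $VG(P_TG)^{\dagger}=V$ directly from $(P_TG)(P_TG)^{\dagger}=P_{\mathcal{R}(P_TG)}=P_T$ together with the fact that $V=P_T-T$ annihilates $\mathcal{N}(P_T)=\mathcal{N}(T)$ (this is where $T=T^{*}$ enters), and then cycle once. Your route buys a more elementary argument — it bypasses the reverse order law entirely, whose applicability here requires checking two range inclusions — at the cost of a slightly less explicit description of the iteration operator (the paper's form $S^{\dagger}(P_T-T)S$ exhibits it as a conjugate-like expression, which is conceptually suggestive). Your parenthetical care about the value $0$ in comparing spectral radii is also handled correctly; the paper glosses over the same point. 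Both arguments are complete.
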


\begin{proof}
Observe that $S=TG=P_TG-VG$. Let us see that $\mathcal{R}(P_TG)=\mathcal{R}(S)$ and $\mathcal{N}(P_TG)=\mathcal{N}(S)$. In fact, $\mathcal{R}(P_TG)=\mathcal{R}(P_T)=\mathcal{R}(T)=\mathcal{R}(S)$. In addition, since  $P_TG=T^\dagger TG$ we get that  $\mathcal{N}(S)=\mathcal{N}(TG)\subseteq \mathcal{N}(P_TG)$. Now, if $0=P_TGx=T^\dagger TGx$ then  $TGx\in\mathcal{R}(T)\cap\mathcal{R}(T)^\bot=\{0\}$, so that $x\in\mathcal{N}(TG)=\mathcal{N}(S)$. Thus, $\mathcal{N}(P_TG)=\mathcal{N}(S)$. Then $S=P_TG-VG$ is a proper splitting of $S$. Observe that $P_TG=T^\dagger TG=T^\dagger S$. Finally, $(T^\dagger S)^\dagger VG=S^\dagger TVG=S^\dagger T(P_T-T)G=S^\dagger (TG-T^2G)=S^\dagger (S-TS)=S^\dagger (P_T-T)S$, where the first equality holds by Proposition \ref{dagger de AB con rg iguales}. Now, as $\sigma((T^\dagger S)^\dagger VG)\cup\{0\}=\sigma{(S^\dagger (P_T-T)S)}\cup\{0\}=\sigma((P_T-T)SS^\dagger)\cup\{0\}=\sigma((P_T-T)P_T)\cup\{0\}=\sigma((P_T-T))\cup\{0\}$ then we get that $\rho((T^\dagger S)^\dagger VG)=\rho({P_T-T})$, and the assertion follows.
\end{proof}

\begin{pro}\label{splitting inducido}
Consider $S,T\in\mathcal{L}(\mathcal{H})$ such that $S=TW$, for some $W\in\mathcal{U}$. If $T=U-V$ is a proper splitting of $T$ then $S=UW-VW$ is a proper splitting of $S$. Moreover, it holds that the proper splitting of $T$ converges if and only if the proper splitting $S=UW-VW$  converges. 
\end{pro}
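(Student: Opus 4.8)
The plan is to handle the two assertions separately: first confirm that $S=UW-VW$ satisfies the range and nullspace conditions of a proper splitting, and then reduce the convergence equivalence to a spectral-radius identity via Corollary \ref{corrho}. Since $W\in\mathcal{U}$ is unitary it is invertible with $W^{-1}=W^*$ and $\mathcal{R}(W)=\mathcal{H}$, so right-multiplication by $W$ preserves ranges and transports nullspaces through $W^*$. Concretely, using that $T=U-V$ is proper I would write $\mathcal{R}(UW)=U(\mathcal{R}(W))=\mathcal{R}(U)=\mathcal{R}(T)=\mathcal{R}(TW)=\mathcal{R}(S)$, which is closed, so that $S$ is a closed range operator. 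For the nullspaces, $UWx=0$ holds iff $Wx\in\mathcal{N}(U)=\mathcal{N}(T)$ iff $x\in W^*\mathcal{N}(T)=\mathcal{N}(TW)=\mathcal{N}(S)$, whence $\mathcal{N}(UW)=\mathcal{N}(S)$. Together these give that $S=UW-VW$ is a proper splitting of $S$.

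For the convergence equivalence I would first establish the factorization of the Moore--Penrose inverse $(UW)^\dagger=W^*U^\dagger$. Both $U$ (since $\mathcal{R}(U)=\mathcal{R}(T)$ is closed) and $W$ have closed range, and $UW$ has closed range because $\mathcal{R}(UW)=\mathcal{R}(U)$; moreover the hypotheses of Proposition \ref{dagger de AB con rg iguales} reduce to trivial inclusions here, since $\mathcal{R}(U^*UW)\subseteq\mathcal{R}(W)=\mathcal{H}$ and $\mathcal{R}(WW^*U^*)=\mathcal{R}(U^*)$ by unitarity of $W$. Thus the reverse order law applies and $(UW)^\dagger=W^\dagger U^\dagger=W^*U^\dagger$ (this may also be checked directly against the four defining equations using $WW^*=W^*W=I$). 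Consequently
$$
(UW)^\dagger (VW)=W^*U^\dagger V W,
$$
so $(UW)^\dagger VW$ is a unitary conjugate of $U^\dagger V$.

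To finish, I would invoke that similar operators share the same spectrum, hence the same spectral radius, giving $\rho((UW)^\dagger VW)=\rho(W^*U^\dagger V W)=\rho(U^\dagger V)$. By Corollary \ref{corrho} the proper splitting $S=UW-VW$ converges iff $\rho((UW)^\dagger VW)<1$ and the proper splitting $T=U-V$ converges iff $\rho(U^\dagger V)<1$, and the displayed equality of spectral radii makes these two conditions identical, which is exactly the asserted equivalence. The only genuinely non-routine step is the identity $(UW)^\dagger=W^*U^\dagger$: it is where unitarity of $W$ (rather than mere invertibility) is essential, since for a general invertible $W$ the range condition $\mathcal{R}(WW^*U^*)\subseteq\mathcal{R}(U^*)$ can fail, the reverse order law need not hold, and $(UW)^\dagger VW$ would then not be a conjugate of $U^\dagger V$, so the spectral radii could differ.
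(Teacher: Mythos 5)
Your proposal is correct and follows essentially the same route as the paper: the same range/nullspace verification, the same application of Proposition \ref{dagger de AB con rg iguales} with the two inclusions $\mathcal{R}(U^*UW)\subseteq\mathcal{R}(W)=\mathcal{H}$ and $\mathcal{R}(WW^*U^*)=\mathcal{R}(U^*)$ to get $(UW)^\dagger=W^*U^\dagger$, and the same conclusion that $(UW)^\dagger VW=W^*U^\dagger VW$ has the same spectral radius as $U^\dagger V$. Your added remarks on why unitarity (rather than mere invertibility) of $W$ is essential and the explicit appeal to Corollary \ref{corrho} are sound elaborations of what the paper leaves implicit.
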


\begin{proof}
Let us see that $S=UW-VW$ is a proper splitting of $S$. Note that $\mathcal{R}(UW)=\mathcal{R}(U)=\mathcal{R}(T)=\mathcal{R}(S)$. In addition, $\mathcal{N}(UW)=\mathcal{N}(TW)=\mathcal{N}(S)$. Indeed, $UWx=0$ if and only if $Wx\in\mathcal{N}(U)=\mathcal{N}(T)$ if and only if $TWx=0$. Therefore $S=UW-VW$ is a proper splitting of $S$. Now since $\mathcal{R}(U^*UW)\subseteq\mathcal{R}(W)=\mathcal{H}$ and $\mathcal{R}(WW^*U^*)=\mathcal{R}(U^*)$ then, by Proposition \ref{dagger de AB con rg iguales}  $(UW)^\dagger=W^*U^\dagger$.  Finally since $(UW)^\dagger VW=W^*U^\dagger V W$ then $\rho((UW)^\dagger VW)=\rho(U^\dagger V)$. Then the assertion follows.
\end{proof}
 
\begin{cor} \label{coro splitting inducido}
Let $S,T\in\mathcal{L}(\mathcal{H})$ such that $S=TW$, for some $W\in\mathcal{U}$.  Then the following assertions hold:
\begin{enumerate}
\item If $T=U_T-V$ is the polar proper splitting of $T$ then the induced proper splitting of $S$, $S=U_TW-VW$ is the polar proper splitting of $S$.
\item If $S,T\in\mathcal{L}^h$ and $T=T^\dagger -V$ is the MP-proper splitting of $T$ then the induced proper splitting of $S$, $S=T^\dagger W-VW$ is the MP-proper splitting of $S$.
\end{enumerate}
\end{cor}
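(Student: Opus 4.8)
The plan is to invoke Proposition \ref{splitting inducido} to guarantee that in each case $S=UW-VW$ is already a \emph{proper} splitting of $S$, so that only two verifications remain: first, that the leading factor $UW$ has the shape required to qualify as the canonical factor of the claimed splitting of $S$ (the polar partial isometry $U_S$ in item 1, the Moore--Penrose inverse $S^\dagger$ in item 2); and second, that the corresponding $V$-term matches, which will follow automatically once the leading factor is identified since $VW=UW-S$. I would treat the two items in parallel, each reduced to a single identification of the leading factor.

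For item 1, I would start from the polar decomposition $T=U_T|T|$ with $\mathcal{N}(U_T)=\mathcal{N}(T)$, and show that $S=TW=U_T|T|W$ exhibits $U_TW$ as the polar partial isometry of $S$. Since $W\in\mathcal{U}$ is unitary, I would compute $|S|=(S^*S)^{1/2}=(W^*T^*TW)^{1/2}=W^*(T^*T)^{1/2}W=W^*|T|W$, using that $W^*(\cdot)W$ commutes with taking the positive square root of a positive operator. Then $S=U_T|T|W=(U_TW)(W^*|T|W)=(U_TW)|S|$, so it suffices to check that $U_TW$ is a partial isometry with $\mathcal{N}(U_TW)=\mathcal{N}(S)$; the partial-isometry property is inherited because $W$ is unitary and $U_T$ is a partial isometry with initial space $\overline{\mathcal{R}(|T|)}$, and the kernel condition follows from $\mathcal{N}(U_TW)=W^*\mathcal{N}(U_T)=W^*\mathcal{N}(T)=\mathcal{N}(TW)=\mathcal{N}(S)$. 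By uniqueness of the polar decomposition this forces $U_S=U_TW$, whence $S=U_SW'$ is exactly the polar proper splitting with $V$-term $(U_TW)-S=VW$.

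For item 2, with $S,T\in\mathcal{L}^h$ and $T=T^\dagger-V$ the MP-proper splitting, I would identify $(TW)^\dagger=W^*T^\dagger$. This is immediate from Proposition \ref{splitting inducido}'s computation (or directly from the range conditions of Proposition \ref{dagger de AB con rg iguales}, exactly as in the proof just given for the unitary case): since $W$ is unitary, $\mathcal{R}(T^*TW)\subseteq\mathcal{R}(W)=\mathcal{H}$ and $\mathcal{R}(WW^*T^*)=\mathcal{R}(T^*)$, so the reverse-order law gives $S^\dagger=(TW)^\dagger=W^*T^\dagger$. Hence the leading factor of $S=T^\dagger W-VW$ equals exactly $S^\dagger=(W^*T^\dagger)^{\dagger\dagger}$? — here I must be careful: the claimed leading factor is $T^\dagger W$, and I would verify $T^\dagger W=S^\dagger$ only after noting $S=TW$ is Hermitian, so $S^\dagger$ is the canonical MP factor, and the splitting $S=S^\dagger-(S^\dagger-S)$ has $V$-term $S^\dagger-S=T^\dagger W-TW=(T^\dagger-T)W=VW$. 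The one subtlety I expect to watch is consistency of the Hermiticity hypothesis with $S=TW$: I would confirm that $S=S^*$ together with $T=T^*$ is compatible (this is what makes the MP-proper splitting of $S$ well-defined), and then the matching of $V$-terms is purely formal.

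\textbf{Anticipated main obstacle.} The only nontrivial analytic point is the square-root identity $(W^*AW)^{1/2}=W^*A^{1/2}W$ for $A\in\mathcal{L}^+$ and $W$ unitary, which underlies both the computation $|S|=W^*|T|W$ in item 1 and the clean identification of the polar factor. This follows from uniqueness of the positive square root together with $W^*AW\ge 0$ and $(W^*A^{1/2}W)^2=W^*AW$, but it is the step where the unitarity of $W$ is genuinely used rather than merely invertibility — and it is precisely what would fail for a general $G\in\mathcal{G}$, explaining why the preceding proposition (with $G$ invertible) could only induce a proper splitting and not the \emph{polar} or \emph{MP} splitting.
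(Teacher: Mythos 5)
Your proposal is correct and follows essentially the same route as the paper: both arguments reduce, via Proposition \ref{splitting inducido}, to the identifications $U_S=U_TW$ and $S^\dagger=T^\dagger W$, invoking the reverse order law of Proposition \ref{dagger de AB con rg iguales} for the Moore--Penrose case (and your verification of $U_S=U_TW$ through $|S|=W^*|T|W$ and $S=(U_TW)|S|$ is in fact a more complete justification than the paper's check of range, kernel and the partial-isometry property alone). The one step you flag but should state explicitly is the bridge from your computation $S^\dagger=(TW)^\dagger=W^*T^\dagger$ to the claimed leading factor $T^\dagger W$: since $S$ and $T$ are Hermitian, so are $S^\dagger$ and $T^\dagger$, hence $S^\dagger=(S^\dagger)^*=(W^*T^\dagger)^*=T^\dagger W$ --- the paper avoids this by writing $S=S^*=W^*T$ and applying the reverse order law to $(W^*T)^\dagger$ directly.
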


\begin{proof}

1. By Proposition \ref{splitting inducido} it is sufficient to note that $U_TW=U_S$. If fact, $\mathcal{R}(U_TW)=\mathcal{R}(S)$, $\mathcal{N}(U_TW)=\mathcal{N}(S)$ and $U_TWW^*U_T^*=U_T^*U_T=P_T=P_S$. So that, $U_TW=U_S$.

2. By Proposition \ref{splitting inducido} it is sufficient to note that $T^\dagger W=S^\dagger$. Since $S=S^*$, by Proposition \ref{dagger de AB con rg iguales} it holds that $S^\dagger=(W^*T)^\dagger=T^\dagger W$. So, the assertion follows.
\end{proof}

\begin{cor}
Let $S,T\in\mathcal{L}(\mathcal{H})$ such that $S=TW$, for some $W\in\mathcal{U}$.  Then the following assertions hold:
\begin{enumerate}
\item  The polar proper splitting of $T$ converges if and only if the polar proper splitting of $S$ converges.
\item  If $S,T\in\mathcal{L}^h$ then the MP-proper splitting of $T$ converges if and only if the MP-proper splitting of $S$ converges.
\end{enumerate}
\end{cor}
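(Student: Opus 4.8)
The plan is to obtain both equivalences as direct consequences of Proposition \ref{splitting inducido} together with Corollary \ref{coro splitting inducido}, since all the substantive work has already been carried out there. The guiding principle is twofold: an induced splitting of $S$ built from a proper splitting of $T$ via a unitary $W$ shares the same convergence behaviour, and the specific splittings named in the statement (polar, MP) are in fact the induced ones.

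For item 1, I would begin with the polar proper splitting $T=U_T-V$ of $T$. Proposition \ref{splitting inducido} applies because $W\in\mathcal{U}$, and it yields that $S=U_TW-VW$ is a proper splitting of $S$ whose convergence is equivalent to that of $T=U_T-V$, the key computation being $\rho((U_TW)^\dagger VW)=\rho(U_T^\dagger V)$. It then remains only to recognize $S=U_TW-VW$ as the polar proper splitting of $S$; this is exactly Corollary \ref{coro splitting inducido}(1), where the identity $U_TW=U_S$ is established. Chaining these two facts gives the desired equivalence.

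For item 2, under the extra hypothesis $S,T\in\mathcal{L}^h$, I would run the identical argument on the MP-proper splitting $T=T^\dagger-V$. Proposition \ref{splitting inducido} again gives that $S=T^\dagger W-VW$ is a proper splitting of $S$ with convergence equivalent to that of $T=T^\dagger-V$, and Corollary \ref{coro splitting inducido}(2) identifies $S=T^\dagger W-VW$ as the MP-proper splitting of $S$ (there $T^\dagger W=S^\dagger$, obtained via the reverse order law of Proposition \ref{dagger de AB con rg iguales}). Combining these yields the second equivalence.

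There is essentially no technical obstacle here: the corollary is a repackaging of results already proved. The only point requiring care is that the induced splitting produced by Proposition \ref{splitting inducido} coincides with the \emph{canonical} polar (resp.\ MP) splitting of $S$, rather than merely being some proper splitting of $S$ with equivalent convergence; but this identification is precisely the content of Corollary \ref{coro splitting inducido}, so nothing new is needed and the two items follow at once.
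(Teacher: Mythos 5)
Your proposal is correct and follows exactly the same route as the paper, which likewise deduces both items directly from Proposition \ref{splitting inducido} and Corollary \ref{coro splitting inducido}. You simply spell out the details that the paper leaves implicit; nothing is missing.
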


\begin{proof} 
It follows from Propositions \ref{splitting inducido} and Corollary \ref{coro splitting inducido}.
\end{proof}

The last result shows that the polar proper splitting of  all operators in the unitary orbit of an  $T\in\mathcal{L}(\mathcal{H})$ have the same behavior with respect to the convergence. Moreover, the speed of convergence of  these  polar proper splittings coincides. 

\begin{pro}
Consider $S,T\in\mathcal{L}(\mathcal H)$ such that $S=UTU^*$, where $U\in\mathcal{U}$. Then, the polar proper splitting of $S$ converges if and only if the polar proper splitting of $T$ converges. Moreover, if $S=U_S-W$ and $T=U_T-V$ are the polar proper splittings of $S$ and $T$, respectively,  then $\rho(U_S^*W)=\rho(U_T^*V)$.
\end{pro}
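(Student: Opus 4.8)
The plan is to exploit the fact that unitary conjugation $S=UTU^*$ preserves all the relevant operator-theoretic structure, and in particular commutes with the operations appearing in the polar decomposition. First I would observe that $S^*S = UT^*U^*UTU^* = UT^*TU^*$, so $|S|=(S^*S)^{1/2}=U|T|U^*$, using that $U|T|U^*\ge 0$ and $(U|T|U^*)^2 = U|T|^2U^* = S^*S$, together with uniqueness of the positive square root. Likewise $\mathcal{R}(S)=U\mathcal{R}(T)$ and $\mathcal{N}(S)=U\mathcal{N}(T)$, so $S$ has closed range precisely when $T$ does, and $P_{S^*}=UP_{T^*}U^*$. The key identity is then that the partial isometry of the polar decomposition transforms as $U_S=UU_TU^*$: indeed $UU_TU^*\cdot U|T|U^*=UU_T|T|U^*=UTU^*=S$, and since $\mathcal{N}(UU_TU^*)=U\mathcal{N}(U_T)=U\mathcal{N}(T)=\mathcal{N}(S)$, the uniqueness clause of the polar decomposition forces $U_S=UU_TU^*$.

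Next I would identify the error operators. From $S=U_S-W$ and $T=U_T-V$ we have $W=U_S-S$ and $V=U_T-T$, so $W=UU_TU^*-UTU^*=U(U_T-T)U^*=UVU^*$. Combining this with $U_S^*=UU_T^*U^*$ gives
\begin{equation}
U_S^*W=(UU_T^*U^*)(UVU^*)=UU_T^*VU^*,\nonumber
\end{equation}
so $U_S^*W$ is unitarily equivalent to $U_T^*V$. Since unitary equivalence preserves the spectrum, $\sigma(U_S^*W)=\sigma(U_T^*V)$, and hence $\rho(U_S^*W)=\rho(U_T^*V)$. This immediately yields the quantitative claim about equal speed of convergence.

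For the convergence equivalence, I would invoke Theorem \ref{convergencia del polar}: the polar proper splitting of $T$ converges if and only if $\|T\|<2$. Since unitary conjugation is isometric, $\|S\|=\|UTU^*\|=\|T\|$, so $\|S\|<2$ if and only if $\|T\|<2$, and the two splittings converge simultaneously. Alternatively, one can read off the convergence equivalence directly from $\rho(U_S^*W)=\rho(U_T^*V)$ together with the characterization that convergence is equivalent to this spectral radius being strictly less than $1$ (recall $U_T^*V=P_{T^*}-|T|$ is Hermitian, so $\rho(U_T^*V)=\|U_T^*V\|$, as used in Theorem \ref{convergencia del polar}).

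I do not expect any serious obstacle here; the proof is essentially a bookkeeping exercise in pushing the unitary $U$ through each ingredient of the polar decomposition. The one point requiring a little care is justifying $|S|=U|T|U^*$ and $U_S=UU_TU^*$ rigorously via the uniqueness of the polar decomposition (guaranteed by the condition $\mathcal{N}(U_S)=\mathcal{N}(S)$ recalled in the Preliminaries), rather than by a formal manipulation; once those two identities are established, everything else follows mechanically.
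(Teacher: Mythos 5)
Your proof is correct and follows essentially the same route as the paper: both push the unitary through the polar decomposition to get $|S|=U|T|U^*$, $U_S=UU_TU^*$ and $W=UVU^*$, and then conclude $\rho(U_S^*W)=\rho(U_T^*V)$ (the paper via Hermitian-ness of $U_T^*V$ and unitary invariance of the norm, you via invariance of the spectrum under unitary equivalence, and the paper identifies $U_S$ through the formula $U_S=S|S|^\dagger$ rather than the uniqueness clause of the polar decomposition). These are only cosmetic differences, so nothing further is needed.
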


\begin{proof}
Since $S=UTU^*$ then $U^*SU=T=U^*U_SU-U^*WU$. Let us see that $U^*U_SU=U_T$. In fact, $|S|=U|T|U^*$ then $|S|^\dagger=U|T|^\dagger U^*$. Now, $U_S=S|S|^\dagger=UTU^*U|T|^\dagger U^*=UT|T|^\dagger U^*=UU_TU^*$. So that $U^*U_SU=U_T$ and then $U^*WU=V$.  Therefore, $\rho(U^*_TV)=\|U^*_TV\|=\|U^*U^*_SUV\|=\|UU^*U^*_SUVU^*\|=\|U^*_SUVU^*\|=\|U^*_SW\|=\rho(U_S^*W)$.
\end{proof}


\begin{thebibliography}{10}

\bibitem{AntCanSto2010}
J. Antezana, C. Cano, I. Mosconi, D.~Stojanoff; \emph{A note on the star
  order in {H}ilbert spaces}, Linear Multilinear Algebra \textbf{58} (2010),
  no.~7-8, 1037--1051.

  \bibitem{arias2008generalized}
M.L.~Arias, G.~Corach, M.C.~Gonzalez; \emph{Generalized inverses and Douglas
  equations}, Proc. Amer. Math. Soc. \textbf{136}
  (2008), no.~9, 3177--3183.

\bibitem{AG-PA} M.L. Arias, G. Corach, M.C. Gonzalez; \emph{Products of projections and positive operators}, Linear Algebra Appl. \textbf{439} (2013), 1730-1741.

 \bibitem{AG-PB} M.L. Arias,  M.C. Gonzalez; \emph{Products of projections and self-adjoint operators}, Linear Algebra Appl. \textbf{555} (2018), 70-83.

  \bibitem{AG-splitting} M.L. Arias, M.C. Gonzalez; \emph{Proper splittings and reduced solutions of matrix equations}, J. Math. Anal. Appl. \textbf{505} (2022), no 1, doi.org/10.1016/j.jmaa.2021.125588 .
  
  \bibitem{MR1032688}
J.K. Baksalary, E.P. Liski, G. Trenkler; \emph{Mean square
  error matrix improvements and admissibility of linear estimators}, J.
  Statist. Plann. Inference \textbf{23} (1989), no.~3, 313--325. 
  
  \bibitem{MR348984}
A. Berman,  R.J. Plemmons;  \emph{Cones and iterative methods for
  best least squares solutions of linear systems}, SIAM J. Numer. Anal.
  \textbf{11} (1974), 145--154. 
  
  \bibitem{MR317088}
R. Bouldin;  \emph{The pseudo-inverse of a product}, SIAM J. Appl. Math.
  \textbf{24} (1973), 489--495.
  
   \bibitem{Chium2021Onaconjeture} E. Chiumiento; \emph{On a conjecture by Mbekhta about best approximation by polar factors}, Proc. Amer. Math. Soc. \textbf{149} (09) (2021): 3913--3922.
   
   \bibitem{MR1628383}
J.J. Climent, C. Perea; \emph{Some comparison theorems for weak
  nonnegative splittings of bounded operators}, Proceedings of the {S}ixth
  {C}onference of the {I}nternational {L}inear {A}lgebra {S}ociety ({C}hemnitz,
  1996) \textbf{275/276} (1998), 77--106. 
   
   \bibitem{MR1695402}
J.J. Climent, C. Perea; \emph{Convergence and comparison theorems for multisplittings},
  \textbf{6} (1999), Czech-US Workshop in Iterative Methods and Parallel Computing,
  Part 2 (Milovy, 1997), 93--107. 
  
  \bibitem{MR2653816}
G. Corach, A. Maestripieri; \emph{Polar decomposition of oblique
  projections}, Linear Algebra Appl. \textbf{433} (2010), no.~3, 511--519.
  
  \bibitem{CM} G. Corach, A. Maestripieri; \emph{Products of orthogonal projections and polar decompositions}, 
Linear Algebra Appl. \textbf{434} (2011), 1594-1609.
   
   \bibitem{MR745083}
G. Csordas,  R.S. Varga; \emph{Comparisons of regular splittings of
  matrices}, Numer. Math. \textbf{44} (1984), no.~1, 23--35. 
  
 
 \bibitem{Deu} F. Deutsch; \emph{The angles between subspaces in Hilbert space}, in: S.P. Singh (Ed.), Approximation Theory, Wavelets and Applications, Kluwer, Netherlands
(1995), 107-130.
  
  \bibitem{MR0203464}
  R.G. Douglas; \emph{On majorization, factorization, and range inclusion of
  	operators on {H}ilbert space}, Proc. Amer. Math. Soc. \textbf{17} (1966),
  413--415. 
  
  \bibitem{Dji2017Theminusordes}
  M.S.~Djiki\'{c}, G.~Fongi,  A.~Maestripieri; \emph{The minus order and range additivity}, Linear Algebra Appl. \textbf{531} (2017),
  234--256. 
  
  
  \bibitem{2001DjorStani} D.S. Djordjevic, P.S. Stanimirovic; \emph{Splittings of operators and generalized inverses}, Publ. Math. Debrecen \textbf{59} 1-2 (2001), 147--159.
 
\bibitem{Efimov}
   M.A. Efimov; \emph{On the $\overset{\sharp}\leq$-order on the set of linear bounded operators in Banach Space}, Math. Notes no \textbf{93}(5) (2013), 784-788.
  
  \bibitem{MR1969059}
L. Elsner, A. Frommer, R. Nabben, H. Schneider, D.B.
  Szyld;  \emph{Conditions for strict inequality in comparisons of spectral
  radii of splittings of different matrices}, \textbf{363}, 2003, Special issue on
  nonnegative matrices, $M$-matrices and their generalizations (Oberwolfach,
  2000), ~65--80. 
  
  
 \bibitem{fongi2023moore}
 G. Fongi, M.C. Gonzalez; \emph{Moore-Penrose inverse and partial orders on Hilbert space operators}, Linear Algebra Appl. \textbf{674} (2023), 1-20.

\bibitem{FPT}
M. Frank, V. Paulsen, T. Tiballi; \emph{Symmetric approximation of frames and bases in Hilbert spaces}, Trans. Am. Math. Soc. \textbf{354} (2002), 777-793.

\bibitem{greville1966note}
T.N.E. Greville;  \emph{Note on the generalized inverse of a matrix
  product}, SIAM  Rev.  \textbf{8} (1966), no.~4, 518--521.
  
  \bibitem{Gro}
  C.W. Groetsch; \emph{Generalized inverses of linear operators, Representation and Approximation}, Pure and Applied Mathematics \textbf{37},  Dekker, New York (1977).
  
  \bibitem{MR0075677}
A.S. Householder;  \emph{On the convergence of matrix iterations}, Rep.
  ORNL 1883, Oak Ridge National Laboratory, Oak Ridge, Tenn., 1955.

\bibitem{MR651705}
S. Izumino;  \emph{The product of operators with closed range and an
  extension of the reverse order law}, Tohoku Math. J. (2) \textbf{34} (1982),
  no.~1, 43--52. 

\bibitem{John}
F.~John; \emph{Advanced numerical analysis}, Lecture Notes, Dept. of
  Mathematics, New York Univ, 1956.

 \bibitem{jose2015partial}
 S.~Jose,  K.C. Sivakumar; \emph{On partial orders of hilbert space operators}, Linear Multilinear Algebra \textbf{63} (2015), no.~7, 1423--1441.
  
  \bibitem{MR3671533}
M.R. Kannan;  \emph{P-proper splittings}, Aequationes Math. \textbf{91}
  (2017), no.~4, 619--633. 
  
 \bibitem{LiuHuang}
   X. Liu, S. Huang;  \emph{Proper splitting for the generalized inverse $A^{(2)}_{T,S}$ and its application on Banach spaces}, Abstr. Appl. Anal. (2012),  Article ID 736929,  9 pp.
  
  \bibitem{MR3141710}
D. Mishra;  \emph{Nonnegative splittings for rectangular matrices},
  Comput. Math. Appl. \textbf{67} (2014), no.~1, 136--144. 
  
 \bibitem{2009Nacevska} B. Nacevska;  \emph{Iterative methods for computing generalized inverses and splittings of operators},  Appl. Math. Comp. \textbf{208} 1 (2009), 186--188.
  
    
  \bibitem{Robert}
  P. Robert;  \emph{On the group-inverse of a linear transformation}, J. Math. Anal. Appl. \textbf{22} (1968), 658--669.
  
  
   \bibitem{Seb} Z. Sebesty\'en; \emph{Restrictions of positive operators}, Acta Sci.Math. \textbf{46} (1983), 299-301.
   
  \bibitem{MR1113154}
Y. Song; \emph{Comparisons of nonnegative splittings of matrices},
  Linear Algebra Appl. \textbf{154/156} (1991), 433--455. 
  
  \bibitem{MR1930390}
Y. Song;  \emph{Comparison theorems for splittings of matrices}, Numer.
  Math. \textbf{92} (2002), no.~3, 563--591.
  
  
   \bibitem{varga}
R.S. Varga;  \emph{Matrix iterative analysis}, Prentice-Hall, Inc.,
  Englewood Cliffs, N.J., 1962.
  
  \bibitem{MR1286436}
Z.I. Wo\'{z}nicki;  \emph{Nonnegative splitting theory}, Japan J. Indust.
  Appl. Math. \textbf{11} (1994), no.~2, 289--342. 
\end{thebibliography}
\end{document}